\newtheorem{proposition}{Proposition}
\newtheorem{theorem}{Theorem}
\newtheorem{remark}{Remark}
\newtheorem{lemma}{Lemma}
\newtheorem{corollary}{Corollary}
\newtheorem{definition}{Definition}
\newcommand{\dist}{\mathrm{dist}}
\newcommand{\II}{\mathrm{II}}
\newcommand{\ra}{\rightarrow}
\newcommand{\di}{\mathrm{d}}
\newcommand{\disp}{\displaystyle}
\newcommand{\cn}{\mathrm{cn}}
\newcommand{\sn}{\mathrm{sn}}
\newcommand{\eps}{\varepsilon}
\newcommand{\R}{\mathbb{R}}
\newcommand{\Ric}{\mathrm{Ric}}
\newcommand{\Sect}{\mathrm{Sect}}
\newcommand{\Tr}{\mathrm{Tr}}
\newcommand{\mcal}{\mathcal}
\newcommand{\metric}{\langle \, , \, \rangle}
\newcommand{\HH}{\mathbb{H}}
\newcommand{\cut}{\mathrm{cut}}
\newcommand{\lip}{\mathrm{Lip}}
\newcommand{\loc}{\mathrm{loc}}
\newcommand{\PP}{\mathcal{P}}
\newcommand{\measrest}{%
  \,\raisebox{-.127ex}{\reflectbox{\rotatebox[origin=br]{-90}{$\lnot$}}}\,%
}
\begin{document}

\title{A barrier principle at infinity for varifolds with bounded mean curvature}
\author{J. H. Lira\thanks{Partially supported by CNPq/FUNCAP/PRONEX Grant $\#$ PR2-0054-00009.01.00/11 } \and A. A. Medeiros\thanks{Partially supported by CNPq} \and L. Mari \and E.S. Gama}
\maketitle

\begin{abstract}
Our work investigates varifolds $\Sigma \subset M$ in a Riemannian manifold, with arbitrary codimension and bounded mean curvature, contained in an open domain $\Omega$. Under mild assumptions on the curvatures of $M$ and on $\partial \Omega$, also allowing for certain singularities of $\partial \Omega$, we prove a barrier principle at infinity, namely we show that the distance of $\Sigma$ to $\partial \Omega$ is attained on $\partial \Sigma$. Our theorem is a consequence of sharp maximum principles at infinity on varifolds, of independent interest.\footnote{{\bf Keywords:} maximum principle at infinity, barrier principle, minimal submanifolds, varifolds.\\ 
{\bf 2010 MSC:} Primary 53C42, 53C21. Secondary 53C40, 58J65.
}
\end{abstract}

\tableofcontents

\section{Introduction}

The classical barrier principle, also called tangency principle, states that a connected, minimal hypersurface $\Sigma^{m-1} \ra M^m$, with image inside a mean convex set $\Omega$, cannot touch $\partial \Omega$ unless $\Sigma \subset \partial \Omega$ (for a short proof, see \cite{Esch_1}). Similarly, and because of the unique continuation principle, two connected minimal hypersurfaces $\Sigma_1,\Sigma_2$ that touch at a common point $p$, in a way that $\Sigma_1$ locally lies on one side of $\Sigma_2$ near $p$, must coincide. To the best of our knowledge, extensions of the barrier principle to higher codimensional submanifolds $\Sigma^\ell \ra M^m$ were first obtained in \cite{JT} by L.P. Jorge and F. Tomi: if a connected $\ell$-dimensional minimal submanifold $\Sigma^\ell$ lies in a subset $\Omega \subset M$ whose boundary is $\ell$-mean convex, then $\Sigma$ is disjont from $\partial \Omega$ or it is contained in $\partial \Omega$. Here, $\ell$-mean convexity means that the sum of the smallest $\ell$ principal curvatures of the second fundamental form $\II_{\partial \Omega}$ in the inward direction is non-negative, equivalently, that the trace of $\II_{\partial \Omega}$ on $\ell$-dimensional subspaces is non-negative. One of the typical examples of barrier in Euclidean space $\mathbb{R}^{m}=\mathbb{R}^{m-s}\times\mathbb{R}^s$ is the cone $u^{-1}(0)$, where 
\begin{equation}\label{def_Phi_DS}
u : = \varrho^2_{\mathbb{R}^{m-s}} - c \varrho^2_{\mathbb{R}^s},
\end{equation}
$c$ is a positive constant and $\varrho_{\mathbb{R}^ {m-s}}$ and $\varrho_{\mathbb{R}^ {s}}$ are distance functions relative to fixed reference points in $\mathbb{R}^{m-s}$ and $\mathbb{R}^s$, respectively. If $s < \ell$ and $c \le \frac{\ell-s}{s}$, then the boundary of $\Omega = u^{-1}((-\infty, 0))$ is $\ell$-mean convex in the inward direction. As a matter of fact, under the same bound the restriction of $u$ to a minimal submanifold $\Sigma^\ell \ra \Omega$ is subharmonic; therefore, using the classical maximum principle, U. Dierkes and D. Schwab \cite{Dierkes,DS} proved some enclosure as well as nonexistence theorems, both for minimal and for submanifolds with bounded mean curvature.\par
The need to establish a barrier principle for possibly nonsmooth submanifolds has stimulated various interesting works in recent years, and by now a sharp theorem in full generality (i.e. allowing singularities for both $\Sigma$ and $\partial \Omega$) is only available for codimension $1$, singular minimal (stationary) hypersurfaces, cf. \cite{wick}. Indeed, the presence of singularities makes the problem quite more delicate, in view of the possibility that the two hypersurfaces touch at a point that is singular for both. However, when $\partial \Omega$ is smooth more is known, even for higher codimensional submanifolds $\Sigma$. We list the main achievements.
\begin{itemize}
\item[-] If $\partial \Omega$ is smooth, then 
\begin{itemize}
\item[-]  B. Solomon and B. White \cite{solowhi} proved the barrier principle when $\Sigma$ is an $(m-1)$-dimensional varifold that is stationary for an even, parametric elliptic functional $F$, including the area one, under the appropriately defined mean convexity of $\partial \Omega$ with respect to $F$; 
\item[-] for the area functional, the result in \cite{solowhi} was later extended to $\ell$-dimensional varifolds $\Sigma$ by B. White in \cite{White}, see also 
 \cite[Thms 7.1 and 7.3]{white_2}; 
\item[-] in a different direction, \cite{solowhi} has also been extended to $(m-1)$-varifolds with mean curvature ${\bf H} \in L^p_\loc$ with $p>m-1$, by R. Sch\"atzle in \cite{schatzle};
\item[-] recent results, for branched surfaces with bounded mean curvature and for energy stationary currents, can be found in P. Henkemeyer's \cite{henke,henke2}.
\end{itemize}
\item[-] If both $\partial \Omega$ and $\Sigma$ are possibly singular, the barrier principle has been proved:
\begin{itemize}
\item[-] by M. Moschen \cite{moschen}, when $\partial \Omega$ and $\Sigma$ are oriented, area minimizing boundaries; 
\item[-] by L. Simon \cite{simon}, when $\partial \Omega$ and $\Sigma$ are integer area minimizing $(m-1)$-currents;
\item[-] by T. Ilmanen \cite{ilmanen}, when $\partial \Omega$ and $\Sigma$ are stationary, integral $(m-1)$-varifolds whose singular set has locally finite $(m-2)$-dimensional measure, and eventually 
\item[-] by N. Wickramasekera \cite{wick}, when $\partial \Omega$ and $\Sigma$ are stationary, integral $(m-1)$-varifolds and $\mathscr{H}^{m-1}(\mathrm{sing} \, \partial \Omega) = 0$. He also showed that the result is best possible.
\end{itemize}

\end{itemize}

The main goal of the present paper is to establish a barrier principle \emph{at infinity} for varifolds of arbitrary codimension inside $\Omega$. Namely, if $\Sigma$ is a $\ell$-dimensional varifold (not necessary rectifiable) with support inside $\Omega$, under suitable mean curvature conditions on $\Sigma$ related to those of $\partial \Omega$, and under suitable curvature bounds on the underlying manifold $M$ (now necessary,  since the problem is not local any more), we aim to prove that
	\[
	\mathrm{dist}\big( {\rm spt } \|\Sigma\|, \partial \Omega\big) = \mathrm{dist}\big( {\rm spt } \|\partial \Sigma\|, \partial \Omega\big).
	\]
Results of this type have been investigated in recent years for minimal surfaces in $\R^3$, see the works of W.H. Meeks and H. Rosenberg \cite{meeksrosenberg} and A. Alarc\'on, B. Drinovec Drnov\u{s}ek, F. Forstner\u{i}c, and F.J. L\'opez \cite{adfl}, that will be compared to our main result in due course in the paper. Our theorem is also tightly related to the general half-space theorem proved by L. Mazet in \cite{mazet}. To accomplish our goal, we shall guarantee the validity of suitable forms at infinity of the maximum principle. Natural conditions to be put on $\Sigma$ are of potential-theoretic and stochastic nature, precisely we need some suitable form of parabolicity, stochastic completeness or of the validity of Omori-Yau principles on $\Sigma$, see \cite{amr,PRS1} for a detailed account. While the full Omori-Yau property, to present, requires a structure on the underlying manifold $\Sigma$ that is richer than merely being a $\ell$-dimensional varifold with mean curvature in $L^\infty$ (cf. \cite{maripessoa}), the refined integral estimates developed in \cite{prs_gafa,PRS1} by S. Pigola, M. Rigoli and A.G. Setti to ensure the weak maximum principle at infinity (equivalent, in the smooth setting, to the stochastic completeness of $\Sigma$) are very well suited to be adapted to the varifold setting.
%
%
Hereafter, given a $2$-covariant tensor $A$ with eigenvalues $\lambda_1(A) \le \lambda_2(A) \le \ldots \le \lambda_m(A)$, and $\ell \in \{1, \ldots, m\}$, we set
$$
\PP_\ell^-[A] : = \frac{\lambda_1(A) + \ldots + \lambda_\ell(A)}{\ell}, 
$$
Our main analytical results, Theorems \ref{MPV} and \ref{MPV_para} below, are inspired by \cite{prs_gafa,PRS1} and are tightly related to a recent work of B. White \cite{white_2}. For $0 \le h \in C(M)$, the author in \cite{white_2} defined a $(\ell,h)$-set $A \subset M$ to be a closed subset such that the following holds: whenever $u \in C^2(M)$ is such that $u_{|A}$ has a local maximum at $x \in A$, 
	\[
	\PP_\ell^-[\nabla^2 u](x) - h|\nabla u|(x) \le 0
	\]
(note that the function $h$ we use here corresponds to $h/\ell$ in White's definition). He showed that $\ell$-dimensional varifolds with normalized (generalized) mean curvature ${\bf H}$ satisfying $\|{\bf H}\|_\infty \le h$ are $(\ell,h)$ sets and, remarkably, this is so also for the blow-up set of sequences of such varifolds. 
%
%
%
%
We investigate versions at infinity of the above property, and we prove, respectively, a maximum principle at infinity (Theorem \ref{MPV}) and a parabolicity criterion (Theorem \ref{MPV_para}), currently restricted to the varifold setting. These results are used to establish the main barrier principle in the present paper: to state the theorem, recall that, for $\ell \in \{1,\ldots, m-1\}$, the $\ell$-th (normalized) Ricci curvature is the function
$$
v \in T_xM \quad \longmapsto \quad \Ric^{(\ell)}(v) : =  \inf_{\footnotesize{\begin{array}{c}
\mcal{W} \le v^\perp \\
\dim \mcal{W} = \ell
\end{array}}
} \left( \frac{1}{\ell} \sum_{j=1}^\ell \mathrm{Sect}(v \wedge e_j)\right), 
$$
where $\{e_j\}$ is an orthonormal basis of $\mcal W$. Hereafter, given $c \in \R$, with $\Ric^{(\ell)} \ge -c$ we shortly mean the inequality
$$
\Ric^{(\ell)}(v_x) \ge - c \qquad \forall\, x \in M, \ v_x \in T_xM.
$$
The function $\Ric^{(\ell)}$ interpolates between the sectional $(\ell=1)$ and Ricci $(\ell = m-1)$ curvatures, and with our chosen normalization the following implications are immediate:
$$
\Sect \ge -c \ \  \Longrightarrow \ \ \Ric^{(\ell-1)} \ge - c \ \ \Longrightarrow \ \ \Ric^{(\ell)} \ge - c \ \  \Longrightarrow \ \ \Ric \ge -c,
$$

%


The basic notions of varifold theory that we need are collected in Section \ref{sec_maxprinc}. We just observe that, if $B_r \subset M$ is a geodesic ball and $\Sigma$ is the varifold associated to a smooth $\ell$-dimensional submanifold of $M$, the quantity $\|\Sigma\|(B_r)$ coincides with the $\ell$-dimensional measure  of $\Sigma \cap B_r$. Also, we emphasize that the mean curvature vector ${\bf H}$ is assumed to be normalized, that is, in a smooth setting its value in a normal direction is the average of the principal curvatures and not their sum.

\begin{theorem}\label{omori-yau_vari}
Let $(M^m,\metric)$ be a complete manifold satisfying 
\begin{equation}\label{riccik_vari}
\Ric^{(\ell-1)} \ge - c, 
\end{equation}
for some $\ell \in \{2,\ldots, m-1\}$ and some $c \in \R$. Let $\Omega \subset M$ be an open set whose second fundamental form $\II_{\partial \Omega}$ in the inward direction satisfies  
\begin{equation}\label{finiteform_vari}
\PP_{\ell-1}^-[\II_{\partial \Omega}] \ge \Lambda_{\ell-1} \ \ \qquad \PP_\ell^-[\II_{\partial \Omega}] \ge \Lambda_\ell \ge 0
\end{equation}
in the barrier sense, for some constants $\Lambda_{\ell-1} \in \R$, $\Lambda_\ell \in [0, \infty)$, and that has locally bounded bending from outwards. Consider a $\ell$-dimensional varifold $\Sigma$ with connected support and a (possibly nonzero) generalized boundary, satisfying 
\begin{equation}\label{condi_volume}
\liminf_{r \ra \infty} \frac{\log \|\Sigma\|(B_r)}{r^2} < \infty,
\end{equation}
and with normalized (generalized) mean curvature $\mathbf{H} \in L^\infty(M,\|\Sigma\|)$. 
\begin{itemize}
\item[$(\mathbb{A})$] If $\|{\bf H}\|_\infty < \Lambda_\ell$ and $\Lambda_\ell^2 \ge c$, then   
\begin{equation}\label{bonita}
{\rm dist}\big({\rm spt } \|\Sigma\|, \partial \Omega\big) = \dist \big({\rm spt } \|\partial \Sigma\|, \partial \Omega \big).
\end{equation}
Moreover, $\Omega$ does not contain $\ell$-dimensional varifolds $\Sigma$ satisfying the above assumptions and with no boundary.
\item[$(\mathbb{B})$] If $\|{\bf H}\|_\infty < \Lambda_\ell$ and $\Lambda_\ell^2 < c$, then
\begin{equation}\label{bonita_bis}
{\rm dist}\big({\rm spt } \|\Sigma\|, \partial \Omega\big) \ge \min\left\{\frac{\Lambda_\ell - \|\mathbf{H}\|_\infty}{c}, \dist({\rm spt } \|\partial \Sigma\|, \partial \Omega)\right\}.
\end{equation}
\item[$(\mathbb{C})$] If $\|\mathbf{H}\|_\infty = \Lambda_\ell$, $\Lambda^2_\ell \ge c$ and \eqref{condi_volume} is replaced by
\begin{equation}\label{condi_volume_para}
\text{$\Sigma$ is rectifiable and } \qquad \int^{\infty} \frac{r \di r}{\|\Sigma\|(B_r)} = \infty, 
\end{equation}
then 
\begin{equation}\label{bonita_2}
{\rm dist}\big({\rm spt } \|\Sigma\|, \partial \Omega\big) = \dist \big({\rm spt } \|\partial \Sigma\|, \partial \Omega \big).
\end{equation}
Moreover, if $\Sigma$ has no boundary, $\Sigma$ must be contained into an equidistant hypersurface of $\partial \Omega$.
\end{itemize}
\end{theorem}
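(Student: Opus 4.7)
The overall strategy is to transform the distance function $\varrho(x) = \dist(x, \partial\Omega)$ into a bounded test function on $\Sigma$, and apply the maximum principles at infinity (Theorems \ref{MPV} and \ref{MPV_para}) to derive a contradiction in case $\mathrm{spt}\|\Sigma\|$ were to come closer to $\partial\Omega$ than $\mathrm{spt}\|\partial\Sigma\|$ does. I would first carry out a geometric comparison step. The Riccati equation governing the Weingarten operator of the equidistants $\{\varrho = t\}\subset\Omega$, together with $\Ric^{(\ell-1)}\ge -c$ and the two initial bounds on $\PP_{\ell-1}^-[\II_{\partial\Omega}]$ and $\PP_\ell^-[\II_{\partial\Omega}]$, should produce an explicit inequality
\[
\PP_\ell^-\bigl[-\nabla^2\varrho\bigr](x)\ \ge\ \psi_\ell\bigl(\varrho(x)\bigr)
\]
in the barrier sense, for some function $\psi_\ell$ depending on $\Lambda_{\ell-1}$, $\Lambda_\ell$, $c$. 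The role of the bound on $\PP_{\ell-1}^-$ is to prevent the smallest $\ell-1$ principal curvatures of the evolving equidistants from causing $\PP_\ell^-[\II]$ to decay too fast along the Riccati flow, while the locally bounded bending hypothesis guarantees the existence of smooth upper support functions for $\varrho$ at singular points of $\partial\Omega$.

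For cases $(\mathbb{A})$ and $(\mathbb{B})$, I would then build a strictly increasing $C^2$ function $\varphi$ such that $u = -\varphi(\varrho)$ is bounded above on $\Sigma$ and satisfies
\[
\PP_\ell^-[\nabla^2 u](x)\ -\ \|\mathbf{H}\|_\infty\,|\nabla u|(x)\ \ge\ \alpha\ >\ 0
\]
in the barrier sense on $\{\varrho < R_0\}\cap\mathrm{spt}\|\Sigma\|$, where $R_0$ is a putative smaller infimum of $\varrho$ on $\mathrm{spt}\|\Sigma\|$. Since on an orthonormal $\ell$-frame $\{v_i\}$ one has
\[
\sum_{i=1}^{\ell}\langle \nabla^2 u\cdot v_i, v_i\rangle\ =\ \varphi''(\varrho)\sum_{i=1}^{\ell}\langle v_i,\nabla\varrho\rangle^2\ +\ \varphi'(\varrho)\sum_{i=1}^{\ell}\langle \nabla^2\varrho\cdot v_i, v_i\rangle,
\]
the requirement reduces to a Riccati-type ODE for $\varphi$, in which the splitting $\Lambda_\ell^2\ge c$ vs.\ $\Lambda_\ell^2 < c$ dictates whether the barrier can be prolonged to all of $[0,\infty)$ or only up to a critical radius proportional to $(\Lambda_\ell-\|\mathbf{H}\|_\infty)/c$, which is precisely what the quantitative bound of $(\mathbb{B})$ records. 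Assuming for contradiction that $\inf_\Sigma\varrho < \inf_{\partial\Sigma}\varrho$, the growth hypothesis \eqref{condi_volume} lets me invoke Theorem \ref{MPV} on $u$: it yields a sequence $x_k$ with $u(x_k)\to\sup_\Sigma u$ and $\PP_\ell^-[\nabla^2 u](x_k)-\|\mathbf{H}\|_\infty|\nabla u|(x_k)\to 0$, contradicting the strict positivity $\alpha$. The no-boundary addendum of $(\mathbb{A})$ is then immediate, since \eqref{bonita} would force $\dist(\mathrm{spt}\|\Sigma\|,\partial\Omega)=+\infty$ against $\Sigma\subset\Omega$.

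In the borderline case $(\mathbb{C})$ with $\|\mathbf{H}\|_\infty=\Lambda_\ell$ strict positivity is lost, and I would instead invoke the parabolicity criterion Theorem \ref{MPV_para} under the sharper growth \eqref{condi_volume_para}. The non-strict inequality $\PP_\ell^-[\nabla^2 u]-\|\mathbf{H}\|_\infty|\nabla u|\ge 0$ on a parabolic, connected $\Sigma$ forces $u$, and hence $\varrho$, to be constant on $\mathrm{spt}\|\Sigma\|$ whenever $\partial\Sigma$ is empty, so that $\Sigma$ lies in an equidistant hypersurface of $\partial\Omega$. The main technical obstacle I expect lies in Step~1: calibrating the Riccati comparison so that the three regimes of $(\mathbb{A})$--$(\mathbb{C})$ arise from a single function $\psi_\ell$ compatible with the threshold $\Lambda_\ell^2 = c$, and making the barrier-sense comparison robust at the possibly singular locus of $\partial\Omega$ via the locally bounded bending hypothesis.
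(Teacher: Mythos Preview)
Your overall architecture matches the paper's: Hessian comparison for $\varrho=\dist(\cdot,\partial\Omega)$ via the Riccati equation (Proposition~\ref{prop_comparison}), construction of a radial barrier $u=\eta(\varrho)$ with $\PP_\ell^-[\nabla^2 u]-\|\mathbf H\|_\infty|\nabla u|\ge\bar\delta$ (Proposition~\ref{prop_constr_u}), and then an appeal to Theorem~\ref{MPV} in cases $(\mathbb A),(\mathbb B)$ and to Theorem~\ref{MPV_para} in case $(\mathbb C)$. You also correctly locate the bifurcation $\Lambda_\ell^2\gtrless c$ in the ODE analysis for $\eta$, and the role of the bound on $\PP_{\ell-1}^-$ in controlling the ``tangential'' piece of $\nabla^2 u$ when the minimizing $\ell$-plane has a component along $\nabla\varrho$.

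There is, however, a genuine missing step. Your $u=-\varphi(\varrho)$ is only Lipschitz (because of the cut locus of $\partial\Omega$), and you explicitly say the inequality holds only in the barrier sense; but Theorem~\ref{MPV} is stated for $u\in C^2(M)$, and Theorem~\ref{MPV_para} requires a locally equi-Lipschitz family $\{u_\eps\}\subset C^2$ approximating $u$ and satisfying the inequality up to an $\eps$-error. Bridging this gap is not routine: the paper spends all of Section~\ref{sec_approx} on it, adapting Greene--Wu smoothing to the sheaf of germs satisfying $\PP_\ell^-[\nabla^2 f]>\kappa$ and $|\nabla f|<\beta$, and then proving (Proposition~\ref{u-good}) that the barrier-sense solution $u$ actually lies in this sheaf so that it admits a smooth approximation $\bar u$ with the same strict inequality. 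This last point is where the locally bounded bending hypothesis is really used: not to produce the barrier-sense comparison, but to give a \emph{uniform} two-sided bound on $\nabla^2 r_y$ for the supporting hypersurfaces (Lemma~\ref{lem_compactness}), which is what lets one upgrade barrier-sense to the Greene--Wu sense needed for smoothing. Your proposal misattributes the role of locally bounded bending to Step~1 and does not account for this approximation layer; without it, you cannot legitimately feed $u$ into Theorems~\ref{MPV} or~\ref{MPV_para}.

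A minor remark: Theorem~\ref{MPV} does not produce an Omori--Yau type sequence $x_k$ as you describe; it directly bounds the $\|V\|$-essential infimum of $\PP_\ell^-[\nabla^2 u]-h|\nabla u|$ on $\Omega_\gamma$ by a constant that vanishes when $\sigma=0$, which immediately contradicts the strict lower bound $\bar\delta/2$.
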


\begin{remark}[\textbf{Regularity}]
\emph{For the meaning of \eqref{finiteform_vari} in the barrier sense, see Definition \ref{def_barrier} below. Note that, in particular, $\partial \Omega$ is not required to possess a regular neighbourhood of uniform size where the normal exponential map is a diffeomorphism. Also, our assumptions on $M, \partial \Omega$ allow for possible generalization to metric spaces with suitable weak notions of $(\ell-1)$-th Ricci curvature, for instance the one recently considered in \cite{kettemondino} via optimal transport. 
}
\end{remark}

\begin{remark}[\textbf{On condition $\PP_{\ell-1}^-[\II_{\partial \Omega}] \ge \Lambda_{\ell-1}$}] 
\emph{Although it does not appear in any of $(\mathbb{A}),(\mathbb{B}),(\mathbb{C})$, this technical requirement plays an important role in the proof. Perhaps the condition is removable, and the issue seems easier to prove in the hypersurface case $\ell = m-1$.
}
\end{remark}
\begin{remark}[\textbf{On the locally bounded bending condition}]
\emph{The condition is defined in Section \ref{sec_approx}: loosely speaking, it requires the existence of supporting hypersurfaces for $\partial \Omega$ satisfying \eqref{finiteform_vari} and whose second fundamental form is uniformly bounded on compact subsets of $\partial \Omega$. For instance, a $C^{1,1}_\loc$ boundary has locally bounded bending from outwards, but the condition is more general and includes, for instance, the case of convex cones and convex envelopes on Cartan-Hadamard manifolds with bounded sectional curvature. The condition is needed to apply the smooth approximation results that are currently known in the literature, and it would be interesting to remove it. 
}
\end{remark}

We briefly comment on conditions $(\mathbb{A}),(\mathbb{B}),(\mathbb{C})$ in Theorem \ref{omori-yau_vari} by means of some simple examples, leaving further ones to Remark \ref{rem_ABD}. Case $(\mathbb{A})$ applies, for instance, to submanifolds $\Sigma \to \HH^{m}$ of hyperbolic space with mean curvature $\|{\bf H}\|_\infty < 1$, satisfying \eqref{condi_volume} and lying in a horospherically convex domain $\Omega$, that is, in the intersection of (mean convex) horoballs. Indeed, in this case $\Omega$ satisfies \eqref{finiteform_vari} with $\Lambda_\ell = \Lambda_{\ell-1} = 1$, and has locally bounded bending from outwards. Similarly, $(\mathbb{B})$ applies when $\Omega$ can be written as the intersection of mean convex domains bounded by hyperspheres with fixed mean curvature $\Lambda_\ell \in (0,1)$.\par
On the other hand, $(\mathbb{C})$ can be applied when $\Sigma \ra \R^3$ is a complete, immersed minimal surface with compact boundary and finite total curvature, lying in a smooth, $2$-convex domain $\Omega$; this enables us to recover \cite[Thm. 4.1]{adfl}. Indeed, the finite total curvature assumption together with the compactness of $\partial \Sigma$ guarantee, by standard results, that $\Sigma$ has quadratic area growth, namely that $\|\Sigma\|(B_r) \le Cr^2$ for some $C>0$. On the other hand, the maximum principle at infinity in \cite{meeksrosenberg} for pairs of properly immersed minimal surfaces requires further tools, so it cannot be directly obtained from our main result. Observe that the volume growth condition \eqref{condi_volume_para} cannot be weakened to \eqref{condi_volume}, as shown by the example of a higher dimensional catenoid $\Sigma^3 \to \R^4$, that is contained in a slab of $\R^4$ and satisfies $\|\Sigma\|(B_r) \asymp r^3$.

\subsection{The smooth case} \label{rem_smooth}
The strategy of the proof is in principle quite simple and proceeds by contradiction. Suppose that $\Sigma$ is smooth. If the conclusions of Theorem \ref{omori-yau_vari} are not satisfied, we shall find a suitable function $u$ on $\Sigma$, related to the distance $r$ from $\partial \Omega$, that is bounded from above and solves either of the following inequalities on some upper level set $\Omega_\gamma = \{u>\gamma\}$ not intersecting $\partial \Sigma$:
	\begin{equation}\label{le_equaz}
	(\alpha) \ \ \ \ \Delta_\Sigma u \ge \delta \qquad \text{or} \qquad (\beta) \ \ \ \  \Delta_\Sigma u \ge 0,
	\end{equation}
for some constant $\delta>0$, respectively under conditions $\|{\bf H}\|_\infty < \Lambda_\ell$ or $\|{\bf H}\|_\infty = \Lambda_\ell$. Since $\Sigma$ is typically noncompact, to grasp the desired contradiction we then need a Liouville theorem for $u$, that follows from maximum principles at infinity in the spirit of Omori-Yau's ones (cf. \cite{PRS1}). The principles that we need tie to properties coming from stochastic geometry, precisely we shall require that $\Sigma$ be either stochastically complete (case $(\alpha)$) or parabolic (case $(\beta)$). Recall that a boundaryless manifold $\Sigma$ is stochastically complete (respectively, parabolic) if the minimal Brownian motion on $\Sigma$ is non-explosive (respectively, recurrent). A detailed account can be found in \cite{grigoryan,PRS1,amr}, in particular, we underline that $\Sigma$ is stochastically complete under the validity of mild geometric conditions including, for instance, the growth requirement \eqref{condi_volume}. The link to maximum principles has been established in \cite{PRS1,amir}, see also \cite{maripessoa}: the stochastic completeness of $\Sigma$ turns out to be equivalent to the following form of the \emph{weak maximum principle at infinity}: 	
\begin{itemize}
	\item[] for every $u \in C^2(\Sigma)$ that is bounded from above and solves $\Delta_\Sigma u \ge f(u)$ on $\Sigma$, for some $f \in C(\R)$, either
	\[
	\sup_\Sigma u = \sup_{\partial \Sigma} u \qquad \text{or} \qquad f(\sup_\Sigma u ) \le 0.
	\]
\end{itemize}
Clearly, this last property prevents the existence of $u$ bounded from above and satisfying the first in \eqref{le_equaz} on some upper level set. In a similar way, \eqref{condi_volume_para} guarantees the parabolicity of $\Sigma$ (see \cite{PRS1}), that turns out to be equivalent to the following Liouville theorem dating back to L. Ahlfors (cf. \cite[Thm. 6C]{ahlforssario} and \cite{imperapigolasetti}):
\begin{itemize}
	\item[] for every $u \in C^2(\Sigma)$ that is bounded from above and solves $\Delta_\Sigma u \ge 0$ on $\Sigma$, it holds $\sup_\Sigma u = \sup_{\partial \Sigma} u$.
\end{itemize}
%

\noindent Assume that $\Sigma$ has \emph{compact} boundary. In this case, by definition, $\Sigma$ is stochastically complete, respectively parabolic, provided that some (equivalently, every) double $\mathscr{D}(\Sigma)$ of $\Sigma$ is so\footnote{Given $\Sigma$ with compact boundary, recall that a double $\mathscr{D}(\Sigma)$ of $\Sigma$ is any manifold constructed by gluing two copies of $\Sigma$ along their boundary, and keeping the original metric outside of a relatively compact neighbourhood of $\partial \Sigma$. The property that $\mathscr{D}(\Sigma)$ is stochastically complete, or parabolic, does not depend on the choices made in the gluing region, cf. \cite[Sec. 7.3]{PRS2}.}. In summary, if $\Sigma$ is smooth and $\partial \Sigma$ is compact, then the mass growth condition \eqref{condi_volume} can be replaced by either of the following assumptions:
$$
\begin{array}{rl}
(i) & \qquad \text{some (equivalently, every) double of $\Sigma$ is stochastically complete;} \\[0.2cm]
(ii) & \qquad \text{$\Sigma$ is properly immersed into $\Omega$.}
\end{array}
$$
Similarly, if $\Sigma$ is smooth and $\partial \Sigma$ is compact, \eqref{condi_volume_para} can be replaced by the requirement that 
$$
\begin{array}{rl}
(iii) & \qquad \text{some (equivalently, every) double of $\Sigma$ is parabolic.}
\end{array}
$$
It remains to comment on condition $(ii)$, and show that it implies $(i)$. To see this, combining $(ii)$ with $\|{\bf H}\|_\infty < \Lambda_\ell$, and using \cite[Example 1.14]{PRS1}, one can construct on $\Sigma$ a proper function $w$ satisfying
	\begin{equation}\label{strongKhasm}
	\left\{\begin{array}{l}
	w(x) \ra +\infty \qquad \text{as } \, x \ra \infty, \\[0.2cm] 
	|\nabla w| \le C, \qquad \Delta w \le C \qquad \text{on } \, \Sigma,
	\end{array}\right.
	\end{equation}
for some constant $C>0$. Here, the first line means that $w$ has compact sublevel sets in $\Sigma$ (that is, including the boundary). Given a double $\mathscr{D}(\Sigma)$, by doubling $w$ and mollifying it in the gluing region one can therefore match all of the conditions in \eqref{strongKhasm} on $\mathscr{D}(\Sigma)$, up to enlarging $C$. Hence, because of \cite[Thm. 1.9]{PRS1}, the full Omori-Yau maximum principle holds on $\mathscr{D}(\Sigma)$, which implies the stochastic completeness of $\mathscr{D}(\Sigma)$. For a detailed analysis of the relations between the two principles, we refer the reader to \cite{PRS1,amr,maripessoa}.

\begin{remark}\label{rem_ABD}
\emph{A first non-enclosure result in the spirit of Theorem \ref{omori-yau_vari} was given by L. Al\'\i as, G.P. Bessa and M. Dajczer \cite{Bessa}, who proved the following: if $\Sigma$ is a $\ell$-dimensional submanifold properly immersed into a cylinder $\Omega = \mathbb{R}^s \times B_r^{m-s}$, where $\ell > s$ and $B_r$ is a regular, convex geodesic ball into a $(m-s)$-dimensional Riemannian manifold with sectional curvature bounded from above by $-c \in \R$ (with $r < \frac{\pi}{2\sqrt{-c}}$ if $c<0$), then
	\[
	\|{\bf H}\|_\infty \ge \frac{\ell-s}{\ell} \frac{\cn_c(r)}{\sn_c(r)},
	\]
where $\sn_c,\cn_c$ are the Jacobi functions associated to the space form of sectional curvature $-c$, see Section \ref{sec_prelim} below. In particular, $\Omega$ does not contain stochastically complete, minimal $\ell$-submanifolds without boundary. In an unpublished paper, using a version of the sliding method J. Espinar and H. Rosenberg \cite{Espinar} gave a different proof of this result by showing that the distance between $\Sigma$ and the barrier remains positive. We here derive the result as a direct application of Theorem \ref{omori-yau_vari}, taking into account the discussion in Subsection \ref{rem_smooth}: it is enough to observe that, by the Hessian comparison theorem, the second fundamental form of $\partial \Omega$ in the inward direction has the zero eigenvalue with multiplicity $s$, while $m-s$ eigenvalues are at least $\cn_c(r)/\sn_c(r) >0$, so	
	\[
	\PP_\ell^-(\II_{\partial \Omega}) \ge \frac{\ell-s}{\ell} \frac{\cn_c(r)}{\sn_c(r)}.
	\]
Similarly, we recover the mean curvature estimates in \cite{BLPS} for immersions into horocylinders.
}
\end{remark}

If $\Sigma$ is smooth and without boundary, and $\Sigma$ approaches $\partial \Omega$, we can guarantee both that $\Sigma$ is stochastically incomplete, and that the Laplace-Beltrami operator of $\Sigma$ has discrete spectrum. The corollary below generalizes a result due to G.P. Bessa, L.P. Jorge and J.F. Montenegro in \cite{bjm}, who considered the spectrum of hypersurfaces properly contained in regular balls, especially, of minimal surfaces properly contained in a ball of $\R^3$.

\begin{corollary}\label{cor_spectrum}
In the assumptions of Theorem \ref{omori-yau_vari} on $M$ and $\Omega$, let $\Sigma \ra \Omega$ be a smooth $\ell$-dimensional immersed manifold without boundary. If 
\begin{equation}\label{eq29}
\begin{array}{l}
\mathrm{dist}(x, \partial \Omega) \ra 0 \qquad \text{as $x \in \Sigma$, $x \to \infty$,} \\[0.3cm]
\disp \|{\bf H}\|_\infty < \Lambda_\ell,
\end{array}
\end{equation}
where the first condition means that $\{ x \in \Sigma, \mathrm{dist}(x, \partial \Omega) > \eps\}$ is compact for each $\eps > 0$, then $\Sigma$ is stochastically incomplete and the spectrum $\sigma(\Delta_\Sigma)$ of its Laplace-Beltrami operator is discrete.
\end{corollary}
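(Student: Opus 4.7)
My plan is to derive both claims from Theorem~\ref{omori-yau_vari} and from the analytic machinery that underlies its proof. Hypothesis~\eqref{eq29} forces $\dist(\supp\|\Sigma\|,\partial\Omega)=0$, so for stochastic incompleteness I argue by contradiction: if $\Sigma$ were stochastically complete then, since $\partial\Sigma=\emptyset$ is trivially compact, the discussion in Subsection~\ref{rem_smooth} allows replacing the mass-growth condition~\eqref{condi_volume} of Theorem~\ref{omori-yau_vari} by condition $(i)$ there, so the theorem applies. In case $(\mathbb{A})$ (when $\Lambda_\ell^2\ge c$) the ``moreover'' part denies the existence of such a boundaryless $\Sigma\subset\Omega$, a contradiction; in case $(\mathbb{B})$ (when $\Lambda_\ell^2<c$) estimate~\eqref{bonita_bis} yields $\dist(\supp\|\Sigma\|,\partial\Omega)\ge(\Lambda_\ell-\|\mathbf{H}\|_\infty)/c>0$, again contradicting $\dist=0$. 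Hence $\Sigma$ must be stochastically incomplete.

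For the discreteness of $\sigma(\Delta_\Sigma)$ I appeal to Persson's formula
\[
\inf\sigma_{\mathrm{ess}}(\Delta_\Sigma)=\sup_{K\Subset\Sigma}\lambda_1^D(\Sigma\setminus K),
\]
so it suffices to prove that $\lambda_1^D(\Sigma\cap\{r<\eps\})\to+\infty$ as $\eps\to 0$, where $r:=\dist(\cdot,\partial\Omega)$; indeed, $K_\eps:=\Sigma\cap\{r\ge\eps\}$ is compact for every $\eps>0$ by~\eqref{eq29}, so the level sets $\{r<\eps\}$ are exactly the complements of compact sets in $\Sigma$. The analytic core of Theorem~\ref{omori-yau_vari}, as foreshadowed in Subsection~\ref{rem_smooth}, produces in the barrier sense a strictly increasing function $\varphi\colon[0,\eps_0]\to[0,\infty)$ with $\varphi(0)=0$ and
\[
\Delta_\Sigma\,\varphi(r)\le-\delta<0 \qquad \text{on }\Sigma\cap\{r<\eps_0\},
\]
for some $\delta,\eps_0>0$ depending on $\Lambda_\ell-\|\mathbf{H}\|_\infty$; the bounded subsolution used in Subsection~\ref{rem_smooth} to trigger the weak maximum principle is precisely $u=\mathrm{const}-\varphi(r)$.

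Take now $\phi:=\varphi(r)^\beta$ with any fixed $0<\beta<1$. The chain rule gives
\[
\Delta_\Sigma\phi=\beta(\beta-1)\,\varphi^{\beta-2}(\varphi')^2|\nabla_\Sigma r|^2+\beta\,\varphi^{\beta-1}\,\Delta_\Sigma\varphi(r)\le-\beta\,\delta\,\varphi^{\beta-1},
\]
since $\beta(\beta-1)<0$ makes the first term nonpositive. Dividing by $\phi=\varphi^\beta>0$ yields
\[
-\frac{\Delta_\Sigma\phi}{\phi}\;\ge\;\frac{\beta\,\delta}{\varphi(r)}\;\ge\;\frac{\beta\,\delta}{\varphi(\eps)}\qquad\text{on }\Sigma\cap\{r<\eps\},
\]
whence by Barta's inequality $\lambda_1^D(\Sigma\cap\{r<\eps\})\ge\beta\,\delta/\varphi(\eps)\to+\infty$ as $\eps\to 0$, since $\varphi$ is continuous with $\varphi(0)=0$. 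Persson's formula then gives $\sigma_{\mathrm{ess}}(\Delta_\Sigma)=\emptyset$, so $\sigma(\Delta_\Sigma)$ is discrete.

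The only delicate point is the construction of $\varphi$ in the barrier sense: this is exactly the technical heart of Theorem~\ref{omori-yau_vari} and relies on Hessian comparison under $\Ric^{(\ell-1)}\ge-c$ together with the locally bounded bending assumption, which allows $\partial\Omega$ to be approximated by smooth hypersurfaces inheriting~\eqref{finiteform_vari}. Once $\varphi$ is available, the passage from $\Delta_\Sigma\varphi(r)\le-\delta$ to the spectral conclusion via the test function $\varphi(r)^\beta$, Barta's inequality and Persson's formula is entirely standard, and the computation above is to be justified, as usual, by replacing $\varphi(r)$ with smooth barriers from below.
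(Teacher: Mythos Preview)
Your argument is correct and follows the same blueprint as the paper: build the barrier $u=\eta(r)$ from Proposition~\ref{prop_constr_u}, pass to $\Delta_\Sigma$ via the chain rule, then invoke the weak maximum principle for stochastic incompleteness and Persson--Barta for discreteness. A few points of comparison are worth noting. First, the paper never works with $\varphi(r)$ in the barrier sense on $\Sigma$; instead it uses the \emph{global} smooth approximation $u_\eps$ produced by Proposition~\ref{u-good}, so that $\Delta_\Sigma u_\eps\ge \ell\bar\delta/2$ holds pointwise and the Barta test function $u_\eps^*-u_\eps$ is automatically $C^2$ and strictly positive (the supremum is not attained by the strong maximum principle). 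This sidesteps the justification you defer to your last paragraph; note also that for a supersolution $\Delta_\Sigma\varphi\le -\delta$ the smooth barriers must touch from \emph{above}, not from below as you wrote. Second, the exponent $\beta<1$ is unnecessary: taking $\beta=1$ already gives $-\Delta_\Sigma\varphi/\varphi\ge \delta/\varphi(\eps)\to\infty$, which is exactly the paper's computation with $u_\eps^*-u_\eps$ playing the role of $\varphi$. Your packaging of the stochastic-incompleteness step as a contrapositive appeal to Theorem~\ref{omori-yau_vari} via Subsection~\ref{rem_smooth}(i) is perfectly fine and equivalent to the paper's direct verification.
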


\vspace{0.5cm}

\noindent {\bf Acknowledgements.} We thank the referee for his/her very careful reading of the manuscript, in particular, for suggesting to us the argument at the end of the proof of Theorem \ref{MPV_para}. E.S.G. thanks Prof. Diego Moreira for valuable conversations about Lipschitz functions. This work was done when E.S.G. was temporary professor at Universidade Federal do Cear\'a, Fortaleza. He thanks the institution for the fruitful working environment.

\section{Preliminaries}\label{sec_prelim}

The lower bound on $\Ric^{(\ell-1)}$ in the statement of Theorem \ref{omori-yau_vari} enables to apply comparison results for the distance from $\partial \Omega$. We follow the approach to comparison theory via Riccati equations, that has been extensively developed by J.-H. Eschenburgh and E. Heinze \cite{EschHein,Esch_1,Esch_2}, see also \cite{petersen} and \cite[Ch. 2]{PRS2} for a detailed account. Our version of the comparison theorem below is slightly stronger than those that we found in the above references, which suggested us to provide a concise yet complete proof. A corresponding statement considering the distance to a fixed point can be found in \cite[Prop. 7.4]{maripessoa}. We first recall that, given an open subset $\Omega \subset M$ and a point $y \in \partial \Omega$, a smooth hypersurface without boundary $S$ is said to be \emph{supporting (for $\Omega$) at $y$} if $y \in S$ and $\Omega \cap S = \varnothing$. We also say that $S$ touches $\Omega$ at $y$ from the outside. By modifying $S$ in a small neighbourhood around $y$, we can assume that $S$ is the boundary of a small, connected open set $B_S$ disjoint from $\Omega$ and with $\overline{B}_S$ diffeomorphic to a ball. \\[0.2cm]
\noindent \textbf{Agreement. } Hereafter, a supporting hypersurface $S$ will always be the boundary of $B_S$   as above. In particular, $M\backslash S$ has two connected components (we always assume $M$ to be connected).
%

\begin{definition}\label{def_barrier}
Let $\Omega \subset M^m$ be an open subset with $\partial \Omega \neq \varnothing$, and let $\ell\in \{1, \ldots, m-1\}$.
\begin{itemize}
\item[(i)] Given $\Lambda_\ell \in C(M)$, we say that the second fundamental form $\II_{\partial \Omega}$ in the inward direction satisfies
	\[	
	\PP_{\ell}^-[\II_{\partial \Omega}] \ge \Lambda_{\ell} \qquad  \text{on } \, \partial \Omega
	\]
in the barrier sense if the following holds: for every $y \in \partial \Omega$ and every $\eps>0$, there exists a supporting hypersurface $S_\eps$ at $y$ such that $\PP_\ell^-[ \II_{S_\eps}](y) > \Lambda_\ell(y) - \eps$, where $\II_{S_\eps}$ is the second fundamental form of $S_\eps$ in the direction pointing towards $\Omega$ (i.e., the exterior normal to $B_{S_\eps}$).
\end{itemize}
\item[(ii)] Given $\Lambda_\ell,\Lambda_{\ell-1} \in C(M)$, we say that $\II_{\partial \Omega}$ in the inward direction satisfies
	\[
	\PP_{\ell-1}^-[\II_{\partial \Omega}] \ge \Lambda_{\ell-1}, \qquad \PP_{\ell}^-[\II_{\partial \Omega}](y) \ge \Lambda_{\ell} \qquad  \text{on } \, \partial \Omega
	\]
in the barrier sense if, at every $y \in \partial \Omega$, there exists a supporting hypersurface $S_\eps$ satisfying \emph{both} of the inequalities
	\[
	\PP_{\ell-1}^-[\II_{S_\eps}](y) > \Lambda_{\ell-1}(y)-\eps, \qquad \PP_{\ell}^-[\II_{S_\eps}] > \Lambda_{\ell}(y) - \eps.
	\]
\end{definition}

Consider the signed distance function $r$ from $\partial \Omega$, with the agreement that $\Omega = \{r>0\}$. 
Let $x \in \Omega$, and let $y \in \partial \Omega$ be a nearest point to $x$, i.e. it satisfies $r(x) = \mathrm{dist}(x,y)$. Hereafter, a segment will mean a unit speed geodesic that is minimizing for its endpoints. 

\begin{remark}\label{rem_unique}
\emph{Let $x \in \Omega$ and let $y \in \partial \Omega$ be a nearest point to $x$. If there exists a supporting hypersurface $S$ at $y$, then there exists a unique segment $\gamma_y$ from $y$ to $x$ with interior contained in $\Omega$. Indeed, if $\gamma,\sigma$ are segments from $y$ to $x$ whose restriction to $(0,r(x))$ is contained in $\Omega$, they both are segment also from $S$ to $x$. Hence, they issue orthogonally to $S$, and both point in the component of $M \backslash S$ containing $\Omega$. This forces $\gamma'(0) = \sigma'(0)$, and thus $\gamma \equiv \sigma$.
}
\end{remark}

If $S$ is a supporting hypersurface at $y$, then the signed distance $\bar r$ from $S$ satisfies $\bar r \ge r$ on $\Omega$, with equality at $x$. It is known that $\bar r$ is smooth on the open set $\Omega \backslash \cut(S)$ and up to $\partial S$, with $\cut(S)$ the cut-locus of $S$. If $\gamma : [0, r(x)] \ra M$ is a segment from $y$ to $x \in \Omega$, it is known that $\gamma\big([0, r(x))\big) \subset \Omega \backslash \cut(S)$. All of these facts can be easily adapted from the corresponding ones for the distance to a point, and for a detailed treatment we suggest \cite[Chapters 3 and 5]{petersen}. We use the index agreement 
	\[
	1 \le i,j,k,t \le m, \qquad 2 \le \alpha,\beta \le m. 
	\]
Differentiating twice the identity $|\nabla \bar r|^2 =1$ in a neighbourhood of $\gamma\big([0, r(x))\big)$, by Ricci commutation laws we deduce
\begin{equation}\label{basehessian}
\bar r_i \bar r_{ij} = 0, \qquad \bar r_i \bar r_{ijk} + \bar r_{ij} \bar r_{ik} + \bar r_i \bar r_tR_{ijtk} = 0.
\end{equation}
Choose an orthonormal basis $\{E_j(t)\}$ which is parallel along $\gamma$ and such that $E_1 = \gamma'$, and let $B(t) \in \mathrm{Sym}(\R^{m-1})$ represent the $(1,1)$-version of $\nabla^2 \bar r(\gamma(t))$ restricted to $E_1^\perp$ in the basis $\{E_\alpha(t)\}$. Then, by \eqref{basehessian},
\begin{equation}\label{eq_riccati}
\left\{ \begin{array}{l}
B' + B^2 + R_\gamma = 0 \qquad \text{on $(0,r(x))$,} \\[0.2cm]
B(0)_{\alpha\beta} = -\II_{S}\big(E_\alpha(0),E_\beta(0)\big).
\end{array}\right.
\end{equation}
where $(R_\gamma)_{\alpha\beta}(t) = R\big(E_\alpha(t), \gamma'(t), E_\beta(t), \gamma'(t)\big)$. 

The next lemma is probably well-known, but we didn't find a precise reference and so we provide a full proof.

\begin{lemma}\label{lem_noncut}
Fix $x \in \Omega$ and a nearest point $y \in \partial \Omega$ to $x$. For any supporting hypersurface $S$ at $y$, then there exists $S'$, close to $S$ in the $C^\infty$ topology in a neighbourhood of $y$, still supporting at $y$, and such that $x \not \in \cut(S')$.
\end{lemma}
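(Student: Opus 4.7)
The plan is to construct $S'$ as a $C^\infty$-small perturbation of $S$ inside a Fermi coordinate neighbourhood of $y$, obtained by pushing $S$ slightly away from $\Omega$ with a strictly concave profile. This perturbation will simultaneously make $y$ a strict local minimizer of $\dist(\cdot,x)$ on $S'$ and strictly enlarge the focal distance of $S'$ along $\gamma_y$, which together force $x\notin\cut(S')$.

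Concretely, I would introduce Fermi coordinates $(s,z)=(s,z^1,\ldots,z^{m-1})$ of $S$ in a neighbourhood $U$ of $y$, with $s>0$ on the $\Omega$-side, so that $S\cap U=\{s=0\}$ and $y=(0,0)$. Since $y$ is simultaneously a nearest point on $S$ to $x$ (the enclosed set $B_S$ being disjoint from $\Omega$), the segment $\gamma_y$ (unique by Remark \ref{rem_unique}) hits $S$ orthogonally at $y$, giving $\gamma_y'(0)=\partial_s$. I fix a smooth nonnegative cutoff $\psi\colon\mathbb{R}^{m-1}\to[0,\infty)$, compactly supported in the $z$-coordinate domain, with $\psi(0)=0$, $\nabla\psi(0)=0$, $\nabla^2\psi(0)=\mathrm{Id}$, and $\psi(z)>0$ for $z\ne 0$ in its support. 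For small $\eps>0$, set
\[
S'\cap U\;:=\;\{(s,z)\in U\,:\,s=-\eps\psi(z)\},
\]
and extend $S'$ outside $U$ to a smooth closed hypersurface bounding a region disjoint from $\Omega$ and with $\dist(S'\setminus U,x)>r(x)$. Since $\psi\ge 0$ and $\Omega\cap U\subset\{s>0\}$, the local graph lies in $\{s\le 0\}$ with equality only at $y$, whence $S'$ is supporting at $y$; moreover $S'\to S$ in $C^\infty$ on any fixed $V\Subset U$ as $\eps\to 0$.

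To verify $x\notin\cut(S')$, I would check the two usual conditions: (a) $y$ is the unique nearest point on $S'$ to $x$, and (b) $x$ is not a focal point of $S'$ along $\gamma_y$. For (a): on $S'\setminus U$ no competitor exists by the choice of extension; on $S'\cap U$, parametrize $S'$ by $z\mapsto P(z)=(-\eps\psi(z),z)$ and set $F(z)=\dist^2(P(z),x)$. Using $\nabla\dist(\cdot,x)|_y=-\partial_s$, a chain-rule computation gives $\nabla F(0)=0$ and
\[
\nabla^2 F(0)\;=\;\nabla^2(F|_S)(y)\;+\;2\eps\,r(x)\,\mathrm{Id},
\]
which is strictly positive definite because the first summand is nonnegative ($y$ minimizes $F|_S$) and the second is positive; shrinking $U$ then yields global uniqueness on $S'\cap U$. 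For (b): the standard Fermi-coordinate formula for second fundamental forms of graphs tangent to a reference hypersurface at a critical point of the graph function gives $\II_{S'}(y)=\II_S(y)-\eps\,\mathrm{Id}$, so the Riccati initial datum $B(0)=-\II_{S'}(y)$ along $\gamma_y$ strictly dominates its counterpart $\tilde B(0)=-\II_S(y)$ for $S$. The comparison principle for the matrix Riccati equation \eqref{eq_riccati} then forces the blow-up time of $B$ to be strictly greater than that of $\tilde B$, which is $\ge r(x)$ by the minimality of $\gamma_y$. Hence $x$ lies strictly before the first focal point of $S'$.

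The main obstacle is the construction of the extension of $S'$ outside $U$ guaranteeing $\dist(S'\setminus U,x)>r(x)$. When $y$ is the unique nearest point on $S$ to $x$, this is painless: by compactness (taking $U$ small enough), already $\dist(S\setminus U,x)>r(x)$, so one just keeps $S'=S$ off $U$. In the general case $S$ may admit other nearest points to $x$ lying outside $U$, and one must then reshape $S$ significantly far from $y$, for instance by replacing a large portion of $S$ with the boundary of a small geodesic ball inside $B_S$ centred far from $x$ and glued smoothly to $S'\cap U$ through a transition region, while preserving the requirement that $S'$ bound a region diffeomorphic to a ball and disjoint from $\Omega$. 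This freedom is exactly what the lemma affords by asking only $C^\infty$-closeness in a neighbourhood of $y$.
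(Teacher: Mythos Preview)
Your construction and overall strategy match the paper's closely: perturb $S$ toward $B_S$ by a quadratic profile at $y$, then verify that the segment from $S'$ to $x$ is unique and that $x$ is not focal for $S'$. Two points, however, need more care.

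The focal-point step hides the main difficulty. The standard matrix Riccati comparison (e.g.\ Eschenburg--Heintze) only asserts that $B(t)\ge\tilde B(t)$ on the maximal existence interval of $\tilde B$; it does not by itself give a \emph{strictly} later blow-up time from a strictly larger initial datum, and this strict conclusion is exactly what you need when $x$ happens to be focal for $S$. Proving it is precisely what the paper does: assuming $J_{S'}(r(x))v=0$ for some unit $v$, it passes to $\bar V_\eps(t)=J_{S'}(t)v\,e^{-\eps t}$, which has initial derivative $-\II_S v$ and thus generates an $S$-admissible variation of $\gamma$; a direct computation then gives second variation $-\eps+\eps^2\int_0^{r(x)}|\bar V_\eps|^2<0$ for $\eps$ small, contradicting the minimality of $\gamma$ from $S$. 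An equivalent route is to compute the $S$-index form of the unmodified field $Y(t)=J_{S'}(t)v$ and invoke the Morse index theorem for focal points, but either way your one-line appeal to ``the comparison principle'' is where the actual work lies.

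For uniqueness of the nearest point the paper's device is cleaner than your Hessian-plus-extension scheme and dissolves what you call the ``main obstacle'': simply arrange $\overline{B}_{S'}\setminus\{y\}\subset B_S$ (keeping the prescribed quadratic graph only in the Fermi box, and closing $S'$ up inside $B_S$). Then any point of $S'\setminus\{y\}$ lies in the open set $B_S$, so every path from it to $x\in\Omega$ must first cross $S$, forcing its length to exceed $r(x)$; thus $y$ is automatically the unique nearest point on $S'$ and Remark~\ref{rem_unique} gives the unique segment. With this choice your local Hessian computation becomes unnecessary and no delicate reshaping ``far from $y$'' is required.
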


\begin{proof}
If $x \not \in \cut(S)$, clearly take $S' = S$. Otherwise, it is known that either there exist at least two, and finitely many, distinct segments from $S$ to $x$, or $x$ is focal for $S$. For $\delta$ small enough, consider a regular geodesic ball $B_{2\delta}(y) \subset S$ and a Fermi chart
	\[
\Psi : (-2\delta,2\delta) \times B_{2\delta}(y) \to U \subset M, \qquad (s,\bar y) \mapsto \exp_{\bar y} \big(s\eta(\bar y)\big),
	\]
where $\eta$ is the unit normal pointing outwards from the open set $B_S$ in the definition of supporting hypersurface. Let $U_s = \Psi\big( (-s,s) \times B_s(y)\big)$, $U^+ = U \cap \{s > 0\}$. Fix $0<\eps << \delta$ and choose a compact, boundaryless hypersurface $S' \subset \overline{B}_S$ in such a way that 
	\[
	S' \cap U_\delta = U_\delta \cap \left\{ \ s = - \frac{\eps}{2} \mathrm{dist}_S(y,\bar y)^2\right\}, \qquad \overline{B}_{S'} \backslash \{y\} \subset B_S. 
	\] 
Then, the second condition guarantees that the distance $\bar r'$ from $S'$ satisfies $\bar r' \ge \bar r$ on $\Omega$, with equality only at $x$ (we call hereafter such a construction an $\eps$-bending of $S$ from outwards).
Observe that the second fundamental form of $S'$ at $y$ in the direction pointing towards $\Omega$ is $\II_{S} - \eps g$, where $g$ is the metric of $M$. Let $\gamma$ be a segment from $y \in S$ to $x$. From $\overline{B}_{S'} \backslash \{y\} \subset B_S$ and Remark \ref{rem_unique}, we infer that $\gamma$ is the unique segment from $S'$ to $x$. Thus, either $\bar r'$ is smooth around $x$, or $x$ is focal for $S'$. We next prove that $x$ cannot be focal. Let $B,B_\eps$ be the matrix representation of, respectively, $\nabla^2 \bar r$ and $\nabla^2 \bar r'$ on $\gamma'^\perp$, in the parallel orthonormal frame $\{E_\alpha\}$, and observe that $B_\eps(0) = -\II_S + \eps I$. 
Denote with $J,J_\eps$ the Jacobi tensors on $\gamma'^ \perp$ corresponding to $B, B_\eps$:
$$
\left\{ \begin{array}{l}
J' = BJ \qquad \text{on } \, [0,r(x)] \\[0.1cm]
J(0)=I,
\end{array}\right. \qquad \left\{ \begin{array}{l}
J_\eps' = B_\eps J_\eps \qquad \text{on } \, [0,r(x)] \\[0.1cm]
J_\eps(0)=I,
\end{array}\right.
$$
that respectively solve 
$$
\left\{ \begin{array}{l}
J'' + R_\gamma J = 0 \qquad \text{on } \, [0,r(x)] \\[0.1cm]
J(0)=I, \ \ J'(0) = B(0), 
\end{array}\right. \qquad \left\{ \begin{array}{l}
J_\eps'' + R_\gamma J_\eps = 0 \qquad \text{on } \, [0,r(x)] \\[0.1cm]
J_\eps(0)=I, \ \ J_\eps'(0) = B_\eps(0). 
\end{array}\right.
$$
Let $v \in \R^{m-1}$, $|v|=1$ be such that the Jacobi field $V_\eps(t) = J_\eps(t)v$ is zero at $r(x)$. We consider the modified field $\bar V_\eps(t) = V_\eps(t)e^{-\eps t}$. Then, 
\begin{equation}\label{eq_barVeps}
\begin{array}{l}
\disp \bar V_\eps(r(x))=0, \qquad \bar V_\eps(0) = v, \qquad \bar V_\eps'(0) = (B_\eps(0) - \eps I)\bar V_\eps(0) = B(0)v, \\[0.2cm]
\disp \bar V_\eps'' + R_\gamma \bar V_\eps = -2\eps \bar V_\eps' - \eps^2\bar V_\eps.
\end{array}
\end{equation}  
In particular, the conditions in the first line imply that $\bar V_\eps$ can be viewed as the variational vector field of a variation $f(s,t) : (-s_0,s_0) \times [0,r(x)] \ra M$ such that 
$$
f(s,r(x)) = x \quad \text{for each } \, s, \qquad f(s,0) \subset S.
$$
Computing the second variation of the energy $E(f_s) = \frac{1}{2} \int_0^{r(x)}|f'_s|^2$, we get
$$
\partial^2_s E(f_s)(0) = \langle B(0)v,v \rangle + \int_0^{r(x)} \Big[ |\bar V_\eps'|^2 - \langle R_\gamma \bar V_\eps, \bar V_\eps \rangle \Big] = - \int_0^{r(x)} \langle \bar V_\eps'' + R_\gamma \bar V_\eps, \bar V_\eps \rangle. 
$$
Using \eqref{eq_barVeps} and integrating by parts, we obtain
\begin{equation}\label{eq_impoperdopo}
\partial^2_s E(f_s)(0) = -\eps |\bar V_\eps(0)|^2 + \eps^2 \int_0^{r(x)}|\bar V_\eps|^2 = -\eps  + \eps^2 \int_0^{r(x)}|\bar V_\eps|^2.
\end{equation}
Next, by Rauch comparison and $R_\gamma \ge \bar c I$, with $\bar c = \inf_{[0,t]} R_\gamma$, we deduce that $|\bar V_\eps|$ is uniformly bounded for $\eps \in (0, \eps_0)$. Thus, for $\eps$ small enough, the left-hand side is negative, hence $\gamma$ cannot be a segment from $x$ to $S$, contradiction.
\end{proof}

Let us first fix some notation: for $c \in \R$, we denote with $\sn_c(t)$ the solution of 
\begin{equation}\label{solitefunzioni}
\left\{ \begin{array}{l}
\sn_c''(t) - c \, \sn_c(t) = 0 \qquad \text{on } \, \R,\\[0.1cm]
\sn_c(0)=0, \quad \sn_c'(0)=1, 
\end{array}\right. \qquad \text{and set } \ \ \  \cn_c(t) := \sn_c'(t). 
\end{equation}
Note that
$$
\sn_c(t) = \left\{ \begin{array}{ll}
\sin(t\sqrt{-c})/\sqrt{-c} & \quad \text{if } \, c<0, \\[0.2cm]
t & \quad \text{if } \, c=0 \\[0.2cm]
\sinh(t\sqrt{c})/\sqrt{c} & \quad \text{if } \, c>0.
\end{array}\right.
$$
Given a $(2,0)$-tensor $T$ at a point $x \in M$, and given a $\ell$-dimensional subspace $\mcal W \le T_xM$, with  $\Tr_{\mcal W} T$ we indicate the trace of $T$ restricted to $\mcal W$:
$$
\Tr_{\mcal W} T = \sum_{i=1}^\ell T(e_i,e_i), \qquad \text{with $\{e_i\}$ an orthonormal basis of $\mcal W$.}
$$

The comparison theorem that we will need throughout the paper is the following:

\begin{proposition}\label{prop_comparison}
Let $(M^m,\metric)$ be a complete manifold such that, for some $\ell \in \{1,\ldots, m-1\}$ and some $c \in \R$, 
\begin{equation}\label{riccik}
\Ric^{(\ell)} \ge - c. 
\end{equation}
Let $\Omega \subset M$ be an open set with non-empty boundary $\partial \Omega$, whose second fundamental form in the inward direction satisfies 
	\[
	\PP_\ell^-[\II_{\partial \Omega}](y) \ge \Lambda_\ell(y) \qquad \forall \, y \in \partial \Omega
	\]
in the barrier sense, for some $\Lambda_\ell \in C(M)$. Let $r$ be the signed distance from $\partial \Omega$, with the agreement that $r>0$ on $\Omega$. Then, setting $\tau_y = - \Lambda_\ell(y)$, for every $x \in \Omega$ and $y \in \partial \Omega$ nearest point to $x$ it holds
\begin{equation}\label{ine_importante}
\sup_{\footnotesize{\begin{array}{c}
\mcal W \le \gamma_y'(r(x))^\perp \\
\text{$\mathcal{W}$ $\ell$-dimensional}
\end{array}}
} 
\left( \frac 1 \ell \Tr_{\mcal W} \nabla^2 r \right) \le \frac{\tau_y \cn_c\big(r(x)\big) + c \, \sn_c \big(r(x)\big)}{\cn_c \big(r(x)\big) + \tau_y \sn_c \big(r(x)\big)}
\end{equation}
in the barrier sense, that is: for every $\eps>0$ and every nearest point $y$ to $x$, there exist a supporting hypersurface $S^y_\eps$ at $y$ whose associated signed distance $\bar r^y_\eps = \mathrm{dist}(S_\eps^y, \cdot)$ satisfies
	\[
	\sup_{\footnotesize{\begin{array}{c}
\mcal W \le \gamma_y'(r(x))^\perp \\
\text{$\mathcal{W}$ $\ell$-dimensional}
\end{array}} 
}
\left( \frac 1 \ell \Tr_{\mcal W} \nabla^2 \bar r^y_\eps \right) \le \frac{\tau_y^\eps \cn_c\big(r(x)\big) + c \, \sn_c \big(r(x)\big)}{\cn_c \big(r(x)\big) + \tau_y^\eps \sn_c \big(r(x)\big)},
	\]
with $\tau_y^\eps := \tau_y + \eps$. 
\end{proposition}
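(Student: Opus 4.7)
The plan is to run a Riccati-equation comparison along the unique minimizing segment from $y$ to $x$, after perturbing the supporting hypersurface at $y$ so as to avoid its cut locus at $x$.

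To set things up, fix $x \in \Omega$, a nearest point $y \in \partial \Omega$, and $\eps > 0$. The barrier-sense hypothesis supplies a supporting hypersurface $S_0$ at $y$ with $\PP_\ell^-[\II_{S_0}](y) > \Lambda_\ell(y) - \eps/2$. Applying Lemma~\ref{lem_noncut} with an $\eps'$-bending, $\eps' < \eps/2$, I obtain $S^y_\eps$ still supporting at $y$, with $x \notin \cut(S^y_\eps)$ and with $\II_{S^y_\eps}(y) = \II_{S_0}(y) - \eps' g$, so that $\PP_\ell^-[\II_{S^y_\eps}](y) > \Lambda_\ell(y) - \eps$. By Remark~\ref{rem_unique}, the segment $\gamma = \gamma_y$ from $y$ to $x$ is unique, and the signed distance $\bar r = \bar r^y_\eps$ is smooth along $\gamma$ up to $x$.

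The core step is a Riccati reduction. In a parallel orthonormal frame $\{E_\alpha\}_{\alpha=2}^m$ of $\gamma'^\perp$, the matrix $B(t)$ representing $\nabla^2 \bar r|_{\gamma'^\perp}$ satisfies \eqref{eq_riccati}, namely $B' + B^2 + R_\gamma = 0$ with $B(0) = -\II_{S^y_\eps}$. Given an $\ell$-dimensional subspace $\mcal W \le \gamma'(r(x))^\perp$, I pick an orthonormal basis $\{e_i\}_{i=1}^\ell$ of $\mcal W$, parallel-transport backward along $\gamma$ to $\{E_i(t)\}$ spanning $\mcal W(t)$, and study $\varphi(t) := \tfrac{1}{\ell}\Tr_{\mcal W(t)} B(t)$. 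Differentiation and the Riccati equation (the parallel transport kills any boundary term) give
\[
\varphi'(t) = -\tfrac{1}{\ell}\sum_{i=1}^\ell |B(t) E_i(t)|^2 - \tfrac{1}{\ell}\Tr_{\mcal W(t)} R_\gamma.
\]
Dropping off-diagonal entries of $|BE_i|^2$ and applying Cauchy--Schwarz on the diagonal yields $\tfrac{1}{\ell}\sum_i |BE_i|^2 \ge \tfrac{1}{\ell}\sum_i B(E_i,E_i)^2 \ge \varphi^2$, while the hypothesis $\Ric^{(\ell)} \ge -c$ gives $\tfrac{1}{\ell}\Tr_{\mcal W(t)}R_\gamma \ge -c$. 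Together these produce the scalar Riccati inequality $\varphi' \le -\varphi^2 + c$, with $\varphi(0) \le -\PP_\ell^-[\II_{S^y_\eps}](y) < \tau_y^\eps$.

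It remains to integrate. The reference ODE $\phi' = -\phi^2 + c$, $\phi(0) = \tau_y^\eps$, linearizes via $\phi = u'/u$ to $u'' = cu$ with $u(0)=1$, $u'(0)=\tau_y^\eps$, so that $u = \cn_c + \tau_y^\eps \sn_c$ and
\[
\phi(t) = \frac{\tau_y^\eps \cn_c(t) + c \sn_c(t)}{\cn_c(t) + \tau_y^\eps \sn_c(t)},
\]
precisely the RHS of \eqref{ine_importante}. A standard first-order comparison for $w = \phi - \varphi$, valid on any interval where $\phi$ is finite, gives $w > 0$, hence $\varphi(r(x)) \le \phi(r(x))$; since $\mcal W$ was arbitrary, this is exactly the claimed inequality for $\bar r^y_\eps$ in the barrier sense. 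The most delicate point will be the setup: combining the barrier-sense hypothesis with Lemma~\ref{lem_noncut} so that $S^y_\eps$ simultaneously avoids $\cut$ at $x$ and preserves the $\PP_\ell^-[\II]$ bound up to an $\eps$-loss; once that is in place, everything else is a routine Riccati computation along a single geodesic.
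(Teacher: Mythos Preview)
Your proof is correct and follows essentially the same route as the paper: pick a supporting hypersurface with the right $\PP_\ell^-$ bound, bend it via Lemma~\ref{lem_noncut} to avoid the cut locus, run the Riccati equation for $B(t)$ along the segment, average over a parallel $\ell$-plane, and compare with the model ODE. The paper uses Newton's inequality on the $\ell\times\ell$ block $\pi_\ell\circ B$ where you instead drop to the diagonal entries and apply Cauchy--Schwarz; both yield the same scalar bound $\tfrac{1}{\ell}\sum_i |BE_i|^2 \ge \varphi^2$. One small point you leave implicit is why $\phi$ stays finite on $[0,r(x)]$: since $\varphi$ is smooth up to $r(x)$ and $\varphi < \phi$ where both exist, $\phi$ is bounded below, while $\phi' = -\phi^2 + c$ prevents blow-up to $+\infty$; the paper handles this by citing a standard Riccati comparison lemma.
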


\begin{proof}
Fix $\eps>0$, let $x \in \Omega$ and let $y \in \partial \Omega$ be a nearest point to $x$. In view of our assumptions and of Lemma \ref{lem_noncut}, we can choose a supporting hypersurface $S_\eps$ at $y$ such that 
	\[
	x \not \in \cut(S_\eps), \qquad \PP_\ell^-[\II_{S_\eps}](y) > \Lambda_\ell(y) - \eps = -\tau_y^\eps.
	\] 
Let $\gamma : [0, r(x)] \ra M$ be the unique segment from $S$ to $x$, and consider the Riccati equation \eqref{eq_riccati} satisfied by $B(t)$. Fix an orthonormal basis $\{v_j\}_{j=1}^\ell$ for an $\ell$-subspace $\mcal{W} \le \gamma'(r(x))^\perp$, and extend them to parallel fields $\{V_j\}_{j=1}^\ell \subset \gamma'^\perp$ along $\gamma$. We still call $\mathcal{W}$ the span of $V_1,\ldots, V_\ell$, and we denote with $\pi_\ell : T_{\gamma(t)}M \ra \mathcal{W}$ the orthogonal projection. From \eqref{riccik} and the definition of $\Ric^{(\ell)}$,
$$
\sum_{j=1}^\ell \langle R_\gamma V_j, V_j \rangle \ge \ell \Ric^{(\ell)}(\gamma') \ge - \ell c,
$$
Tracing \eqref{eq_riccati} on $\{V_j\}$ and using the last inequality we deduce that the function 
$$
\theta_y(t) : = \frac{1}{\ell}\sum_{j=1}^\ell \langle B V_j, V_j \rangle(t)
$$
satisfies
$$
\left\{ \begin{array}{l}
\disp \theta_y'(t) + \ell^{-1}\sum_{j=1}^\ell \langle B^2V_j,V_j \rangle - c \le 0 \qquad \text{on } \, (0, r(x)], \\[0.2cm]
\theta(0) = \ell^{-1}\sum_{j=1}^\ell \langle BV_j,V_j\rangle(0).
\end{array}\right.
$$
By the min-max characterization of eigenvalues and since $B(0) = -\II_{S_\eps}$, 
$$
\frac 1 \ell \sum_{j=1}^\ell \langle BV_j,V_j\rangle(0) \le - \PP_\ell^-[\II_{S_\eps}](y) \le \tau^\eps_y.
$$
Furthermore, using Newton's inequality, we get
$$
\sum_{j=1}^\ell \langle B^2V_j,V_j \rangle \ge \sum_{j=1}^\ell \langle (\pi_\ell \circ B)^2 V_j,V_j \rangle \ge \ell \theta_y^2,
$$
hence 
\begin{equation}\label{inericcati}
\left\{ \begin{array}{l}
\disp \theta_y' + \theta_y^2 - c \le 0 \qquad \text{on } \, (0, r(x)], \\[0.2cm]
\disp \theta_y(0) \le \tau^\eps_y.
\end{array}\right.
\end{equation}
By Riccati's comparison for ODE (see for instance Lemma 2.1 and Corollary 2.2 in \cite{PRS2}), 
$$
\theta_y(t) \le \frac{\tau_y^\eps  \cn_c(t) + c  \, \sn_c(t)}{\cn_c(t) + \tau_y^\eps \sn_c(t)} \qquad \text{on } \, (0, r(x)],
$$
the right-hand side being the solution of \eqref{inericcati} with equality signs. In particular, the inequality implies that the denominator of the right-hand side never vanishes on $(0,r(x)]$. The desired inequality \eqref{ine_importante} follows by setting $t = r(x)$ from the arbitrariness of $\eps$, $\mcal W$ and of $y$.
\end{proof}

\section{Construction of the barrier}

Following most of the literature quoted in the Introduction, our argument depends on the construction of a geometrically useful function $u$ satisfying the inequality 
	\[
	\PP_\ell^-[\nabla^2 u] - h|\nabla u|  \ge \delta 
	\]
on a suitable subset of $\Omega$, for some constant $\delta \ge 0$. The existence of such $u$ is often guaranteed by the definition of $\Omega$ itself, as for instance in \cite{DS}, where $\Omega = u^{-1}((-\infty, 0))$ and $u$ is as in \eqref{def_Phi_DS}: under the requirements $s < \ell$, $c \le \frac{\ell-s}{s}$, it can be checked both that $\partial \Omega$ is $\ell$-mean convex, and that
	\begin{equation}\label{eq_Pkmeno_HL}
	\PP_\ell^-[\nabla^2 u] \ge 0 \qquad \text{on } \, \Omega,
	\end{equation}
the latter implying the subharmonicity of $u$ when restricted to minimal $\ell$-dimensional submanifolds. In the generality of Theorem \ref{omori-yau_vari}, however, it seems not obvious that such $u$ must exist. Hence, the goal of the present section is to provide a positive answer and a general construction that works under fairly weak assumptions on $M$ and $\Omega$. Similar arguments were used by L. Mazet in \cite[Sec. 6.1]{mazet}. Note that we cannot exploit (at least, not directly) the theory developed for \eqref{eq_Pkmeno_HL} by R. Harvey and B. Lawson in a series of papers (cf. \cite{HL_dir,HL_plurisub}), since their existence results using Perron's method are based on the knowledge, a priori, of a subsolution of \eqref{eq_Pkmeno_HL}.\par
Recall that a function $u$ is said to solve 
	\begin{equation}\label{eq_Pkm_general}
	\PP_\ell^-[\nabla^2 u] \ge f(x, u, \nabla u) \qquad \text{in the barrier sense on } \, \Omega
	\end{equation}
if, for every $x \in \Omega$ and every $\eps>0$, there exists a smooth $u_\eps$ in a neighbourhood of $x$ such that $u_\eps \le u$, $u_\eps(x) = u(x)$ and  	
	\[
	\PP_\ell^-[\nabla^2 u_\eps](x) \ge f(x, u_\eps(x), \nabla u_\eps(x)) - \eps.
	\]
Evidently, if $u$ solves \eqref{eq_Pkm_general} in the barrier sense, it also solve the inequality in the viscosity sense. 

\begin{proposition}\label{prop_constr_u}
Let $M^m$ be a complete manifold satisfying 
\begin{equation}\label{riccik_gen}
\Ric^{(\ell-1)} \ge - c, 
\end{equation}
for some $\ell \in \{2,\ldots, m-1\}$ and some $c \in \R$. Let $\Omega \subset M$ be an open set whose second  fundamental form $\II_{\partial \Omega}$ in the inward direction satisfies  
\begin{equation}\label{secondfund_gen}
\PP_{\ell-1}^-[\II_{\partial \Omega}] \ge \Lambda_{\ell-1}, \qquad \PP_\ell^-[\II_{\partial \Omega}] \ge \Lambda_\ell \ge 0
\end{equation}
in the barrier sense, for some $\Lambda_{\ell-1} \in \R$, $\Lambda_\ell\in [0,\infty)$. Choose a constant $h$ satisfying 
$$
0 \le h \le \Lambda_\ell, \qquad \text{with } \ \ h < \Lambda_\ell \ \  \text{ if } \  \ \Lambda_\ell^2 < c,
$$
and let $R \in \R$ satisfying 
$$
\begin{array}{ll}
R> 0 & \qquad \disp \text{if} \qquad \Lambda_\ell^2 \ge c \\[0.3cm]
\disp R \in \left( 0, \frac{\Lambda_\ell - h}{c} \right) & \qquad \disp \text{if} \qquad \Lambda_\ell^2 < c.
\end{array}
$$
Then, on the set 
$$
\Omega_R : = \Big\{ x \in \Omega \ : \ \mathrm{dist}(x, \partial \Omega) < R \Big\} 
$$
there exists $0 < u \in \lip_\loc(\Omega)$ such that 
\begin{equation}\label{proprie_u}
\begin{array}{l}
\text{$u$ only depends on the distance $r$ to $\partial \Omega$, and is strictly decreasing in $r$;} \\[0.2cm] 
|\nabla u| = C_2 u \quad \text{where $u$ is differentiable, for constant $C_2 (c,\ell, \Lambda_{\ell-1},h, \Lambda_\ell - h) > 0$}\\[0.2cm]
\end{array}
\end{equation}
and $u$ satisfies the following inequality in the barrier sense on $\Omega_R$: 
\begin{equation}\label{proprie_u_sol}
\PP_\ell^-[\nabla^2 u] - h|\nabla u| \ge \left\{ \begin{array}{ll}
\bar \delta & \quad \text{if } \, h < \Lambda_\ell, \\[0.2cm]
0 & \quad \text{if } \, h = \Lambda_\ell,
\end{array}\right.
\end{equation}
for some positive constant $\bar \delta(c,\ell, \Lambda_{\ell-1}, h, \Lambda_\ell - h, R)$.
\end{proposition}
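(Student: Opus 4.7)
The plan is to take $u := \varphi \circ r$, with $r$ the signed distance from $\partial \Omega$ (positive on $\Omega$) and $\varphi(s) := e^{-C_2 s}$ for a constant $C_2 > 0$ to be chosen. This $u$ is positive, strictly decreasing in $r$, locally Lipschitz (as $r$ is $1$-Lipschitz), and fulfills $|\nabla u| = C_2 u$ where differentiable, matching the required relation. To verify $\PP_\ell^-[\nabla^2 u] - h|\nabla u| \geq \bar\delta$ in the barrier sense at an arbitrary $x_0 \in \Omega_R$ with slack $\eps > 0$, I would pick a nearest point $y \in \partial \Omega$ to $x_0$ and, for an auxiliary parameter $\eps'>0$ to be shrunk later, use Definition \ref{def_barrier}(ii) to produce a single supporting hypersurface $S$ at $y$ realising both $\PP_\ell^-[\II_S](y) > \Lambda_\ell - \eps'$ and $\PP_{\ell-1}^-[\II_S](y) > \Lambda_{\ell-1}-\eps'$. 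Via Lemma \ref{lem_noncut} I can further arrange $x_0 \notin \cut(S)$, making $\bar r := \dist(S, \cdot)$ smooth in a neighbourhood of $x_0$. The barrier $u_\eps := \varphi \circ \bar r$ then satisfies $u_\eps \leq u$ with equality at $x_0$, by the monotonicity of $\varphi$ and $\bar r \geq r$.

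\textbf{Computation of $\PP_\ell^-[\nabla^2 u_\eps]$.} I would decompose a generic $\ell$-dimensional subspace $\mcal W \leq T_{x_0}M$ as $\mcal U \oplus \langle \cos\theta \, \nabla \bar r + \sin\theta \, w\rangle$ with $\mcal U \leq \nabla\bar r^\perp$ of dimension $\ell-1$, $w \in \nabla\bar r^\perp \cap \mcal U^\perp$ unit, and $\theta \in [0,\pi/2]$. Using $\nabla^2 \bar r(\nabla \bar r, \cdot) = 0$ and setting $r_0 := r(x_0) = \bar r(x_0)$, a direct expansion gives
$$
\tfrac{1}{\ell}\Tr_{\mcal W} \nabla^2 u_\eps = \cos^2\theta \cdot \tfrac{1}{\ell}\bigl[\varphi''(r_0) + \varphi'(r_0)\Tr_{\mcal U}\nabla^2\bar r \bigr] + \sin^2\theta\cdot \varphi'(r_0)\cdot \tfrac{1}{\ell}\Tr_{\mcal U\oplus\langle w\rangle}\nabla^2\bar r.
$$
Since $\Ric^{(\ell-1)} \geq -c$ implies $\Ric^{(\ell)}\geq -c$ and $S$ witnesses both $\PP$-inequalities, Proposition \ref{prop_comparison} applied to $\bar r$ (with both $\ell$ and $\ell-1$) yields $\Tr_{\mcal U}\nabla^2\bar r \leq (\ell-1)\psi_{\ell-1}^{\eps'}(r_0)$ and $\Tr_{\mcal U\oplus\langle w\rangle}\nabla^2\bar r \leq \ell\, \psi_\ell^{\eps'}(r_0)$, where $\psi_k^{\eps'}(s) = [(-\Lambda_k + \eps')\cn_c(s) + c\,\sn_c(s)]/[\cn_c(s) + (-\Lambda_k + \eps')\sn_c(s)]$. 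Plugging these in and using $\varphi'(r_0) < 0$, the minimum over $\theta$ gives $\PP_\ell^-[\nabla^2 u_\eps](x_0) \geq \min(L_1,L_2)$ for
$$
L_1 := \tfrac{1}{\ell}\bigl[\varphi''(r_0) + (\ell-1)\varphi'(r_0)\psi_{\ell-1}^{\eps'}(r_0)\bigr], \qquad L_2 := \varphi'(r_0)\psi_\ell^{\eps'}(r_0).
$$

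\textbf{Choice of $C_2$ and $\eps'$.} Substituting $\varphi(s) = e^{-C_2 s}$ and $|\nabla u_\eps|(x_0)=C_2 e^{-C_2 r_0}$ one obtains $L_1 - h|\nabla u_\eps| = \tfrac{C_2 e^{-C_2 r_0}}{\ell}[C_2 - (\ell-1)\psi_{\ell-1}^{\eps'}(r_0) - \ell h]$ and $L_2 - h|\nabla u_\eps| = -C_2 e^{-C_2 r_0}[\psi_\ell^{\eps'}(r_0) + h]$. An elementary analysis of the Riccati ODE $\psi' = c -\psi^2$ (centered on the fixed points $\pm \sqrt c$ when $c \geq 0$) shows $\psi_{\ell-1}^{\eps'}$ is bounded above on its domain of existence by a constant depending only on $c$ and $\Lambda_{\ell-1}$ for $\eps'$ small; choosing $C_2$ large enough in terms of $c,\ell,\Lambda_{\ell-1}, h, \Lambda_\ell - h$, the $L_1$-bracket stays above a positive constant, so $L_1 - h|\nabla u_\eps| \geq \bar\delta_1 > 0$ on $\Omega_R$. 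For $L_2$ under $h < \Lambda_\ell$, a case split on the sign of $\Lambda_\ell^2 - c$---using monotonicity of the Riccati flow near the fixed points when $\Lambda_\ell^2 \geq c$, and the estimate $\int_{-\Lambda_\ell}^{-h} d\psi/(c-\psi^2) \geq (\Lambda_\ell - h)/c > R$ when $\Lambda_\ell^2 < c$---shows $\psi_\ell^{\eps'}(r_0) + h$ is uniformly bounded above by a negative constant on $[0,R]$ for $\eps'$ small, producing $L_2 - h|\nabla u_\eps| \geq \bar\delta_2 > 0$; setting $\bar\delta := \min(\bar\delta_1,\bar\delta_2)$ closes the case $h < \Lambda_\ell$. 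In the critical case $h = \Lambda_\ell$ with $\Lambda_\ell^2 \geq c$, continuous dependence of the Riccati flow on initial data gives $\psi_\ell^{\eps'}(r_0) + \Lambda_\ell \leq K(c,R)\eps'$ on $[0,R]$, so shrinking $\eps'$ depending on $\eps$ and $x_0$ yields $L_2 - h|\nabla u_\eps| \geq -\eps$, which is the barrier-sense inequality with $\bar\delta = 0$.

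\textbf{Main obstacle.} The delicate step will be the mixed-angle situation $\theta \in (0,\pi/2)$: it forces a convex combination of the $\PP_\ell^-$ and $\PP_{\ell-1}^-$ barrier information on $\II_{\partial\Omega}$, and this is only possible if \emph{one and the same} supporting hypersurface witnesses both inequalities, since otherwise the two trace bounds on $\nabla^2\bar r$ would involve distinct distance functions and could not be combined. This is precisely why Definition \ref{def_barrier}(ii) is stated in the stronger form, and why the hypothesis $\PP_{\ell-1}^-[\II_{\partial\Omega}]\geq \Lambda_{\ell-1}$ is needed here even though it does not appear in the statement of Theorem \ref{omori-yau_vari}.
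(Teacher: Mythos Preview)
Your proposal is correct and follows essentially the same route as the paper's proof: the same exponential ansatz $u=e^{-C_2 r}$, the same barrier $u_\eps=e^{-C_2\bar r}$ built from a single supporting hypersurface witnessing both $\PP_\ell^-$ and $\PP_{\ell-1}^-$ bounds (after Lemma~\ref{lem_noncut}), the same angular decomposition of $\mcal W$ relative to $\nabla\bar r$, and the same application of Proposition~\ref{prop_comparison} at levels $\ell-1$ and $\ell$ to bound the two resulting traces. The only cosmetic difference is in the case $\Lambda_\ell^2<c$, where your Riccati arrival-time estimate $\int_{-\Lambda_\ell}^{-h}\tfrac{d\psi}{c-\psi^2}\ge\tfrac{\Lambda_\ell-h}{c}>R$ replaces the paper's derivative bound $(\tau^\eps)'\le c$, but both yield the same constraint on $R$.
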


\begin{proof}
Let $r(\cdot) = \dist(\cdot, \partial\Omega)$ be the signed distance function from $\partial \Omega$, and define $u = \eta(r)$ for $\eta(t) \in C^2([0,\infty))$ with $\eta' < 0$ to be chosen later. Let $x \in \Omega_R$, let $y \in \partial \Omega$ be a nearest point to $x$, and let $\gamma_y$ be as in Remark \ref{rem_unique}. Let $S_\eps$ be a supporting hypersurface at $y$ with 
	\[
	x \not\in \cut(S_\eps), \qquad \PP_{\ell-1}^-[\II_{S_\eps}](y) > \Lambda_{\ell-1} -\eps, \quad \PP_{\ell}^-[\II_{S_\eps}](y) > \Lambda_{\ell} -\eps.
	\]
and let $r_\eps = \mathrm{dist}(S_\eps, \cdot)$, $u_\eps = \eta(r_\eps)$. Note that $u_\eps$ is smooth near $x$ and touches $u$ from below. Let $\mcal W \subset T_xM$ be a $\ell$-dimensional subspace generated by the first $\ell$ eigenvectors of $  \nabla^2 u_\eps$ counted with multiplicity, and choose an orthonormal basis $\{v_i\}$ for $\mcal W$, that up to rotation we can arrange to satisfy
$$
\begin{array}{c}
v_1 = (\cos \psi)  \nabla r_\eps + (\sin \psi)e_1, \quad v_i = e_i \ \text{ for } \, i \ge 2, \\[0.1cm]
\text{with } \ \{e_1,\ldots, e_\ell\} \in \gamma_y'(x)^\perp \ \text{ an orthonormal set,} 
\end{array}
$$
for some $\psi \in \R$. From $\nabla^2 u_\eps = \eta''(r_\eps) \di r_\eps \otimes \di r_\eps + \eta'(r_\eps)   \nabla^2 r_\eps$ and $\eta' < 0$ we get, at $x$,
$$
\begin{array}{l}
\ell \Big( \PP_\ell^-[  \nabla^2 u_\eps] - h|  \nabla u_\eps| \Big) ={\rm div}^{\mcal W}(\nabla u_\eps)-\ell h| \nabla u_\eps| = \disp \sum_{i = 1}^\ell   \nabla^2 u_\eps(v_i,v_i) - \ell h|  \nabla u_\eps| \\[0.2cm]
\qquad = \disp \eta''(r_\eps) \sum_{i = 1}^\ell \langle   \nabla r_\eps, v_i\rangle^2  + \eta'(r_\eps)\left[\sin^2\psi   \nabla^2 r_\eps(e_1,e_1) +  \sum_{i=2}^\ell   \nabla^2 r_\eps (e_i,e_i)\right] + \ell h\eta'(r_\eps) \\[0.2cm]
\qquad = \disp \eta''(r_\eps)\cos^ 2\psi + \eta'(r_\eps)\left[\sin^2\psi   \nabla^2 r_\eps(e_1,e_1) +  \sum_{i=2}^\ell   \nabla^2 r_\eps (e_i,e_i)\right] + \ell h \eta'(r_\eps) \\[0.3cm]
\qquad = \disp (\cos^2\psi)\left[\eta''(r_\eps) + \eta'(r_\eps)\left( \ell h + \sum_{i=2}^\ell  \nabla^2 r_\eps(e_i,e_i)\right)\right] \\[0.4cm]
\qquad \ \ \disp + (\sin^2\psi)\eta'(r_\eps)\left(\ell h + \sum_{i=1}^\ell   \nabla^2 r_\eps (e_i,e_i)\right). \\[0.2cm]
\end{array}
$$
By \eqref{riccik_gen} and the comparison Proposition \ref{prop_comparison} (note that \eqref{riccik_gen} implies $\Ric^{(\ell)} \ge -c$), we obtain
\begin{equation}\label{bonitas}
\begin{array}{rcl}
\disp \frac{1}{\ell-1}\sum_{i=2}^\ell   \nabla^2 r_\eps (e_i,e_i) & \le & \disp \frac{\bar \tau^\eps \cn_c(r) + c \, \sn_c(r)}{\cn_c(r) + \bar \tau^\eps \sn_c(r)} : = \bar \tau^\eps(r), \\[0.3cm]
\disp \frac{1}{\ell}\sum_{i=1}^\ell   \nabla^2 r_\eps (e_i,e_i) & \le & \disp \frac{\tau^\eps \cn_c(r) + c  \, \sn_c(r)}{\cn_c(r) + \tau^\eps \sn_c(r)} : = \tau^\eps(r),
\end{array}
\end{equation}
where 
\begin{equation}\label{def_varioustau}
\bar \tau^\eps = - \Lambda_{\ell-1} + \eps, \qquad \tau^\eps = - \Lambda_\ell + \eps. 
\end{equation}
We first observe that the partial derivatives $f_\tau, f_t$ of the function
$$
f(\tau,t) = \frac{\tau \, \cn_c(t) + c \, \sn_c(t)}{\cn_c(t) + \tau \,\sn_c(t)}
$$
satisfy 
\begin{equation}\label{derivatives}
f_\tau(\tau,t) > 0, \qquad f_t(\tau,t) = \frac{c-\tau^2}{(\cn_c(t) + \tau \, \sn_c(t))^2},
\end{equation}
from which we deduce
$$
\bar \tau^\eps(r) \le \max\Big\{ \bar \tau^\eps, \sqrt{c_+}\Big\} = \max\Big\{ - \Lambda_{\ell-1} + \eps, \sqrt{c_+}\Big\}.
$$
Therefore, by \eqref{bonitas}, there exists a constant $C_1 : = \ell h + (\ell-1)\max\big\{ - \Lambda_{\ell-1} + 1, \sqrt{c_+}\big\}$ such that, for every $\eps \in (0,1)$,
$$
\ell h + \sum_{i=2}^\ell  \nabla^2 r_\eps(e_i,e_i) \le C_1,
$$
and since $r_\eps = r$ at $x$,
$$
\begin{array}{lcl}
\ell \big( \PP_\ell^-[  \nabla^2 u_\eps] - h|\nabla u_\eps|\big) & \ge & \disp (\cos^2\psi)\left[\eta''(r) + C_1\eta'(r)\right] + (\sin^2\psi)\eta'(r)\ell \big(h + \tau^\eps(r)\big).
\end{array}
$$
For $\delta \ge 0$ to be chosen later, set $\eta(t) = \exp\big\{ -(C_1+\delta)t \big\}$. Then, $\eta'<0$ and $\eta''= -(C_1+\delta)\eta'$, therefore 
\begin{equation}\label{eq_gradient}
|  \nabla u_\eps| = (C_1+ \delta)u_\eps : = C_2 u_\eps. 
\end{equation}
If $x$ is a point where $u$ is differentiable, from \eqref{eq_gradient} and since $u_\eps$ touches $u$ from below at $x$ we deduce $|\nabla u(x)| = C_2 u(x)$, showing the validity of \eqref{proprie_u}. Furthermore, 
\begin{equation}\label{bellina}
\ell\big( \PP_\ell^-[ \nabla^2 u_\eps] - h|  \nabla u_\eps|\big) \ge \eta'(r) \left[ -(\cos^2\psi)\delta  + (\sin^2\psi)\ell\left(h + \tau^\eps(r)\right)\right] \qquad \text{at } \, x.
\end{equation}
To estimate the term between square brackets, we restrict to $x \in \Omega_R$ and split into cases. 

\begin{itemize}
\item[-] Case $(\mathbb{A}_1)$: $\Lambda_{\ell}^2 > c$. For $\eps$ small enough, $\tau^\eps = -\Lambda_\ell + \eps$ satisfies $(\tau^\eps)^2 > c$, thus by \eqref{derivatives} we get $(\tau^\eps)'(t) = f_t(\tau^\eps,t) \le 0$. Hence, $\tau^\eps(r) \le - \Lambda_\ell + \eps$. If $h < \Lambda_\ell$, we choose $\delta := \ell\frac{\Lambda_\ell -h}{2}$, $\eps < \frac{\Lambda_\ell -h}{2}$ to deduce from \eqref{bellina} the inequality
\begin{equation}\label{bellina_caseA}
\begin{array}{lcl}
\disp \PP_\ell^-[  \nabla^2 u_\eps] - \ell h|  \nabla u_\eps| & \ge & \disp \ell^{-1} \eta'(r) \left[ -(\cos^2\psi)\delta  + (\sin^2\psi)\ell\left(h - \Lambda_\ell + \eps\right)\right] \\[0.2cm]
 & \ge & \disp \ell^{-1}\eta'(r) \left[ -(\cos^2\psi)\delta  - (\sin^2\psi)\delta \right] \\[0.2cm]
& \ge & \disp \ell^{-1}\Big(-\sup_{[0,R]}\eta'\Big)\delta = \ell^{-1}(C_1 + \delta)e^{-(C_1 + \delta)R}\delta : = \bar \delta > 0.
\end{array}
\end{equation}
as claimed. On the other hand, if $h = \Lambda_\ell$ we choose $\delta = 0$ and obtain 
	\begin{equation}\label{bellina_casezero}
	\PP_\ell^-[  \nabla^2 u_\eps] - h|  \nabla u_\eps| \ge \eta'(r)\eps \ge - C_2\eps,
	\end{equation}
Proving \eqref{proprie_u_sol}.
\item[-] Case $(\mathbb{A}_2)$: $\Lambda_\ell^2 = c$. In this case, $\tau^\eps = - \sqrt{c} + \eps$, and 
	\[
	f_t(\tau^\eps, t) = \frac{2\eps \sqrt{c} - \eps^2}{(\cn_c(t) + \tau^\eps \sn_c(t))^2} \quad \left\{ \begin{array}{ll}
\le 0 & \quad \text{if } \, c = 0, \\[0.4cm]
 	\le \frac{2\eps \sqrt{c}}{(\cn_c(R) - \sqrt{c}\sn_c(R))} < \eps' & \quad \text{if } \, c>0,
	\end{array}\right.
	\]
where $\eps'$ can be chosen as small as we wish, provided that $\eps < \eps_1(c,R)$. Therefore, $\tau^\eps(r) \le -\Lambda_\ell + \eps$ for $c =0$, while $\tau^\eps(r) \le -\Lambda_\ell + \eps + \eps'r$ if $c>0$. In both of the cases, if $h = \Lambda_\ell$, from \eqref{bellina} we readily get
	\[
	\PP_\ell^-[  \nabla^2 u_\eps] - h|  \nabla u_\eps| \ge - C_3\eps - C_4 \eps', 
	\]
for suitable constants $C_3,C_4>0$ independent of $\eps,\eps'$, and the sought is proved. On the other hand, if $h< \Lambda_\ell$, choose again $\delta = \ell\frac{\Lambda_\ell- h}{2}$ and $\eps, \eps'$ small enough to obtain
\begin{equation}\label{bellina_caseA3}
\begin{array}{lcl}
\disp \PP_\ell^-[  \nabla^2 u_\eps] - h|  \nabla u_\eps| & \ge & \disp \ell^{-1}\eta'(r) \left[ -(\cos^2\psi)\delta  + (\sin^2\psi)\ell\left(h -\Lambda_\ell + \eps + \eps'R\right)\right] \\[0.2cm]
& \ge & \ell^{-1}(C_1+\delta)e^{-(C_1+\delta)R} \delta := \bar \delta > 0. 
\end{array}
\end{equation} 
\item[-] Case $(\mathbb{B})$: $\Lambda_\ell^2 < c$. In this case, we first observe that the partial derivative $f_t(\tau,t)$ in \eqref{derivatives} is, for fixed $t \in (0,R)$, strictly increasing when $\tau \in [-\sqrt{c}, \tau^*)$, with $\tau^* = - c \, \sn_c(t)/ \cn_c(t)$, and decreasing when $\tau \in (\tau^*, \infty)$. Hence, $\tau^\eps(t)' \le f_t(\tau^*,t)= c$ and therefore $\tau^\eps(r) \le -\Lambda_\ell + \eps + cR$ on $[0,R]$. Plugging into \eqref{bellina} and choosing $\eps < \frac{\delta}{\ell} : = \frac{\Lambda_\ell - h - cR}{2}$ we deduce
\begin{equation}\label{bellina_caseB}
\begin{array}{lcl}
\disp \PP_\ell^-[  \nabla^2 u_\eps] - h|  \nabla u_\eps| & \ge & \disp \ell^{-1}\eta'(r) \left[ -(\cos^2\psi)\delta  + (\sin^2\psi)\ell\left(h - \Lambda_\ell + \eps + cR\right)\right] \\[0.3cm]
& \ge & \disp \ell^{-1}(C_1+\delta)e^{-(C_1+\delta)R}\delta : = \bar \delta > 0.
\end{array}
\end{equation}
\end{itemize}
This concludes the proof.
\end{proof}

\section{Approximation}\label{sec_approx}

Our next goal is to approximate $u$ in the $C^0$-fine topology by means of smooth solutions of \eqref{proprie_u}, up to reducing $\bar \delta$. To do so, it is useful to observe that $u$ also satisfies the inequality 
\begin{equation}\label{eq_u_enhanced}
\PP_\ell^-[\nabla^2 u] - hC_2 u \ge \bar \delta \ge 0 \qquad \text{in the barrier sense on } \, \Omega_R.
\end{equation}
In particular, the fact that $|\nabla u|$ can be extended continuously to the cut-locus simplifies things considerably. Smooth approximation of functions that satisfy, in a suitable weak sense, some geometrically relevant differential inequalities were thoroughly studied by R. Greene and H. Wu in a series of papers (see in particular \cite{GW}). In \cite{sha,Wu} the authors defined a notion of $\ell$-convexity that is suited for application of Greene-Wu smoothing procedure, and we here adapt their definition to cover our case of interest. 
%
%
%
%

Before we start, we need to introduce some terminology from \cite{GW}. Let $\Omega$ be an open set of $M$. Along of this section $\mathscr{C}$ will denote the sheaf of the germs of continuous functions on $\Omega$, and $\mathscr{S}$ a particular subsheaf of $\mathscr{C}$ that will be specified later. The elements of $\mathscr{S}$ will be denoted by $[f]_p$, where $p\in M$ and $f$ is a continuous function defined in a neighbourhood of $p$ in $M.$ Furthermore, if $\pi|_{\mathscr{S}}:\mathscr{S}\to M$ is the standard projection restricted to $\mathscr{S}$, then we will indicate by $\mathscr{S}_p$ the set $(\pi|_{\mathscr{S}})^{-1}(p).$ Finally, if $V \subset\Omega$ is open, the set of continuous functions $f:V\to\R$ with the property that $[f]_p\in\mathscr{S}_p$ for all $p\in V$ will be indicated by $\Gamma(\mathscr{S},V).$

Fix an open set $\Omega$ in $M$ and a positive function $\beta\in C(\Omega)$. Given $x_0 \in \Omega$, we say that
\begin{equation}\label{eq_fsoluz-Lip}
|\nabla f| < \beta
\end{equation}
in Greene-Wu sense (GW-sense) at $x_0$ if there exists a neighbourhood $V$ of $x_0$ and $\eps>0$ such that
	\[
	\lip(f, V) < \beta(x_0)- \eps, 
	\]
with $\lip(f,V)$ the Lipschitz constant of $f$ on $V$. We say that \eqref{eq_fsoluz-Lip} holds in the GW-sense on $\Omega$ if it holds at every $x_0 \in \Omega$. Also, for $v \in T_xM$, let $\gamma$ be the geodesic issuing from $x$ with velocity $v$, and set
$$
Cf(x,v) = \liminf_{t \ra 0} \frac{f(\gamma(t)) + f(\gamma(-t)) - 2 f(x)}{t^2}, 
$$
Given $1 \le \ell \le m$ and $\kappa \in C(\Omega)$, we say that $f$ solves 
\begin{equation}\label{eq_fsoluz}
\PP_\ell^-[\nabla^2 f] > \kappa
\end{equation}
in GW-sense at $x_0$ if there is a neighbourhood $V$ of $x_0$, and constants $\eps,\eta' >0$, such that for each $x \in V$ and $v_1, \ldots, v_\ell \in T_xM$ satisfying $|\langle v_i, v_j\rangle - \delta_{ij}| < \eps$ it holds   
$$
\frac{1}{\ell} \sum_{j=1}^\ell Cf(x,v_j) > \kappa(x) + \eta'.
$$ 
As above, we say that $f$ solves \eqref{eq_fsoluz} in GW-sense on $\Omega$ if it solves it at every $x_0 \in \Omega$. Clearly, if $f \in C^2(\Omega)$ then \eqref{eq_fsoluz} is satisfied in the pointwise sense on $\Omega$. We next define the following subsheafs $\mathscr{S}^1, \mathscr{S}^2$ and $\mathscr{S}$ by setting:
\[
	\begin{array}{lcl}
	\mathscr{S}_p^1 & = & \disp \Big\{ [f]_p \ : \ f \in \lip_\loc, \ |\nabla f| < \beta \ \text{in a neighbourhood of $p$}\Big\} \\[0.3cm]
	\mathscr{S}_p^2 & = & \disp \Big\{ [f]_p \ : \ f \in \lip_\loc, \ \PP_\ell^-[\nabla^2 f] > \kappa \ \text{in a neighbourhood of $p$}\Big\}.
	\end{array}.
	\] 	
and 
	\[
	\mathscr{S}_p = \mathscr{S}_p^1 \cap \mathscr{S}_p^2.
	\]
\begin{lemma}
$\mathscr{S}$ enjoys:
\begin{itemize}
\item[a)] the maximum closure property: If $[f_1]_p, [f_2]_p\in\mathscr{S}_p$, then $[\max\{f_1,f_2\}]_p\in\mathscr{S}_p;$
\item[b)] the $C^\infty-$stability property: If $K$ is a compact subset of $\Omega$ and $f\in\Gamma(\mathscr{S},\Omega)$, then there exists $\eps>0$ so that, if the $C^2-$norm of $\psi\in C^{\infty}(M)$ in $K$ is smaller than $\eps,$ then $[f + \psi]_p\in\mathscr{S}_p$ for all $p\in K$;
\item[c)] the local approximation property: For every $x\in \Omega$ there exists an open neighbourhood $V \subset \Omega$ of $x$ such that, for every $K \subset V$ compact, for every constant $\delta>0$ and for every $f\in\Gamma(\mathscr{S},V)$ that is $C^\infty$ in a (possibly empty) compact subset $K'$ of $K$, there exists $\tilde{f} \in C^\infty(V)$ such that $\tilde{f}\in\Gamma(\mathscr{S},V)$, $|\tilde{f}-f|<\delta$ on $K$, and $\tilde{f}$ is $\delta-$close to $f$ in the $C^\infty$-topology on $K'.$ 
\end{itemize}
\end{lemma}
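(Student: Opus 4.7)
The plan is to verify the three properties in turn, the first two being essentially elementary while (c) requires the Greene-Wu smoothing machinery \cite{GW} adapted to the sheaf $\mathscr{S}$.

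For (a), I would split into the two defining conditions of $\mathscr{S}$. The Lipschitz bound passes through the $\max$ operation since $\lip(\max\{f_1,f_2\}, V) \le \max\{\lip(f_1, V), \lip(f_2,V)\}$, so the strict inequality $|\nabla \cdot | < \beta$ survives, with gap equal to the smaller of the two. For the $\PP_\ell^-$ part, the key observation is that if $f_1(p) = f_2(p)$ then $f := \max\{f_1,f_2\} \ge f_i$ locally with equality at $p$, so that $Cf(p,v) \ge Cf_i(p,v)$ along every geodesic direction $v$; if $f_1(p) \ne f_2(p)$, the max coincides with one of the $f_i$ in a neighbourhood of $p$. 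Either way, the strict GW-inequality is inherited.

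For (b), stability under a small $C^2$ perturbation $\psi$ follows from $|\nabla(f+\psi)| \le |\nabla f| + |\nabla\psi|$ and the identity $C(f+\psi)(x,v) = Cf(x,v) + \nabla^2\psi(v,v)$, valid because $\psi \in C^2$. Hence $\frac{1}{\ell}\sum_j C(f+\psi)(x,v_j) \ge \frac{1}{\ell}\sum_j Cf(x,v_j) - \|\nabla^2\psi\|_{L^\infty(K)}$. A standard covering argument on the compact set $K$ shows that the GW-gaps $\eps, \eta'$ associated to $f$ at each point of $K$ are bounded below by a positive constant, hence a $C^2$-perturbation of sufficiently small norm leaves both strict inequalities intact.

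For (c), which is the heart of the lemma, I would work in a geodesically convex normal coordinate chart $V$ around $x$ and smooth $f$ by geodesic convolution: for a standard symmetric mollifier $\rho$ on the unit ball of $T_yM$ and small $\eta > 0$, set
\[
\tilde f_\eta(y) = \int_{T_yM} f\bigl(\exp_y(\eta v)\bigr)\, \rho(|v|)\, dv.
\]
The Lipschitz condition is preserved by a direct estimate of $|\nabla \tilde f_\eta|$. For the second-order inequality, the crucial algebraic fact is that $A \mapsto \PP_\ell^-[A]$ is \emph{concave} on symmetric endomorphisms, being the infimum of the linear functionals $\frac{1}{\ell}\Tr_{\mcal W}$ over all $\ell$-planes $\mcal W$. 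A Jensen-type application of this concavity to the average defining $\tilde f_\eta$, combined with the definition of the generalized Hessian $Cf$, will yield $\PP_\ell^-[\nabla^2 \tilde f_\eta](y) > \kappa(y)$ pointwise on $V$ for $\eta$ sufficiently small, absorbing into the GW-gap $\eta'$ the $O(\eta)$ curvature corrections arising from normal coordinates. To guarantee $C^\infty$-closeness to $f$ on the subset $K'$ where $f$ is already smooth, I would interpolate between $\tilde f_\eta$ and $f$ using a cut-off that is $1$ near $K'$ and $0$ outside a small neighbourhood, appealing to the $C^\infty$-stability of (b) to ensure the gluing does not destroy the strict inequalities. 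The main obstacle will be the careful quantitative control of the curvature error terms in the Jensen step and the coupling between the mollification parameter $\eta$ and the cut-off scale in the patching step; this, however, is a routine adaptation of the Greene-Wu scheme used in \cite{sha, Wu} for related notions of $\ell$-convexity.
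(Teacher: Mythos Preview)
Your proposal is correct and follows essentially the same Greene--Wu Riemannian convolution strategy as the paper, which simply cites the relevant lemmas in \cite{GW-1,Wu} for $\mathscr{S}^1$ and $\mathscr{S}^2$ separately and then observes that, since both approximations are produced by the \emph{same} Riemannian convolution, the resulting $\tilde f$ lies in $\Gamma(\mathscr{S}^1,V)\cap\Gamma(\mathscr{S}^2,V)=\Gamma(\mathscr{S},V)$. One minor simplification: the cut-off interpolation you propose for the $C^\infty$-closeness on $K'$ is unnecessary, since the Riemannian convolution $\tilde f_\eta$ already converges to $f$ in $C^\infty$ on compact subsets of the region where $f$ is smooth.
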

\begin{proof}
The proof of this fact is an adaptation of the one given by \cite{GW,GW-1,Wu}. Items a) and b) follow immediately from the definition. The proof that $\mathscr{S}^1$ satisfies c) is essentially Lemma 8 in \cite{GW-1}, while a straightforward modification in the proof of Lemma 2 in \cite{Wu} proves that $\mathscr{S}^2$ satisfies c). For both of the sheaves $\mathscr{S}^1$ and $\mathscr{S}^2$, the local approximation property is achieved by means of Riemannian convolution, so the approximating functions $\tilde{f}$ in $\Gamma(\mathscr{S}^1,V)$ and $\Gamma(\mathscr{S}^2,V)$ can be chosen to be the same. Consequently, $\mathscr{S}$ satisfies c).
\end{proof}

As a direct consequence of the above lemma and Corollary 1 of Theorem 4.1 in \cite{GW}, we have the following smoothing theorem:

\begin{theorem}\label{C-infty-appro}
Given $f\in\Gamma(\mathscr{S},\Omega)$ and a positive continuous function $\xi$ on $\Omega$, there exists $\tilde{f} \in C^\infty(\Omega) \cap \Gamma(\mathscr{S},\Omega)$ such that $|\tilde{f}-f|<\xi.$
\end{theorem}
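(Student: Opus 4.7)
The plan is to invoke the general Greene--Wu smoothing machinery as a black box: Corollary 1 of Theorem 4.1 in \cite{GW} states that if a subsheaf $\mathscr{S}$ of the sheaf $\mathscr{C}$ of germs of continuous functions satisfies the maximum closure, $C^{\infty}$-stability, and local approximation properties, then every section $f \in \Gamma(\mathscr{S}, \Omega)$ can be approximated in the $C^0$-fine topology (i.e. within any prescribed positive continuous error $\xi$) by a section $\tilde f \in \Gamma(\mathscr{S}, \Omega)$ that is moreover globally $C^\infty$. The preceding lemma has already established that the specific sheaf $\mathscr{S} = \mathscr{S}^1 \cap \mathscr{S}^2$ defined above satisfies exactly these three hypotheses.

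Therefore, the first step of the proof is simply to recall the content of properties (a), (b), (c) verified in the preceding lemma, and cite the Greene--Wu corollary to conclude. The mechanism inside \cite{GW} that does the work proceeds, roughly, as follows. One takes a locally finite cover $\{V_\alpha\}$ of $\Omega$ by neighbourhoods on which the local approximation property (c) provides smooth sections $\tilde f_\alpha \in \Gamma(\mathscr{S}, V_\alpha) \cap C^\infty(V_\alpha)$ that are $C^0$-close to $f$. These local pieces are then glued together using a \emph{smoothed maximum} (a regularization of $\max$ with parameter depending on the desired accuracy), rather than a standard partition of unity: the maximum closure property (a) ensures that the exact pointwise maximum of two sections is again a section, and the $C^\infty$-stability property (b) ensures that the smoothed maximum, which differs from the honest maximum by a small $C^2$-perturbation, still belongs to $\Gamma(\mathscr{S}, \cdot)$. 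Iterating the smoothed maximum over the cover in the Greene--Wu fashion yields the required global smooth $\tilde f$.

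The only nontrivial expectation here is that the preceding lemma is correctly formulated so that its property (c) gives joint approximation in $\mathscr{S}^1$ and $\mathscr{S}^2$ simultaneously; this is the point emphasised at the end of the lemma's proof (the same Riemannian convolution works for both subsheaves), and it is exactly what is needed so that the output $\tilde f$ belongs to $\Gamma(\mathscr{S}, \Omega)$ and not just to $\Gamma(\mathscr{S}^1,\Omega) \cup \Gamma(\mathscr{S}^2,\Omega)$. Given that, there is no further obstacle: the theorem follows by direct citation, and I would expect the main subtlety in writing it out to be the careful tracking of the constants $\varepsilon, \eta'$ appearing in the GW-sense definitions, to make sure that the smoothed maximum does not erode the strict inequalities $|\nabla f| < \beta$ and $\PP_\ell^-[\nabla^2 f] > \kappa$ below the thresholds. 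This is handled by choosing the regularization parameter in the smoothed maximum much smaller than the margins $\varepsilon, \eta'$ guaranteed by property (c).
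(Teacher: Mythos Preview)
Your proposal is correct and matches the paper's approach exactly: the paper states the theorem as a direct consequence of the preceding lemma (establishing properties (a), (b), (c) for $\mathscr{S}$) together with Corollary 1 of Theorem 4.1 in \cite{GW}, with no further argument given. Your additional explanation of the smoothed-maximum mechanism and the joint approximation point is accurate but goes beyond what the paper records.
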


Our next goal is to prove that the function $u$ of Proposition \ref{prop_constr_u} lies in $\Gamma(\mathscr{S},\Omega_R).$ This is not obvious since, to our knowledge, the relations between solving \eqref{eq_fsoluz} in the barrier and GW sense have not been fully clarified. The problem has been addressed by H. Wu in \cite{Wu-complex,Wu}, and to apply his results, we need to assume a further condition on $\partial \Omega$.
 
\begin{definition}\label{def_locbouben}
An open subset $\Omega \subset M$ whose boundary satisfies 
	\begin{equation}\label{eq_locbou}
	\PP_{\ell-1}^-[\II_{\partial \Omega}] \ge \Lambda_{\ell-1}, \qquad \PP_\ell^-[\II_{\partial \Omega}] \ge \Lambda_\ell 	
	\end{equation}
in the barrier sense, for some continuous functions $\Lambda_{\ell-1}, \Lambda_\ell$ on $M$, is said to have \emph{locally bounded bending from outwards} if, for every compact set $A \subset \partial \Omega$ and $\eps>0$, there exists a constant $C_{A,\eps}$ such that every $y \in A$ admits a supporting hypersurface $S_y$ with 
	\begin{equation}\label{prop_locbouben}
	\begin{array}{c}
	\PP_{\ell-1}^-[\II_{S_y}](y) \ge \Lambda_{\ell-1}(y) - \eps, \qquad \PP_{\ell}^-[\II_{S_y}](y) \ge \Lambda_{\ell}(y) - \eps	\\[0.2cm]
	\text{and} \qquad - C_{A,\eps} \le \lambda_i(\II_{S_y}) \le C_{A,\eps} \quad \forall \, i \in \{1,\ldots, m-1\}.
	\end{array}
	\end{equation}
\end{definition}

\begin{remark}
\emph{Evidently, every boundary of class $C^2$ satisfying \eqref{eq_locbou} also has locally bounded bending from outwards, but the class is more general, as it includes, for instance, cones of the type 
	\[
	\Omega = \Big\{ x \in \R^m \ : \ \langle x ,v \rangle \ge |x| \cos \theta  \Big\},  
	\]
for fixed unit vector $v$ and angle $\theta \in (0, \pi/2)$. Indeed, $\II_{\partial \Omega}(y)$ has eigenvalues $0$ in direction $y$ and $(\cot \theta/|y|)$ on $y^\perp$. Hence, for every $\delta > 0$,  
	\begin{equation}\label{eq_locbou-cone}
		\PP_{\ell-1}^-[\II_{\partial \Omega}] \ge \frac{(\ell-2)\cot \theta}{|y|+ \delta}, \qquad \PP_\ell^-[\II_{\partial \Omega}] \ge \frac{(\ell-1)\cot\theta}{|y|+ \delta}	
	\end{equation}
in the barrier sense, and $\Omega$ has locally bounded bending from outwards, with constant $C_{A,\delta}$ both depending on $A$ and $\delta$.	
}
\end{remark}

The conclusion $u \in \Gamma(\mathscr{S},\Omega_R)$ will be a consequence of Wu's results and of the following compactness lemma: 

\begin{lemma}\label{lem_compactness}
Suppose that $\Omega$ satisfies \eqref{eq_locbou} and has locally bounded bending from outwards. Fix $x_0 \in \Omega$ and balls $B_{R_j}$, $j \in \{1,2\}$ centered at $x_0$ with radii $R_j$ such that
	\[
	R_1 < \mathrm{dist}(x_0, \partial \Omega), \qquad R_2 \ge 2R_1 + \mathrm{dist}(x_0, \partial \Omega). 
	\]
Then, for every $\eps_0 > 0$, there exists a constant $C$ depending on $\eps_0, R_1,R_2$, on the geometry of $B_{R_2}$, and on the constant $C_{A,\eps_0}$ of the set $A = \partial \Omega \cap \overline{B}_{R_2}$ guaranteed by the locally bounded bending property, such that the following holds: for every $x \in B_{R_1}$ and every nearest point $y \in \partial \Omega$ to $x$, there exists a supporting hypersurface $S_y$ at $y$ such that 
	\begin{equation}\label{prprp}
	\PP_{\ell-1}^-[\II_{S_y}](y) \ge \Lambda_{\ell-1}(y) - \eps_0, \qquad \PP_{\ell}^-[\II_{S_y}](y) \ge \Lambda_{\ell}(y) - \eps_0	
	\end{equation}
and whose corresponding distance $r_y = \mathrm{dist}(S_y, \cdot)$ satisfies
	\begin{equation}\label{twobounds_hessian}
	x \not \in \cut(S_y), \qquad  -C \le \nabla^2 r_y(x) \le C.
	\end{equation} 
\end{lemma}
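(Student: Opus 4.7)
The strategy is to first use the locally bounded bending hypothesis to produce a supporting hypersurface $S_y^0$ at $y$ with uniformly controlled second fundamental form, then to refine it via the $\eps$-bending construction of Lemma~\ref{lem_noncut} so that $x \notin \cut(S_y)$, and finally to estimate $\nabla^2 r_y(x)$ via the Riccati equation \eqref{eq_riccati} along the minimizing geodesic from $y$ to $x$.

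First I will verify that $y$ lies in the compact set $A = \partial\Omega \cap \overline{B}_{R_2}$: since $y$ is a nearest point to $x$,
$\dist(y,x_0) \le \dist(y,x) + \dist(x,x_0) = \dist(x,\partial\Omega) + \dist(x,x_0) < (R_1 + \dist(x_0,\partial\Omega)) + R_1 \le R_2$, so $y \in A$. Applying the locally bounded bending property to $A$ with parameter $\eps_0/2$, I obtain a supporting hypersurface $S_y^0$ at $y$ satisfying \eqref{prprp} with slack $\eps_0/2$ and $|\lambda_i(\II_{S_y^0}(y))| \le C_{A,\eps_0/2}$ for every $i$. If $x \notin \cut(S_y^0)$ I take $S_y := S_y^0$; otherwise I apply the $\eps$-bending construction inside the proof of Lemma~\ref{lem_noncut} for some $\eps \in (0,\eps_0/2)$ small enough to guarantee $x \notin \cut(S_\eps')$, and set $S_y := S_\eps'$. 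Since the bending modifies $\II$ at $y$ only by the scalar $-\eps g$, the $\PP^-$ inequalities of \eqref{prprp} persist with full slack $\eps_0$, and the eigenvalues of $\II_{S_y}(y)$ remain bounded by $K_0 := C_{A,\eps_0/2} + \eps_0/2$.

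By Remark~\ref{rem_unique} and the choice of $S_y$, the signed distance $r_y = \dist(S_y,\cdot)$ is smooth in a neighbourhood of the unique minimizing geodesic $\gamma:[0,r(x)] \to M$ from $y$ to $x$, whose image lies in $\overline{B}_{R_2}$ and whose length is at most $T_{\max} := R_1 + \dist(x_0,\partial\Omega)$. On this interval, the matrix $B(t)$ representing the $(1,1)$-Hessian of $r_y$ along $\gamma$ restricted to $\gamma'^\perp$ solves the Riccati equation \eqref{eq_riccati} with $|B(0)| \le K_0$, while $R_\gamma$ is controlled by the sectional curvature bound on the compact set $\overline{B}_{R_2}$ (part of the ``geometry of $B_{R_2}$''). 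Since $x \notin \cut(S_y)$, $B$ has no focal singularity on $[0,r(x)]$, and Rauch--Riccati comparison then provides two-sided matrix bounds $-CI \le B(r(x)) \le CI$ depending only on $K_0$, $T_{\max}$ and the curvature bound. Combined with the identity $\nabla^2 r_y(\nabla r_y,\cdot) = 0$ coming from $|\nabla r_y|\equiv 1$, this gives \eqref{twobounds_hessian}.

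The most delicate point is the last step: the scalar Riccati comparison ODEs of the form $b' + b^2 \pm \kappa_0 = 0$ can develop finite-time singularities even though the actual $B$ remains smooth on $[0,r(x)]$, so one has to verify that the comparison solutions with initial data $|b(0)| \le K_0$ stay finite up to $t = T_{\max}$, a condition involving only the a priori data $K_0, \kappa_0, T_{\max}$. Should a direct comparison prove delicate in some regime, one can fall back on a contradiction-and-compactness argument: supposing $\|\nabla^2 r_{y_n}(x_n)\| \to \infty$ along a sequence $x_n \in B_{R_1}$, extract convergent subsequences $x_n \to x_\infty \in \overline{B}_{R_1}$, $y_n \to y_\infty \in A$, and note that by a further small outward bending at $y_\infty$ one exhibits a supporting hypersurface for which $x_\infty$ lies outside the cut locus, contradicting the assumed blow-up through the focal-point analysis already performed in Lemma~\ref{lem_noncut}.
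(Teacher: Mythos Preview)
Your overall strategy matches the paper's, but the lower Hessian bound has a genuine gap. The direct Riccati comparison you invoke does \emph{not} yield a uniform lower bound: the comparison ODE $\underline{b}' + \underline{b}^2 + \kappa_0 = 0$, $\underline{b}(0) = -K_0$, blows up to $-\infty$ at a finite time $T^* = T^*(K_0,\kappa_0)$ that is in general strictly smaller than $T_{\max}$ (already $T^* \le \pi/(2\sqrt{\kappa_0})$ when $K_0=0$, regardless of how large $T_{\max}$ is). So the ``verification'' you mention is not a technicality to be checked --- it simply fails, and your fallback compactness sketch must carry the full weight of the argument.

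That sketch is incomplete on two counts. First, you bend $S_y^0$ by an $\eps$ that depends on the pair $(x,y)$; along a contradicting sequence one may have $\eps_n \to 0$, and then the second-variation inequality from Lemma~\ref{lem_noncut}, namely $\partial_s^2 E \le -\eps_n + \eps_n^2 \int |\bar V_{\eps_n}|^2$, degenerates to $\partial_s^2 E \le 0$ in the limit and no longer contradicts the minimality of $\gamma_\infty$. The paper avoids this by fixing a single $\eps$ \emph{uniformly} at the outset, chosen small relative to the Rauch upper bound $C_2$ on $|J^\eps z|$ and to $R_2$, so that $\partial_s^2 E \le -\eps + \eps^2 C_2^2 R_2 < -\eps/2$ survives the limit. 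Second, your sentence ``a further small outward bending at $y_\infty$ exhibits a supporting hypersurface for which $x_\infty$ lies outside the cut locus'' does not by itself contradict $\|\nabla^2 r_{y_n}(x_n)\| \to \infty$: that blow-up is relative to the original hypersurfaces $S_{y_n}$, not to any new hypersurface built at $y_\infty$. The paper closes this correctly by translating Hessian blow-up into vanishing of the limit Jacobi tensor, $J^\eps_\infty(T_\infty)z_\infty = 0$, and then --- because the limit of the $\II_{S_{y_n}}$ need not correspond to an actual supporting hypersurface at $y_\infty$ --- transplanting the length-decreasing variation back onto $\gamma_n$ for $n$ large, where $S_{y_n}$ is available.
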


\begin{proof}
Our restrictions on $R_1,R_2$ only serve to guarantee that every nearest point in $\partial \Omega$ to a point in $B_{R_1}$ lies in $B_{R_2}$. Let $\mathscr{F}$ be the family of triples $(\gamma, S,J)$, with $\gamma : [0, r(x)] \ra M$ a segment from $\partial \Omega$ to a point $x \in B_{R_1}$, $S$ a supporting hypersurface at $\gamma(0)$ that satisfies \eqref{prop_locbouben}, and $J$ the Jacobi tensor along $\gamma$ subjected to the initial conditions $J(0) = I$, $J'(0) = - \II_S$. Note that, for each $v$ parallel, $Jv$ generates a variation that is tangent to $S$ at time $0$ (we shortly say that $J$ issues from $S$). Consider a positive $\eps << \eps_0$ to be chosen later, only depending on $\eps_0, R_1,R_2, C_{A, \eps_0}, \overline{B}_{R_2}$. To each $(\gamma,S,J) \in \mathscr{F}$ we associate a triple $(\gamma, S^\eps, J^\eps)$, with $S^\eps$ be obtained by bending $S$ outwards by a factor $\eps$, as in the proof of Lemma \ref{lem_noncut}, and $J^\eps$ be the Jacobi tensor issuing from $S^\eps$, whose initial derivative is $-\II_{S}+\eps I$. By Lemma \ref{lem_noncut}, the ending point $x$ of $\gamma$ is not in the cut-locus of $S$, so the associated distance $r_\eps$ is smooth near $x$ and $J^\eps$ is invertible on $[0, r(x)]$. We claim that there exist $C_1,C_2$ depending on $\overline{B}_{R_2}, \eps, C_{A,\epsilon_0}$ such that
	\begin{equation}\label{uplower}
	\begin{array}{l}
	C_1 \le |J^\eps z| \le C_2 \qquad \text{on } \, [0, r(x)], \\[0.2cm]
	\text{for every } \, (\gamma,S,J) \in \mathscr{F} \ \text{ and unit field $z \in (\gamma')^\perp$ parallel along $\gamma$}.
	\end{array}
	\end{equation}
This and the Jacobi equation imply $|(J^\eps)'z| \le C_3$ for some constant $C_3$ with the same dependences as $C_1,C_2$, so the identity
	\[
	\nabla^2 r_\eps\left( \frac{J^\eps z}{|J^\eps z|},\frac{J^\eps z}{|J^\eps z|} \right) = \frac{1}{2} \frac{\di}{\di t} \log |J^\eps z|^2 
	\]
implies the uniform boundedness of $\nabla^2 r_\eps$. The thesis follows, up to choosing $S^\eps$ to be the desired supporting hypersurfaces (indeed, up to replacing $\eps_0$ with, say, $2\eps_0$ in \eqref{prprp}). Setting 
	\[
	c^2_R = \sup_{B_{R_2}} |\Sect|,
	\]
by Rauch comparison theorem
	\[
	|J^\eps z| \le \cosh(c_R r(x)) + \frac{C_{A,\eps_0} + \eps}{c_R}\sinh(c_R r(x)) \qquad \text{on $[0, r(x)$]}, 
	\]
so the upper bound in \eqref{uplower} directly follows from $r(x) \le R_2$. To prove the lower bound, we proceed by contradiction assuming the existence of $(\gamma_j,S_j,J_j) \in \mathscr{F}$, of unit vector fields $z_j$ parallel along $\gamma_j: [0,r(x_j)] \ra M$, and of $T_j \le r(x_j)$, such that $|J_j^\eps(T_j) z_j| \ra 0$ as $j \ra \infty$. Let $x_j \in B_{R_1}$ be the ending point of $\gamma_j$. Up to subsequences, $\gamma_j \ra \gamma$ for some segment $\gamma$ from $y \in \partial \Omega$ to $x \in \overline{B}_{R_1}$, $z_j \ra z$ and $T_j \ra T$. Furthermore, because of the locally bounded bending property, $\II_{S_j} \ra B$ for some $B$ with $- C_{A,\eps_0} I \le B \le C_{A,\eps_0} I$. Therefore, $J_j \ra J$ and $J_j^\eps \ra J^\eps$ in $C^1_\loc(\R)$, with $J, J^\eps$ the Jacobi tensors on $\gamma$ with initial derivatives $-B$ and $-B+\eps I$, and the convergence is thought to be component-wise in $C^1_\loc$ with respect to parallel frames on $(\gamma_j')^\perp$ smoothly converging to a parallel frame for $(\gamma')^\perp$. Hereafter, we identify tensors with their matrix representations in these fixed parallel frames, so for instance $J_j,J : \R \ra \mathfrak{gl}(m-1)$. Passing to the limit, $|J^\eps(T) z|=0$. As in Lemma \ref{lem_noncut}, $\bar V_\eps(t) = J^\eps(t)e^{-\eps t}z$ is a vector field along $\gamma$ that has initial velocity $-Bz$, vanishes at $\gamma(T)$ and (when replaced by zero on $[T, r(x)]$) generates a variation that decreases lengths and fixes $x$. In particular, referring to \eqref{eq_impoperdopo}, 
	\[
	\partial^2_s E(f_s)(0) \le - \eps + \eps^2 C_2^2 R_2 < - \frac{\eps}{2},
	\]
if $\eps$ is small enough. This would lead to the sought contradiction with the fact that $\gamma$ is a segment, provided that we guarantee that the variation $f_s$ can be constructed to issue from a supporting hypersurface at $y$ (with second fundamental form $B$). Although this should not be difficult, we prefer to reach a contradiction by using a variation of $\gamma_j$ for large enough $j$, as the already know the existence of the supporting hypersurface $S_j$. Let therefore $\psi : \R \ra [0,1]$ be a cut-off function with $\psi \equiv 1$ on $\left[0, \frac{T}{2}\right)$ and ${\rm spt } \psi \subset \left[0, \frac{3T}{4}\right)$. Reparametrizing according to $\tau_j(t) = \psi(t) t + (1-\psi(t))\frac{t T}{T_j}$, by continuity the field
	\[
	V_{\eps,j}(t) = \left\{ \begin{array}{ll}
	\disp \Big( (1-\psi)J^\eps + \psi J^\eps_j \Big)(\tau_j(t))e^{-\eps t}z & \quad \text{if } \, t \in [0,T_j], \\[0.3cm]
	0 & \quad \text{if } \, t \in (T_j, \infty)
	\end{array}\right.
	\]
transplanted on $\gamma_j$, generates a variation $f_{s,j}$ that is tangent to $S_j$ (because $V_{\eps, j}'(0) = - \II_{S^\eps_j}z + \eps z = -\II_{S_j}z$), fixes $x_j$ and still satisfies 
	\[
	\partial^2_s E(f_{s,j})(0) = \partial^2_s E(f_{s})(0) + o_j(1) < - \frac{\eps}{2},
	\]
for $j$ large enough, contradicting the fact that $\gamma_j$ is a segment on $[0, r(x_j)]$.
\end{proof}

\begin{proposition}\label{u-good}
In the assumptions of Proposition \ref{prop_constr_u}, if $\partial \Omega$ has locally bounded bending from outwards then $u \in\Gamma(\mathscr{S},\Omega_R)$ with the choices 
	\[
	\beta > C_2 u \ \ \text{on } \, \Omega_R, \qquad  \qquad \kappa(x) = h C_2 u(x) + \left\{ \begin{array}{ll}
	\bar \delta - \eps & \quad \text{if } \, h < \Lambda_\ell, \\[0.2cm]
	- \eps & \quad \text{if } \, h = \Lambda_\ell,
	\end{array}\right. 
	\]
where $\eps$ is any given positive constant. Consequently, for every $0 < t < s \le R$ and $\eps>0$, there exists a $C^\infty$ function $\bar u \in \Gamma(\mathscr{S},\Omega_R)$ such that
\begin{equation}\label{proprie_ut}
\left\{ \begin{array}{ll}
\PP_\ell^-[\nabla^2 \bar u] - h |\nabla \bar u| >  \left\{ \begin{array}{ll}
\bar \delta/2 & \quad \text{on $\Omega_R$, if $h < \Lambda_\ell$,} \\[0.3cm]
- \eps & \quad \text{on $\Omega_R$, if $h = \Lambda_\ell$,}
\end{array}\right. \\[0.6cm]
\disp \limsup_{r(x) \ra s} \bar u(x) < \inf_{\{r \le t\}} \bar u < \sup_{\Omega_R} \bar u < \infty.
\end{array}\right.
\end{equation}
\end{proposition}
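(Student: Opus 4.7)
The plan is to split the argument into two logical steps: first, establish that $u$ lies in $\Gamma(\mathscr{S}, \Omega_R)$ for the stated $\beta$ and $\kappa$; second, apply Theorem \ref{C-infty-appro} with a carefully tuned error $\xi$ to produce $\bar u$. The Lipschitz condition $|\nabla u| < \beta$ in the GW sense at any $x_0 \in \Omega_R$ is almost immediate from \eqref{proprie_u}: since $u = \eta(r)$ is locally Lipschitz with $|\nabla u| = C_2 u$ wherever differentiable, and both $u$ and $\beta$ are continuous with $\beta(x_0) > C_2 u(x_0)$, continuity furnishes a neighbourhood $V$ of $x_0$ and $\mu>0$ with $\mathrm{Lip}(u, V) < \beta(x_0) - \mu$.

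The crux is verifying $\PP_\ell^-[\nabla^2 u] > \kappa$ in the GW sense, where I would follow the strategy of H. Wu in \cite{Wu-complex,Wu}. Fix $x_0 \in \Omega_R$ and apply Lemma \ref{lem_compactness} on a small neighbourhood $V$ of $x_0$, obtaining a uniform $C_{\mathrm{H}}>0$ such that every $x\in V$ admits a nearest point $y\in\partial\Omega$ and a smooth supporting hypersurface $S_y$ satisfying \eqref{prprp} whose associated distance $r_y$ is smooth near $x$ with $|\nabla^2 r_y(x)|\le C_{\mathrm{H}}$. Setting $u_y:=\eta(r_y)$, the computation performed in the proof of Proposition \ref{prop_constr_u} with $S_y$ in place of $S_\eps$ shows that $u_y$ is smooth near $x$, touches $u$ from below with equality at $x$, has Hessian uniformly bounded by some $C'$ on a neighbourhood of $x$, and satisfies
\[
\PP_\ell^-[\nabla^2 u_y](x) - h|\nabla u_y|(x) \ge \bar\delta - C''\eps_0 \quad (\text{resp. } \ge -C''\eps_0)
\]
in case $h<\Lambda_\ell$ (resp. $h=\Lambda_\ell$), where $\eps_0$ is the tolerance from Lemma \ref{lem_compactness}. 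Since $u-u_y\ge 0$ near $x$ with equality at $x$, the definition of $Cf$ gives $Cu(x,v) \ge Cu_y(x,v) = \nabla^2 u_y(x)(v,v)$ for every $v$. Averaging over a near-orthonormal $\ell$-tuple $v_1,\ldots,v_\ell$ with $|\langle v_i,v_j\rangle - \delta_{ij}|<\mu$, the bound $|\nabla^2 u_y|\le C'$ controls the discrepancy between $\tfrac{1}{\ell}\sum_j \nabla^2 u_y(x)(v_j,v_j)$ and $\PP_\ell^-[\nabla^2 u_y](x)$, giving
\[
\frac{1}{\ell}\sum_{j=1}^\ell Cu(x, v_j) \ge \PP_\ell^-[\nabla^2 u_y](x) - C'\mu \ge hC_2 u(x) + \bar\delta - C''\eps_0 - C'\mu.
\]
Choosing $\eps_0,\mu$ small relative to $\bar\delta$ (resp. to the given $\eps$) yields a uniform slack $\eta'>0$ on $V$ and the required GW inequality.

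With $u\in\Gamma(\mathscr{S},\Omega_R)$ established, I would apply Theorem \ref{C-infty-appro} on $\Omega_R$ with a positive bounded continuous $\xi$ satisfying $\xi < \frac{1}{3}(\eta(t)-\eta(s))$ on a neighbourhood of $\{t\le r\le s\}$, exploiting the strict monotonicity of $\eta$. The resulting $\bar u\in C^\infty(\Omega_R)\cap\Gamma(\mathscr{S},\Omega_R)$ automatically obeys $|\nabla\bar u|<\beta$, and its smoothness combined with GW-membership yields the pointwise inequality $\PP_\ell^-[\nabla^2\bar u]>\kappa$. Taking $\beta$ arbitrarily close to $C_2 u$ then produces $\PP_\ell^-[\nabla^2\bar u] - h|\nabla\bar u| > \bar\delta/2$ (resp. $> -\eps$), while the smallness of $\xi$ gives the chain $\limsup_{r(x)\to s}\bar u(x)\le \eta(s)+\sup\xi < \eta(t)-\sup\xi \le \inf_{\{r\le t\}}\bar u < \sup_{\Omega_R}\bar u < \infty$. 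The main obstacle throughout is the barrier-to-GW passage in the second paragraph: the smooth barrier $u_y$ depends on both $x\in V$ and the chosen nearest point $y$, so extracting a uniform slack $\eta'>0$ on $V$ requires precisely the uniform Hessian control on $r_y$ as $x$ varies that Lemma \ref{lem_compactness} supplies, which in turn is built upon the locally bounded bending hypothesis on $\partial\Omega$.
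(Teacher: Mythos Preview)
Your proposal is correct and follows essentially the same route as the paper: verify $u\in\Gamma(\mathscr{S}^1,\Omega_R)$ from \eqref{proprie_u}, combine Lemma~\ref{lem_compactness} with the barrier computation of Proposition~\ref{prop_constr_u} to get $u\in\Gamma(\mathscr{S}^2,\Omega_R)$, then apply Theorem~\ref{C-infty-appro} with suitably small $\xi$ and $\beta=C_2u+\xi$. The only difference is cosmetic: the paper invokes Wu's Proposition~2 in \cite{Wu-complex} (Lemma~3 in \cite{Wu}) as a black box for the barrier-to-GW passage, whereas you unpack that argument explicitly via $Cu(x,v)\ge\nabla^2 u_y(x)(v,v)$ and the uniform Hessian bound. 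One minor point: to secure the strict inequality $\inf_{\{r\le t\}}\bar u<\sup_{\Omega_R}\bar u$ you also need $\xi$ small relative to $\eta(0)-\eta(t)$, not just $\eta(t)-\eta(s)$; the paper handles this by taking $\xi<\tfrac{1}{4}\min\{\eta(0)-\eta(t),\eta(t)-\eta(s)\}$.
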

\begin{proof}
By construction, $u = \eta(r) \in \Gamma(\mathscr{S}^1,\Omega_R)$, so we just need to prove that $u\in \Gamma(\mathscr{S}^2,\Omega_R)$. We apply the compactness Lemma \ref{lem_compactness} on $\Omega$, with balls of suitably chosen radii $R_1,R_2$ so that $\overline{B}_{R_1}\subset \Omega_R$, to deduce that every $x \in B_{R_1}$ has a nearest point $y \in \partial \Omega$, and a supporting hypersurface $S_y$ at $y$, such that the distance $r_y = \mathrm{dist}(S_y, \cdot)$ satisfies 
	\[
	\PP_\ell^-[\nabla^2(\eta\circ r_y)](x) > \kappa(x), \qquad - C \le \nabla^2 (\eta\circ r_y)(x) \le C, 
	\]
for some uniform constant $C$. The two inequalities enable us to repeat verbatim the proof of Proposition 2 in \cite{Wu-complex} (cf. also Lemma 3 in \cite{Wu}) to deduce $u = \eta(r) \in \Gamma(\mathscr{S}^2,\Omega_R)$, as claimed.\par
Next, we recall that $u$ is strictly decreasing as a function of the distance from $\partial\Omega$, so let 
	\[
	\tau_{t,s} = \min \big\{ \eta(0)-\eta(t), \eta(t) - \eta(s)\big\} > 0, 
	\]
choose $0 < \xi, \beta \in C(\overline{\Omega}_R)$ satisfying
	\[
	\xi < \min \left\{ \frac{\tau_{t,s}}{4}, \frac{\bar \delta}{4 h},\right\}, \qquad 
	\beta = C_2 u + \xi
	\]
and let $\bar u \in \Gamma(\mathscr{S},\Omega_R)$ be the smooth approximation of $u$ guaranteed by Theorem \ref{C-infty-appro} with $|u- \bar u| < \xi$ on $\Omega_R$. If $h < \Lambda_\ell$ (the other case being analogous),  
	\[
	\PP_\ell^-[\nabla^2 \bar u] > \frac{3\bar \delta}{4} + h C_2 u(x) > \frac{3\bar \delta}{4} + h(|\nabla \bar u|  - \xi) > \frac{\bar \delta}{2} + h|\nabla \bar u|
	\]
	on $\Omega_R$. Moreover, $|\bar u - u| < \xi < \tau_{t,s}/4$ on $\Omega_R$, that readily implies \eqref{proprie_ut}.
\end{proof}

\section{Maximum principles at infinity for varifolds}\label{sec_maxprinc}

This section is devoted to prove a maximum principle at infinity, and a parabolicity criterion, for varifolds in a complete Riemannian manifold. Our results adapt, to the varifold setting, the proofs of parabolicity and weak maximum principle at infinity via integral estimates obtained, respectively, in Theorems 5.1 and 4.1 of \cite{PRS1}.\\
Let us first recall some basic facts about varifolds, following \cite{simon-book}: let $V$ be an $\ell$-dimensional varifold in $M$, that is, a Radon measure on the Grassmannian $G_\ell(M)$ of $\ell$-planes on $M$. Given a $C^1$ vector field $Z$ compactly supported in an open set $\Omega\Subset M$, the first variation is defined as
	\[
	\delta V(Z) : = \disp \left.\frac{\di}{\di t}\right|_{t=0} ((\Phi_t)_\sharp V)(\Omega) = \int_{G_\ell(\Omega)}{\rm div}^{\mcal{W}}Z {\rm d}V(p,\mcal{W}),
	\]
where $\Phi_t : G_\ell(\Omega) \ra G_\ell(\Omega)$ is induced by the flow of $Z$ in the obvious way, and 
	\[
	{\rm div}^{{\mcal W}}Z =\sum_{i=1}^{\ell}g(\nabla_{e_i}Z,e_i), \qquad \text{with $\{e_i\}$ an orthonormal basis of ${\mcal W}$}. 
	\]
If $V$ has locally bounded first variation, i.e, 
	\[	
|\delta V(Z)|\leq C\sup_M|Z| \qquad  \text{for all $Z$ compactly supported on $\Omega$,}
	\]
then the total variation measure $\|\delta V\|$ is a Radon measure on $M$, where $\|\delta V\|$ is characterized by
\[
\|\delta V\|(\Omega)=\sup_{Z,|Z|\leq1,{\rm spt} Z\Subset\Omega}|\delta V|(Z).
\]
Splitting $\|\delta V\|$ into its absolutely continuous part and its singular part $\sigma$ with respect to the weight measure $\|V\| = \pi_{\sharp} V,$ where  $\pi : G_\ell(M) \ra M$ is the canonical projection, one gets 
	\[
\int_{G_\ell(\Omega)}{\rm div}^{\mcal{W}}Z {\rm d}V(p,\mcal{W})= - \ell \int_{\Omega}\langle {\bf H},Z\rangle{\rm d}\|V\|-\int_{{\rm spt } \sigma}\langle\nu,Z\rangle{\rm d}\sigma.
	\]
We call the vector field ${\bf H} \in L^1_\loc (M, \|V\|)$ the (normalized) mean curvature of $V$, ${\rm spt }\,  \sigma$ the generalized boundary of $V,$ and $\nu: {\rm spt } \, \sigma \ra \mathbb{S}^{m-1}$ the unit co-normal of $V.$ For notational convenience, it is customary to denote with $\|\partial V\|$ the measure $\sigma$, to keep track of the fact that $\sigma$ is a boundary measure related to $V$.\par
Given a varifold $V$ with locally bounded first variation, and given $0 \le h \in L^1_\loc(M, \|V\|)$, observe that the condition
\begin{equation}\label{bounded varifold}
\delta V(Z)+ \ell \int_{M}h|Z|{\rm d} \|V\|\geq 0 \qquad \forall \, Z \ \text{ compactly supported on $M\backslash {\rm spt} \|\partial V\|$} 
\end{equation}
	is equivalent to say that 
	\[
	|{\bf H}| \le h \qquad \text{$\|V\|$-a.e.} \, .
	\]
A $\ell$-dimensional varifold $V$ is called rectifiable if there exists a countably $\ell$-rectifiable set $\Sigma \subset M$ and a function $0 < \theta \in L^1_\loc(\mathscr{H}^\ell \measrest \Sigma)$ that is $\mathscr{H}^\ell$-a.e. positive on $\Sigma$, so that 
\[
V(U)=\int_{\pi(U)\cap \Sigma}\theta(p) {\rm d}\mathscr{H}^\ell(p) \qquad \forall \, U \subset G_\ell(\Omega).
\]
In this case, we write $V=V(\Sigma,\theta)$.	

\begin{theorem}[\textbf{Maximum principle at infinity}]\label{MPV}
Let $(M,g)$ be a complete Riemannian manifold, and suppose that $V$ is a $\ell$-dimensional varifold with locally bounded first variation and normalized mean curvature vector ${\bf H}$ satisfying 
	\[
	|{\bf H}| \le h \qquad \text{$\|V\|$-a.e.}, 
	\]
for some $0 \le h \in L^1_\loc(M, \|V\|)$. Let $u:M\to\R$ be a $C^2$ function so that
\begin{equation}\label{u. cres.}
\hat{u}=\limsup_{p \in {\rm spt} \|V\|, \, r(p)\to\infty}\frac{u(p)}{r(p)^\sigma}< \infty,
\end{equation}
for some constant $\sigma \in [0,2]$, where $r$ is the distance in $M$ from a fixed origin. Admit that for some $\gamma\in\mathbb{R}$ we have 
	\[
	{\rm spt} \|V\|\cap \Omega_\gamma\neq\varnothing, \qquad {\rm spt} \| \partial V \|\cap \Omega_\gamma = \varnothing, 
	\]
where $\Omega_\gamma= \{u> \gamma\}$. Let $\alpha \in \R$ and assume that either
	\begin{eqnarray}
	\alpha < 2-\sigma & \quad \text{and} & \quad \liminf_{r \ra \infty} \frac{\log \|V\|(B_r)}{r^{2-\sigma-\alpha}} := d_0 <\infty, \qquad \text{or} \label{Vol.expo} \\
	\alpha = 2-\sigma & \quad \text{and} & \quad \liminf_{r \ra \infty} \frac{\log \|V\|(B_r)}{\log r} := d_0 <\infty. \label{Vol.poli}
	\end{eqnarray}
Then, 
\begin{equation}\label{main. est.-1}
\disp \|V\|\text{-}{\rm ess}\inf_{\Omega_\gamma} \left\{ [1+r]^\alpha \Big[ \PP_\ell^-[\nabla^2 u]-h|\nabla u|\Big] \right\} \leq \frac{C(\sigma,\alpha,d_0)}{\ell} \max\{\hat{u},0\},
\end{equation}
where, setting
	\[
	{\mathcal I}_{u,\gamma}(V) := \int_{G_\ell(\Omega_\gamma)} |\nabla^{\mathcal W} u|^2 \di V(p,{\mathcal W}), 
	\]
the constant $C(\sigma,\alpha,d_0)$ is defined as follows: 
\begin{enumerate}
\item[a.] if ${\mathcal I}_{u,\gamma}(V) = 0$, then $C=0$; 
\item[b.] if ${\mathcal I}_{u,\gamma}(V) > 0$ and $\alpha<2-\sigma$, 
\[
C(\sigma,\alpha,d_0):=\left\{
\begin{array}{ccccc}
0 &{\rm if}& \sigma=0 \\[0.2cm]
d_0(2-\sigma-\alpha)^2 &{\rm if}& \sigma > 0, & \alpha<2(1-\sigma) \\[0.2cm]
d_0\sigma(2-\sigma-\alpha) &{\rm if}& \sigma > 0, & \alpha\geq 2(1-\sigma);
\end{array}
\right.
\]
\item[b.] if ${\mathcal I}_{u,\gamma}(V) > 0$ and $\alpha=2-\sigma$,
\[
C(\sigma,\alpha,d_0):=\left\{
\begin{array}{ccccc}
\sigma(\sigma+d_0-2) &{\rm if}& \sigma+d_0\geq 2 \\[0.2cm]
0 &{\rm if}& \sigma+d_0<2. \\
\end{array}
\right.\]
\end{enumerate}
\end{theorem}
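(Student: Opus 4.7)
The plan is to argue by contradiction, adapting the integral estimates of Pigola--Rigoli--Setti \cite{PRS1} to the varifold setting. Suppose that for some constant $K > \frac{C(\sigma,\alpha,d_0)}{\ell}\max\{\hat u,0\}$ one has $\PP_\ell^-[\nabla^2 u] - h|\nabla u| \ge K[1+r]^{-\alpha}$ $\|V\|$-a.e.\ on $\Omega_\gamma$. I would test the first variation on vector fields of the form $Z = \Phi(u)\eta^2 \nabla u$, where $\Phi \in C^1(\R)$ is nonnegative, nondecreasing, and vanishes on $(-\infty,\gamma]$, and $\eta \in C^1_c(M)$ is a nonnegative cutoff. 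Because $\supp \|\partial V\| \cap \Omega_\gamma = \varnothing$, the support of $Z$ is disjoint from $\supp \|\partial V\|$, so \eqref{bounded varifold} applies.

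Expanding, for an $\ell$-plane $\mcal W$ with orthonormal basis $\{e_i\}$,
$$
{\rm div}^{\mcal W} Z = \Phi(u)\eta^2 \sum_{i=1}^\ell \nabla^2 u(e_i,e_i) + \Phi'(u)\eta^2 |\nabla^{\mcal W}u|^2 + 2\Phi(u)\eta \langle \nabla^{\mcal W}\eta,\nabla^{\mcal W}u\rangle,
$$
integrating against $V$, invoking the first-variation identity $\int {\rm div}^{\mcal W} Z\,dV = -\ell\int \langle \mathbf{H},Z\rangle d\|V\|$, and using the pointwise bound $\sum_i \nabla^2 u(e_i,e_i) \ge \ell\, \PP_\ell^-[\nabla^2 u]$ on every $\ell$-subspace together with $-\langle \mathbf{H},\nabla u\rangle \le h|\nabla u|$, one obtains
$$
\ell \int \Phi(u)\eta^2 \bigl(\PP_\ell^-[\nabla^2 u] - h|\nabla u|\bigr) d\|V\| + \int \Phi'(u)\eta^2 |\nabla^{\mcal W}u|^2 dV \le -2\int \Phi(u)\eta \langle \nabla^{\mcal W}\eta,\nabla^{\mcal W}u\rangle dV.
$$
A weighted Young inequality on the right, exploiting $\Phi' \ge 0$, absorbs a fraction of the gradient integral into the left-hand side and yields the master estimate
$$
\ell \int \Phi(u)\eta^2\bigl(\PP_\ell^-[\nabla^2 u] - h|\nabla u|\bigr) d\|V\| \le \frac{1}{\eps}\int \frac{\Phi(u)^2}{\Phi'(u)}|\nabla^{\mcal W}\eta|^2 dV
$$
for each $\eps \in (0,1)$. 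Substituting the contradiction hypothesis on the left gives a lower bound proportional to $K$ on a weighted mass of $V$ in $\Omega_\gamma$.

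The next step is to choose $\eta = \chi(r/R)$ for a fixed $[0,1]$-valued cutoff equal to $1$ on $[0,1]$ and $0$ on $[2,\infty)$, so $|\nabla^{\mcal W}\eta| \le C/R$ on $B_{2R} \setminus B_R$, and to pick $\Phi$ adapted to the growth rate $\sigma$: in the polynomial range take $\Phi(t) = (t-\gamma)_+^{p}$ with $p$ tuned to the desired exponents (remaining bounded when $\sigma=0$). The growth condition \eqref{u. cres.} then controls $\Phi(u)^2/\Phi'(u)$ on $B_{2R}$ by a power of $R$ depending on $\hat u$, $p$, $\sigma$, reducing the master estimate to an inequality of the schematic form
$$
K\cdot [1+2R]^{-\alpha}\|V\|\bigl(B_R \cap \{u \ge \gamma+\delta_0\}\bigr) \le C\,R^{\sigma(p+1)-2}\|V\|(B_{2R}),
$$
valid for $R$ large enough that the left-hand mass is positive (which is granted by $\supp \|V\|\cap \Omega_\gamma \neq \varnothing$ and a standard $\delta_0$-approximation). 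Evaluating along a sequence $R_n \to \infty$ realising the $\liminf$ in \eqref{Vol.expo} (respectively \eqref{Vol.poli}), and jointly optimising $p$ and $\eps$, the inequality survives only if $K \le \frac{C(\sigma,\alpha,d_0)}{\ell}\hat u$, contradicting the choice of $K$ and proving \eqref{main. est.-1}. The degenerate case $\mathcal{I}_{u,\gamma}(V) = 0$ is direct: $\nabla^{\mcal W}u = 0$ $V$-a.e.\ on $\Omega_\gamma$ kills both the cross term and the gradient integral in the derivation, so the master estimate collapses to $\PP_\ell^- - h|\nabla u| \le 0$ $\|V\|$-a.e., matching $C = 0$.

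The main obstacle will be the sharp identification of $C(\sigma,\alpha,d_0)$ with the stated three-case formula. Each subcase arises from a different optimal balance of the Young weight $\eps$, the exponent $p$ in $\Phi$, and the scaling of $R$, dictated by the sign of $\alpha - 2(1-\sigma)$ (which governs whether $p$ diverges or stays bounded in the optimisation). The borderline $\alpha = 2-\sigma$ is especially delicate: there the polynomial cutoff scaling is replaced by a logarithmic one, and the admissible regime only opens up once $\sigma + d_0 \ge 2$, which is precisely why the expression $\sigma(\sigma + d_0 - 2)$ appears.
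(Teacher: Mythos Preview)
Your general strategy (test the first variation on a vector field of the form $\text{weight}\cdot\nabla u$, expand, absorb the cross term, and compare against the volume growth along a subsequence) is indeed the skeleton of the paper's proof. The degenerate case $\mathcal I_{u,\gamma}(V)=0$ is also handled correctly. However, the specific choice $\Phi=\Phi(u)$, with $\Phi(t)=(t-\gamma)_+^{p}$, is the source of a genuine gap.

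With that choice, after your Young inequality the right-hand side is controlled on the annulus $B_{2R}\setminus B_R$ by $\frac{1}{\eps p}(u-\gamma)^{p+1}R^{-2}\|V\|(B_{2R})$, and since $u\lesssim \hat u\,R^\sigma$ this is of order $R^{\sigma(p+1)-2}\|V\|(B_{2R})$. In the regime \eqref{Vol.expo}, $\|V\|(B_{2R})$ is allowed to grow like $\exp\big(d(2R)^{2-\sigma-\alpha}\big)$, so the right-hand side diverges for \emph{every} choice of $p$, and no contradiction is obtained. An exponential choice $\Phi(t)=e^{qt}$ does not help either: $\Phi^2/\Phi'=e^{qu}/q\le e^{qbR^\sigma}/q$ on the annulus, and this matches $e^{dR^{2-\sigma-\alpha}}$ only on the borderline $\alpha=2(1-\sigma)$, not throughout the ranges in cases (b). The missing idea is that the weight must couple $u$ with $r$. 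The paper sets $v=\beta(1+r)^\sigma-u$ (so $(\beta-b)(1+r)^\sigma\le v\le \beta(1+r)^\sigma$ on $\Omega_\gamma$) and takes the weight $F$ to be, respectively, $\exp(-qv(1+r)^{-\eta})$, $\exp(-qv^{(\sigma-\eta)/\sigma})$, or $v^{-q}$, with $\eta=\alpha+2(\sigma-1)$. This makes $F$ decay on the annulus at the rate $\exp(-c\,R^{2-\sigma-\alpha})$ (or $R^{-q\sigma}$ in the polynomial case), precisely cancelling the volume growth. Moreover, when one expands $\mathrm{div}^{\mathcal W}\big(-\psi^2\lambda(u)F\nabla u\big)$, the term $\partial_v F\cdot\nabla^{\mathcal W}v$ contributes $-\beta\sigma(1+r)^{\sigma-1}\langle\nabla^{\mathcal W}r,\nabla^{\mathcal W}u\rangle$, and completing the square in $|\nabla^{\mathcal W}u|$ against the constant term $\ell K(1+r)^{-\alpha}\cdot F/|\partial_v F|$ is exactly what produces the sharp constants $d_0(2-\sigma-\alpha)^2$, $d_0\sigma(2-\sigma-\alpha)$, and $\sigma(\sigma+d_0-2)$ after optimising over $\beta=tb$ and letting $b\downarrow\max\{\hat u,0\}$. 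Your schematic inequality, with $[1+2R]^{-\alpha}$ on the left and an optimisation only over $p$ and $\eps$, does not have enough parameters to recover these constants even in the polynomial-growth case.
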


\begin{proof}
Fix a constant $b>\max\{\hat{u},0\}.$ From \eqref{u. cres.} we infer that there exists $\nu \in \R$ so that
\begin{equation}\label{u boun.}
\frac{u+\nu}{[1+r]^\sigma}<b \ \ \text{on } \, {\rm spt } \|V\|, \qquad u(p_0)+\nu>0 \ \ \text{for some } \,  p_0\in {\rm spt}\|V\|.
\end{equation}
Consequently, up to replacing $u$ with $u+\nu$, we can suppose that \eqref{u boun.} holds for $u.$

Next, observe that once \eqref{main. est.-1} holds for some $\gamma'$, then it holds for every $\gamma''\leq\gamma'$. In particular, up to increasing $\gamma$ we may assume $\gamma \ge 0$. Define
\[
K \ := \ \|V\|\text{-}{\rm ess}\inf_{\Omega_\gamma} \left\{ [1+r]^\alpha \Big[ \PP_\ell^-[\nabla^2 u]-h|\nabla u|\Big] \right\}.
\]
If $K\leq0$, then the thesis follows at once. So, let us assume that $K>0$ and observe that, for $V$-a.e.  $(p,{\mcal W}) \in G_\ell(\Omega_\gamma\cap{\rm spt} \|V\|)$, 
\begin{equation}\label{div. est.}
\left({\rm div}^{{\mcal W}} \nabla u\right)(p)\geq \frac{\ell K}{[1+r(p)]^\alpha}+ \ell h|\nabla u|(p).
\end{equation}

Let $\psi \in C^\infty_c(M)$ be a cut-off function to be chosen later, and given a small $\eps>0$, consider two functions $\lambda:\R\to\R$ and $F:\R^2\to\R$ satisfying
\begin{equation}\label{def_lam_WMP}
\begin{array}{l}
0\leq\lambda\leq1, \quad \lambda\equiv0 \ {\rm on}\ (-\infty,\gamma + \eps/2], \quad \lambda\equiv1 \ {\rm on}\ [\gamma+\eps,\infty), \\[0.2cm]
\lambda>0 \ {\rm and} \ \lambda'\geq0 \ {\rm on}\ (\gamma,\infty)
\end{array}
\end{equation}
and 
\[
F>0, \ \ \  \frac{\partial F}{\partial v}(v,r)<0 \qquad \text{on } \,  [0,\infty)\times[0,\infty).
\]
Set
	\[
	v=\beta[1+r(p)]^\sigma-u, \qquad \beta>b,
	\]
and define the vector field $Z=-\psi^2\lambda(u)F(v,r)\nabla u$, that by construction is compactly supported in $\Omega_\gamma.$ Note that
\begin{equation}\label{v est.}
(\beta-b)[1+r]^\sigma\leq v\leq \beta[1+r]^\sigma \qquad \text{on } \,  \Omega_\gamma \cap {\rm spt } \|V\|.
\end{equation} 
Using \eqref{div. est.}, a straightforward computation gives for $V$-a.e. $(p, {\mcal W}) \in G_\ell(\Omega_\gamma\cap{\rm spt} \|V\|)$,
\begin{eqnarray}\label{div w est.}
\nonumber{\rm div}^{{\mcal W}}Z&=&-\psi^2\lambda F{\rm div}^{{\mcal W}}\nabla u \\
& & -g\left(\nabla^{{\mcal W}} u,2\psi\lambda F\nabla^{{\mcal W}} \psi+\psi^2\lambda'F\nabla^{{\mcal W}}u+\psi^2\lambda\left[\frac{\partial F}{\partial v}\nabla^{{\mcal W}}v+\frac{\partial F}{\partial r}\nabla^{{\mcal W}}r\right]\right)\\
&\leq&2\psi\lambda F|\nabla^{{\mcal W}}\psi||\nabla^{{\mcal W}} u|-\psi^2\lambda\left|\frac{\partial F}{\partial v}\right| B- \ell h|Z|,
\end{eqnarray}
where
\begin{equation}\label{B def.}
B=\frac{F}{\left|\frac{\partial F}{\partial v}\right|} \ell K[1+r]^{-\alpha}+|\nabla^{{\mcal W}} u|^2+\left[\frac{\frac{\partial F}{\partial r}}{\left|\frac{\partial F}{\partial v}\right|}-\beta\sigma[1+r]^{\sigma-1}\right]g(\nabla^{{\mcal W}} r,\nabla^{{\mcal W}} u).
\end{equation}

We first examine the case ${\mathcal I}_{u,\gamma}(V) = 0$: first, notice that $|\nabla^{{\mcal W}} u|(p)=0$ for all $(p,{\mcal W})\in{\rm spt} V$ (here ${\rm spt} V$ denotes the support of $V$ as measure in $G_\ell(\Omega_\gamma)$). In particular,  \eqref{div w est.} and 
\eqref{B def.} together imply that
\[
\frac{1}{\ell} {\rm div}^{{\mcal W}} Z \leq -\psi^2\lambda FK[1+r]^{-\alpha}-h|Z| \qquad \text{for $V$-a.e. } \, (p, {\mcal W}) \in{\rm spt} V.
\]
Therefore, integrating and using \eqref{bounded varifold} we infer
\[
K\int_{\Omega_\gamma}F\psi^2\lambda[1+r]^{-\alpha}{\rm d}\|V\|\leq 0.
\]
The arbitrariness of $\psi, \lambda$ implies $K\leq 0$, as desired. We hereafter assume that ${\mathcal I}_{u,\gamma}(V) > 0$, and fix $R_0>0$ such that
 	\begin{equation}\label{eq_R0}
 	\int_{G_\ell(\Omega_\gamma \cap B_{R_0})}|\nabla^{{\mcal W}} u|^2 \di V(p, {\mcal W}) > 0.
 	\end{equation}
For $R>2R_0$ and $\theta\in(1/2,1)$, let $\psi:M\to\R$ be a cut off function so that
\[
0\leq\psi\leq1,\ \psi\equiv1\ {\rm in}\ B_{\theta R},\ \psi\equiv0\ {\rm in}\ M\setminus B_R\ {\rm and}\ |\nabla \psi|\leq \frac{2}{(1-\theta) R},
\]

We split the proof into the following cases.
\begin{flushleft}
{\it Case i:}\ $\sigma>0$, $\eta=\alpha+2(\sigma-1)<0$.
\end{flushleft}
We define $F(v,r)=\exp(-qv[1+r]^{-\eta})$, where $q>0$ is a constant that will be defined later. From 
\[
\frac{\frac{\partial F}{\partial r}}{\left|\frac{\partial F}{\partial v}\right|}=\frac{\eta v}{1+r},
\]
we infer 
\[
0\geq\frac{\frac{\partial F}{\partial r}}{\left|\frac{\partial F}{\partial v}\right|}-\beta\sigma[1+r]^{\sigma-1}\geq-\beta(\sigma-\eta)[1+r]^{\sigma-1}=-\beta(2-\sigma-\alpha)[1+r]^{\sigma-1}.
\]
Using \eqref{B def.}, we get
\begin{eqnarray}
\nonumber B&\geq&\frac{\ell K}{q}[1+r]^{\eta-\alpha}+|\nabla^{{\mcal W}} u|^2-\beta(2-\sigma-\alpha)[1+r]^{\sigma-1}|\nabla^{{\mcal W}} u|\\
&=&\frac{\ell K}{q}[1+r]^{2(\sigma-1)}+|\nabla^{{\mcal W}} u|^2-\beta(2-\sigma-\alpha)[1+r]^{\sigma-1}|\nabla^{{\mcal W}} u|.
\end{eqnarray}
In turn out that $B\geq \Lambda|\nabla^{{\mcal W}} u|^2$ provided that
\[
0<\Lambda\leq 1-q\frac{\beta^2(2-\sigma-\alpha)^2}{4 \ell K}.
\]
So, if we take $\tau\in(0,1)$ and define $q=\tau \frac{4 \ell K}{\beta^2(2-\sigma-\alpha)^2}$, then the previous inequality is true for $\Lambda=1-\tau$. Using this fact and that $\left|\frac{\partial F}{\partial v}\right|=q[1+r]^{-\eta}F$, we conclude from \eqref{div w est.} that
\begin{eqnarray}
{\rm div}^{{\mcal W}} Z\leq 2\psi\lambda F|\nabla^{{\mcal W}} \psi||\nabla^{{\mcal W}} u|-\Lambda q[1+r]^{-\eta}F|\nabla^{{\mcal W}} u|^2\psi^2\lambda- \ell h|Z|. 
\end{eqnarray}
Using \eqref{bounded varifold} with such $Z$, it follows
\begin{equation}\label{main est.}
\frac{\Lambda q}{2}\int_{G_\ell(\Omega_\gamma)}\psi^2\lambda F[1+r]^{-\eta}|\nabla^{{\mcal W}} u|^2{\rm d}V(p,{\mcal W})\leq\int_{G_\ell(\Omega_\gamma)}\psi\lambda F|\nabla^{{\mcal W}} \psi||\nabla^{{\mcal W}} u|{\rm d} V(p,{\mcal W}).
\end{equation}
Our choice of $\lambda$ and $\psi$ imply
\begin{equation*}
\int_{G_\ell(\Omega_\gamma)}\psi^2 F[1+r]^{-\eta}|\nabla^{{\mcal W}} u|^2{\rm d}V(p,{\mcal W}) \ge \int_{G_\ell(\Omega_{\gamma+\eps}\cap B_{R_0})}F[1+r]^{-\eta}|\nabla^{{\mcal W}} u|^2{\rm d}V(p,{\mcal W}).
\end{equation*}
Thus by Fatou's lemma and \eqref{v est.}, \eqref{eq_R0}, 
\begin{equation*}
0<\int_{G_\ell(\Omega_{\gamma}\cap B_{R_0})}F[1+r]^{-\eta}|\nabla^{{\mcal W}} u|^2{\rm d}V(p,{\mcal W})\leq\liminf_{\eps\to0}\int_{G_\ell(\Omega_{\gamma+\eps}\cap B_{R_0})}F[1+r]^{-\eta}|\nabla^{{\mcal W}} u|^2{\rm d}V(p,{\mcal W}).
\end{equation*}
This inequality ensures that the left hand side of \eqref{main est.} is uniformly positive for $R \ge R_0$. On the other hand, by Holder inequality
\begin{eqnarray*}
\int_{G_\ell(\Omega_\gamma)}\psi\lambda F|\nabla^{{\mcal W}} \psi||\nabla^{{\mcal W}} u|{\rm d} V(p,{\mcal W})&\leq&\left(\int_{G_\ell(\Omega_\gamma)}\psi^2\lambda F[1+r]^{-\eta}|\nabla^{{\mcal W}} u|^2{\rm d} V(p,{\mcal W})\right)^{\frac{1}{2}}\\
&\cdot&\left(\int_{G_\ell(\Omega_\gamma)}\lambda F[1+r]^\eta|\nabla^{{\mcal W}} \psi|^2{\rm d} V(p,{\mcal W})\right)^{\frac{1}{2}}.
\end{eqnarray*}
Substituting this into \eqref{main est.}, using $\lambda\leq 1$ and again Fatou's lemma, one gets
\begin{equation}\label{pri. est.}
0<E :=\left(\frac{\Lambda q}{2}\right)^2\int_{G_\ell(\Omega_\gamma\cap B_{R_0})}F|\nabla^{{\mcal W}} u|^2{\rm d}V(p,{\mcal W})\leq\int_{\Omega_\gamma} F[1+r]^\eta|\nabla \psi|^2{\rm d}\|V\|.
\end{equation}
From \eqref{Vol.expo}, for every $d>d_0$ there exists an increasing sequence $\{R_l\}$ with $R_1>2R_0$ so that
\begin{equation}\label{Area-est}
\|V\|(B_{R_l})\leq\exp(dR_l^{2-\sigma-\alpha}).
\end{equation}

Using this inequality and \eqref{v est.}, one obtains
\begin{eqnarray*}
\nonumber 0<E&\leq&\int_{{\rm spt}\|V\|\cap\Omega_\gamma\cap(B_{R_l}\setminus B_{\theta R_l})} F[1+r]^\eta|\nabla \psi|{\rm d}\|V\|\\
&\leq& 4\frac{[1+\theta R_l]^\eta R_l^{-2}}{(1-\theta)^2}\int_{{\rm spt} \|V\|\cap\Omega_\gamma\cap(B_{R_l}\setminus B_{\theta R_l})} F{\rm d}\|V\|\\
&\leq& 4\frac{[1+\theta R_l]^\eta R_l^{-2}}{(1-\theta)^2}\exp(dR_l^{2-\sigma-\alpha}-q(\beta-b)(1+\theta R_l)^{2-\sigma-\alpha}).
\end{eqnarray*}
The above inequality does not lead to contradictions when $l \ra \infty$ if and only if 
\[
d\geq q(\beta-b)\theta^{2-\sigma-\alpha}.
\]
Taking $\theta\to1$ we conclude that necessarily $d\geq q(\beta-b)$. Writing $\beta=tb$, using the definition of $q$, letting $\tau\to 1$ and isolating $K$ we get
\[
K\leq db\frac{(2-\sigma-\alpha)^2}{4 \ell}\frac{t^2}{t-1}.
\]
As the function $t^2/(t-1)$ attains a global minimum at $t=2$, letting $b\to\max\{\hat{u},0\}$ and $d\to d_0$ we conclude
\begin{equation}\label{Case I ine}
K\leq \frac{d_0}{\ell}\max\{\hat{u},0\}(2-\sigma-\alpha)^2
\end{equation}

\begin{flushleft}
{\it Case ii:}\ $\sigma=0$ (notice that $\eta=\alpha-2(1-\sigma)=\alpha-2<0$ by our hypothesis).
\end{flushleft}
We proceed exactly as in {\it Case i}. In fact, with little modification, we may conclude the validity of \eqref{Case I ine} for every bounded function $u$ as in the statement of the theorem, not necessarily positive. Since 
	\[
	[1+r]^\alpha \Big[\PP_\ell^-[\nabla^2 u]-h|\nabla u|\Big]=[1+r]^\alpha \Big[\PP_\ell^-[\nabla^2 (u-\hat{u})]-h|\nabla (u-\hat{u})|\Big],
	\] 
we thus easily deduce 
\[
K \ : = \ \|V\|\text{-}{\rm ess}\inf_{\Omega_\gamma} \left\{ [1+r]^\alpha \Big[ \PP_\ell^-[\nabla^2 u]-h|\nabla u|\Big] \right\} \le 0.
\]
\begin{flushleft}
{\it Case iii:}\ $\sigma>0$, $\eta=\alpha+2(\sigma-1)\geq0$.
\end{flushleft}
We choose $F(v,r)=\exp(-qv^{\frac{\sigma-\eta}{\sigma}})$, where $q$ is a constant which will be defined later. From
\[
\frac{F}{\left|\frac{\partial F}{\partial v}\right|}=\frac{\sigma}{2-\sigma-\alpha}\frac{v^{\frac{\eta}{\sigma}}}{q},
\]
we get
\begin{eqnarray}
B\geq|\nabla^{{\mcal W}} u|^2+\frac{\sigma}{2-\sigma-\alpha}\frac{\ell K}{q}(\beta-b)^\frac{\eta}{\sigma}[1+r]^{2(\sigma-1)}-\beta\sigma[1+r]^{\sigma-1}|\nabla^{{\mcal W}} u|
\end{eqnarray}
In turn out that $B\geq \Lambda|\nabla^{{\mcal W}} u|^2$ provided that
\[
0<\Lambda\leq1-q\frac{\beta^2\sigma(2-\sigma-\alpha)}{4 \ell K(\beta-b)^{\frac{\eta}{\sigma}}}.
\]
So, if we take $\tau\in(0,1)$ and $q=\tau\frac{4 \ell K(\beta-b)^{\frac{\eta}{\sigma}}}{\beta^2\sigma(2-\sigma-\alpha)}$, then the previous inequality is satisfied with $\Lambda=1-\tau$. Therefore, from \eqref{div w est.} and \eqref{v est.} we deduce
\[
{\rm div}^{{\mcal W}} Z\leq 2\lambda\psi F|\nabla^{{\mcal W}} \psi||\nabla^{{\mcal W}} u|-\Lambda (\beta-b)^{-\frac{\eta}{\sigma}} q\frac{(2-\sigma-\alpha)}{\sigma}\psi^2\lambda[1+r]^{-\eta}F |\nabla^{{\mcal W}} u|^2- \ell h|Z|.
\]
Following the argument of {\it Case i}, we obtain 
\begin{eqnarray}\label{A-inequality}
\nonumber0<E&:=&\left(\frac{\Lambda (\beta-b)^{\frac{-\eta}{\sigma}} q(2-\sigma-\alpha)}{2\sigma}\right)^2\int_{G_\ell(\Omega_\gamma\cap B_{R_0})}F|\nabla^{{\mcal W}} u|^2{\rm d}V(p,{\mcal W})\\
&\leq& 4[1+R_l][R_l(1-\theta)]^{-2}\exp(dR_l^{\sigma-\eta}-q(\beta-b)^{\frac{2-\alpha-\sigma}{\sigma}}(1+\theta R_l)^{\sigma-\eta}),
\end{eqnarray}
where $d>d_0$ and $\{R_l\}$ still satisfies \eqref{Area-est}, $R_1>2R_0$. Letting $l \ra \infty$ in \eqref{A-inequality} we deduce that necessarily
\[
d\geq q(\beta-b)^{\frac{2-\alpha-\sigma}{\sigma}}\theta^{\sigma-\eta}.
\]
Letting $\theta\to1$, using the expression of $q$, isolating $K$ and noting that $\eta+2-\sigma-\alpha=\sigma$, we obtain
\[
K\leq\frac{1}{\tau} d\frac{\sigma(2-\sigma-\alpha)\beta^2}{4 \ell (\beta-b)}.
\]
To conclude the proof, call $\beta=tb\ (t>1)$, let $\tau\to1$, $d\to d_0$ and $b\to\max\{\hat{u},0\}$ and minimize the resulting expression in $t>1$ to get
\[
K\leq \sigma(2-\sigma-\alpha)\frac{d_0}{\ell}\max\{\hat{u},0\}.
\]

\begin{flushleft}
{\it Case iv:} \(\alpha=2-\sigma\).
\end{flushleft}
We choose  $F(v,r)=F(v)=v^{-q}$, where $q$ will be defined later. From 
\begin{eqnarray*}
B\geq \frac{\ell K}{q}[1+r]^{2(\sigma-1)}(\beta-b)+|\nabla^{{\mcal W}} u|^2-\beta\sigma[1+r]^{\sigma-1}|\nabla^{{\mcal W}} u|,
\end{eqnarray*}
the lower bound $B\geq \Lambda|\nabla^{{\mcal W}} u|^2$ holds provided that 
\[
\Lambda=1-\frac{1}{4}\frac{q}{\ell K}\frac{\beta^2\sigma^2}{(\beta-b)}.
\]
In particular, if $\tau\in(0,1)$, $q=\tau\frac{4(\beta-b)\ell K}{\beta^2\sigma^2}$ the bound is satisfied with $\Lambda=1-\tau.$ With these choices,
\[
{\rm div}^{{\mcal W}} Z\leq 2\psi\lambda F|\nabla^{{\mcal W}} \psi||\nabla^{{\mcal W}} u|-\psi^2\lambda qv^{-1}F\Lambda|\nabla^{{\mcal W}} u|^2- \ell h|Z|.
\]

Arguing as in the previous cases, using now that, for $d> d_0$, the inequality $\|V\|(B_{R_l})\leq R_l^d$ holds along some increasing sequence $\{R_l\}$ with $R_1>2R_0$, we infer
\[
\begin{array}{lcl}
0 & < & \disp E : =\left(\frac{\Lambda q}{2}\right)^2\int_{G_\ell(\Omega_\gamma\cap B_{R_0})}v^{-1} F{\rm d}V(p,{\mcal W}) \\[0.5cm]
& \leq & \disp \disp C(1-\theta)^2(\beta-b)
^{-q}[1+\theta R_l]^{-q\sigma}R_l^{-2}[1+R_l]^{\sigma}R_l^d,
\end{array}
\]
where $C$ is a constant that does not depend on $l$. Necessarily, $d-2+\sigma\geq q\sigma$. This is incompatible with $d_0+\sigma\leq2$, forcing $K\leq0$ in this case. On the other hand, if $d_0+\sigma> 2,$ we put $\beta=tb$ $(t>1)$, using the definition of $q$, letting $b\to\max\{\hat{u},0\}$ and $d\to d_0$, and minimizing in $t$, one gets
\[
K\leq \frac{1}{\ell} \sigma(\sigma+d_0-2)\max\{\hat{u},0\}.
\]
This completes the proof of the theorem.
\end{proof}

\begin{theorem}[\textbf{Parabolicity}]\label{MPV_para}
Let $(M,g)$ be a complete Riemannian manifold, and suppose that $V$ is a $\ell$-dimensional varifold with locally bounded first variation and normalized mean curvature vector ${\bf H}$ satisfying 
	\[
	|{\bf H}| \le h \qquad \text{$\|V\|$-a.e.}, 
	\]
for some $0 \le h \in L^1_\loc(M,\|V\|)$. Let $u \in \lip_\loc(M)$ satisfy 
	\[
	\sup_{{\rm spt } \|V\|} u < \infty,
	\]
	and assume that, for some $\gamma \in \R$, the upper level set $\Omega_\gamma = \{u> \gamma\}$ satisfies  
	\[
	{\rm spt} \|V\| \cap \Omega_\gamma \neq \varnothing, \qquad {\rm spt} \|\partial V\| \cap \Omega_\gamma = \varnothing.
	\]
Let $\{u_\eps\}_\eps \subset C^2(M)$ be a locally equi-Lipschitz sequence of functions, converging to $u$ locally uniformly on $M$ as $\eps \to 0$ and satisfying 
	\[
	\PP_\ell^-[\nabla^2 u_\eps](p)-h|\nabla u_\eps|(p) \ge -\eps \qquad \text{for $\|V\|$-a.e. } \, p \in {\rm spt} \|V\| \cap \Omega_\gamma.
	\]
If
	\begin{equation}\label{cond_para}
\text{$V$ is rectifiable and } \qquad \int^{\infty} \frac{r \di r}{\|V\|(B_r)} = \infty,
\end{equation}
then $u$ is locally constant on ${\rm spt}\|V\| \cap \Omega_\gamma$.
\end{theorem}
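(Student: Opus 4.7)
The plan is to adapt the Ahlfors-type parabolicity argument of Theorem~5.1 in \cite{PRS1} to the varifold setting, using the regularizing sequence $\{u_\eps\}$ in place of $C^2$-smoothness of $u$ and Karp's weighted cutoffs to encode the volume-growth hypothesis \eqref{cond_para}. I argue by contradiction: assuming $u$ is not locally constant on $\supp\|V\|\cap\Omega_\gamma$, since $V=V(\Sigma,\theta)$ is rectifiable and $u$ is locally Lipschitz, the approximate tangential gradient $\nabla^{\mcal W} u$ exists for $V$-a.e.\ $(p,\mcal W)$, and my goal is to produce $\gamma_1>\gamma$ and a compact set $K\subset \Omega_{\gamma_1}\cap\supp\|V\|$ with
\[
c := \int_{G_\ell(K)} |\nabla^{\mcal W} u|^2\, dV \;>\; 0.
\]

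Next I would choose a Lipschitz test field: for each $\eps>0$ and large $R$, take
\[
Z_{\eps,R} := -\psi_R^2\,(u_\eps-\gamma_1)^+\,\nabla u_\eps,
\]
where $\psi_R$ is a Lipschitz cutoff with $\psi_R\equiv 1$ on $K$ and $\supp\psi_R\subset B_R$. Since $u_\eps\to u$ locally uniformly, for $\eps$ small the support of $Z_{\eps,R}$ lies in $\{u_\eps>\gamma_1\}\Subset\Omega_\gamma$, hence disjoint from $\supp\|\partial V\|$, so \eqref{bounded varifold} applies (after a mild mollification at the corner of $(\cdot-\gamma_1)^+$). Using $\ell^{-1}\mathrm{div}^{\mcal W}\nabla u_\eps = \ell^{-1}\Tr_{\mcal W}\nabla^2 u_\eps \ge \PP_\ell^-[\nabla^2 u_\eps] \ge h|\nabla u_\eps|-\eps$ and expanding $\mathrm{div}^{\mcal W}Z_{\eps,R}$, the two $h|\nabla u_\eps|$ contributions will cancel; controlling the cross term by $2ab\le \tfrac12 a^2+2b^2$ and absorbing yields the Caccioppoli-type estimate
\[
\int_{G_\ell(\{u_\eps>\gamma_1\})}\psi_R^2\,|\nabla^{\mcal W} u_\eps|^2\, dV \;\le\; 2\ell\eps\bar K\,\|V\|(B_R) + 4\bar K^2\int|\nabla\psi_R|^2\, d\|V\|,
\]
where $\bar K := \sup u-\gamma_1+1$ bounds $(u_\eps-\gamma_1)^+$ for small $\eps$.

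Then I would invoke Karp's cutoff: fix $R_0$ so $K\Subset B_{R_0}$, set $A_R := \int_{R_0}^R s/\|V\|(B_s)\,ds$, and define $\psi_R(x) = A_R^{-1}\int_{r(x)\vee R_0}^R s/\|V\|(B_s)\,ds$ for $r(x)\le R$, $\psi_R=0$ on $M\setminus B_R$. A layer-cake integration by parts gives $\int|\nabla\psi_R|^2\,d\|V\|\le 2/A_R\to 0$ by \eqref{cond_para}. Choosing $R=R(\eps)\to\infty$ slowly enough that also $\eps\|V\|(B_{R(\eps)})\to 0$ (possible since $\|V\|$ is Radon), the right-hand side vanishes. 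Meanwhile, since $u_\eps\to u$ uniformly on $K$ with $u>\gamma_1$ on $K$, eventually $K\subset\{u_\eps>\gamma_1\}$ and $\psi_{R(\eps)}\equiv 1$ on $K$; and by the lower semicontinuity of the tangential Dirichlet energy under weak convergence in $W^{1,2}(\Sigma\cap K)$---ensured by the local equi-Lipschitz hypothesis via Banach--Alaoglu followed by identification of the weak limit with $\nabla^\Sigma u$---the left-hand side is eventually at least $c/2>0$. The resulting contradiction would conclude the argument.

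The main obstacle I foresee is Step~1: passing from the topological failure of local constancy of the continuous function $u$ on $\supp\|V\|\cap\Omega_\gamma$ to the quantitative lower bound $c>0$ on a compact subset. This seems to require combining rectifiability of $V$ with density-type considerations to guarantee that non-constancy of $u$ along $\supp\|V\|$ is genuinely detected by the tangential gradient; apart from this and the standard justification of the LSC step, the rest reduces to a clean integration-by-parts plus Karp's criterion.
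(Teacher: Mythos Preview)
Your approach is viable but differs from the paper's in two ways. Analytically, the paper does not use Karp's weighted cutoffs: with the test field $Z_\eps=-\psi^2\lambda(u_\eps)e^{u_\eps}\nabla u_\eps$ it takes standard cutoffs satisfying $|\nabla\psi|\le 2/(R-r)$, passes to $\limsup_{\eps\to 0}$ for \emph{fixed} $r<R$ (which kills the $\eps\|V\|(B_R)$ error without any coupling of $R$ to $\eps$), lets $R\downarrow r$ to obtain the differential inequality $I(r)^2 \le C\big(\|V\|(B_r)\big)'I'(r)$, and then integrates, invoking \cite[Prop.~1.3]{rigolisetti} to convert \eqref{cond_para} into divergence of $\int^\infty \di s/(\|V\|(B_s))'$. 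Your Karp-cutoff route is more direct and sidesteps this differential-inequality machinery, at the mild cost of the diagonal choice $R=R(\eps)$.

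More significantly, the paper runs the argument \emph{forward} rather than by contradiction: it proves $\liminf_{\eps\to 0}\int_{G_\ell(B_t\cap\Omega_\gamma)}|\nabla^{\mcal W}u_\eps|^2\,\di V=0$ for every $t$, extracts a subsequence with $\nabla^\Sigma u_{\eps_j}\to 0$ in $L^2$, and then, via the theory of weakly differentiable functions on rectifiable varifolds in \cite{Men}, concludes that $u$ has zero weak derivative; \cite[Thm.~8.34]{Men} then gives that $u$ takes only countably many values, so by continuity it is locally constant. This is exactly the tool that resolves your ``main obstacle'': your Step~1 is the contrapositive of Menne's theorem. The forward direction also buys you something on the other technical point, since it needs only strong $L^2$ convergence of $\nabla^\Sigma u_{\eps_j}$ to $0$ (immediate once the integral vanishes) rather than the lower-semicontinuity of the Dirichlet energy that your contradiction argument requires.
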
		
\begin{proof}
Up to renaming the sequence, we can assume that $\|u_\eps-u\|_\infty < \eps$. Fix $2\eps_0 \in (0, u^*-\gamma)$, choose $\gamma' = \gamma + \eps_0$ and assume $4\eps \in (0,\eps_0)$. Since $u_\eps^* > u^* - \eps > \gamma'$, observe that $\{u_\eps \ge \gamma'\} \subset \Omega_\gamma$ and is non-empty. Consider the vector field 
	\[
	Z_\eps = - \psi^2 \lambda(u_\eps) e^{u_\eps} \nabla u_\eps,
	\]
where $\psi$ is a cut-off to be chosen later, and $\lambda \in C^1(\R)$ satisfying
	\[
0\leq\lambda\leq1, \quad \lambda' \ge 0 \ \text{ on } \, \R, \quad \lambda\equiv0 \ \text{ on } \,  (-\infty,\gamma'], \quad \lambda > 0 \  \text{ on } \, (\gamma',\infty).
	\]
Note that $Z_\eps$ is compactly supported in $\Omega_\gamma$. For $V$-a.e. $(p,{\mcal W}) \in G_\ell(\Omega_\gamma)$, it holds
	\[	
	\begin{array}{lcl}
	{\rm div}^{{\mcal W}} Z_\eps & \le & \disp -2\psi \lambda(u_\eps) e^{u_\eps} \langle \nabla^{{\mcal W}} \psi, \nabla u_\eps \rangle - \psi^2 \lambda(u_\eps) e^{u_\eps} |\nabla^{{\mcal W}} u_\eps|^2 - \psi^2 \lambda(u_\eps) e^{u_\eps} {\rm div}^{{\mcal W}} \nabla u_\eps \\[0.3cm]
	& \le & \disp 2\psi \lambda(u_\eps) e^{u_\eps} |\nabla^{{\mcal W}} \psi||\nabla^{{\mcal W}} u_\eps| - \psi^2 \lambda(u_\eps)e^{u_\eps} |\nabla^{{\mcal W}} u_\eps|^2 - \psi^2 \lambda(u_\eps) e^{u_\eps} h |\nabla u_\eps| + \eps \lambda(u_\eps)\psi^2 e^{u_\eps}. 
	\end{array}
	\]
Integrating and using \eqref{bounded varifold} together with H\"older inequality, we deduce
	\begin{equation}\label{inte_byparts}
	\begin{array}{l}
	\disp \int_{G_\ell(\Omega_\gamma)} \psi^2 \lambda(u_\eps) e^{u_\eps} |\nabla^{{\mcal W}} u_\eps|^2 \di V(p,{\mcal W}) \\[0.5cm]
\quad \le \disp 2 \int_{G_\ell(\Omega_\gamma)} \psi \lambda(u_\eps) e^{u_\eps} |\nabla^{{\mcal W}} \psi||\nabla^{{\mcal W}} u_\eps| \di V(p,{\mcal W}) + \eps \int_{\Omega_\gamma} \psi^2 \lambda(u_\eps) e^{u_\eps} \di \|V\|\\[0.5cm]	
\quad \le \disp 2 \left( \int_{\Omega_\gamma} \psi \lambda(u_\eps) e^{u_\eps} |\nabla \psi| \di \|V\| \right)^{\frac{1}{2}} \left( \int_{G_\ell(\Omega_\gamma)} \psi |\nabla \psi| \lambda(u_\eps) e^{u_\eps} |\nabla^{{\mcal W}} u_\eps|^2 \di V(p,{\mcal W})\right)^{\frac{1}{2}} \\[0.4cm]
\quad \disp + \eps \int_{\Omega_\gamma} \psi^2 \lambda(u_\eps) e^{u_\eps} \di \|V\|.
	\end{array}
	\end{equation}
Given $0 < r < R$, let $\psi \in C^\infty_c(B_R)$ satisfy
	\[
	0 \le \psi \le 1, \quad \psi \equiv 1 \ \ \text{ on } \, B_r, \qquad |\nabla \psi| \le \frac{2}{R-r}.	
	\]
Then, letting $\lambda \uparrow \chi_{(\gamma', \infty)}$ ($\chi$ the  indicator function) and defining
	\[
	I_\eps(t) : = \int_{G_\ell(B_t \cap \Omega_{\gamma'})} e^{u_\eps} |\nabla^{{\mcal W}} u_\eps|^2 \di V(p,{\mcal W}),
	\]
from \eqref{inte_byparts} and $u_\eps^* : = \sup_M u_\eps < u^* + 1$ we deduce
	\[
	I_\eps(r) \le 4 e^{\frac{u^*+1}{2}} \left( \frac{\|V\|(B_R) - \|V\|(B_r)}{R-r} \right)^\frac{1}{2} \left(\frac{I_\eps(R)-I_\eps(r)}{R-r}\right)^{\frac{1}{2}} + \eps e^{u^*+1} \|V\|(B_R).
	\]
Setting
	\[
	I(t) = \limsup_{\eps \ra 0} I_\eps(t).
	\]
We therefore deduce 
	\[
	I(r) \le 4 e^{\frac{u^*+1}{2}} \left( \frac{\|V\|(B_R) - \|V\|(B_r)}{R-r} \right)^\frac{1}{2}  \left(\frac{I(R)-I(r)}{R-r}\right)^{\frac{1}{2}}.
	\]
Letting $R \downarrow r$ and using the monotonicity of $I(t)$ and $\|V\|(B_t)$, we deduce for a.e. $r>0$ the inequality
	\[
	I(r)^2 \le 16e^{u^*+1} \big( \|V\|(B_r)\big)' I'(r).
	\]
We claim that $I \equiv 0$. Otherwise, there would exist $r_0>0$ such that $I(r) \ge I(r_0)>0$, and integrating on $[r_0,r]$ we deduce
	\begin{equation}\label{eq_contr_para}
	\frac{1}{I(r_0)} - \frac{1}{I(r)} \ge \int_{r_0}^r \frac{I'(s)\di s}{I(s)^2} \ge \frac{e^{-u^*-1}}{16} \int_{r_0}^r \frac{\di s}{\big( \|V\|(B_s)\big)'}.
	\end{equation}
By \cite[Prop.1.3]{rigolisetti},
	\[
	\int_{r_0}^r \frac{s-r_0}{\|V\|(B_s)} \di s \le 2 \int_{r_0}^r \frac{\di s}{(\|V\|(B_s))'},
	\]
that together with \eqref{cond_para} enables to deduce
	\[
	\int^{\infty} \frac{\di s}{(\|V\|(B_s))'} = \infty.
	\]
Letting $r \ra \infty$ in \eqref{eq_contr_para}, we reach a contradiction. From $I \equiv 0$ and letting $\gamma'\to\gamma$, one gets
	\begin{equation}\label{eq_bella}
	\liminf_{\eps \to 0}\int_{G_\ell(B_t \cap \Omega_{\gamma})}  |\nabla^{{\mcal W}} u_\eps|^2\di V(p,{\mathcal W})= 0.
	\end{equation}
We next use that $V$ is rectifiable, namely, that $V = V\left(\Sigma,\theta\right)$ for a $\ell$-dimensional countably rectifiable set $\Sigma$, with $0 < \theta \in L^1_\loc(\mathscr{H}^\ell \measrest \Sigma)$ that is $\mathscr{H}^\ell$-a.e. positive on $\Sigma.$ With this notation, \eqref{eq_bella} becomes
\begin{equation}\label{eq_bella-1}
		\liminf_{\eps \to 0}\int_{\Sigma \cap B_t}  |\nabla^{\Sigma} u_\eps|^2\theta {\rm d}\mathscr{H}^\ell= 0.
\end{equation}
Since $u_\eps$ and $u$ are Lipschitz, they are weakly differentiable in the sense of \cite{Men} on $V\llcorner (B_t \cap \Omega_{\gamma})$, and by \eqref{eq_bella-1} we conclude that $u$ must have zero weak derivative on $V\llcorner (B_t \cap \Omega_{\gamma})$. The proof of \cite[Thm. 8.34]{Men} shows that a weakly differentiable function on a varifold with zero weak derivative must take only countably many values. Thus the varifold $V\llcorner (B_t \cap \Omega_{\gamma})$ has some decomposition into components on which the weakly differentiable function $u$ is constant. As $u$ is continuous, then it is constant on each connected component of ${\rm spt} \|V\| \cap \Omega_\gamma\cap B_t.$ Since $t$ was arbitrary, one deduces that $u$ is constant on each connected component of ${\rm spt} \|V\| \cap \Omega_\gamma.$
\end{proof}

\section{Proof of Theorem \ref{omori-yau_vari} and Corollary \ref{cor_spectrum}}

\begin{proof}[of Theorem \ref{omori-yau_vari}] We first consider the case $ \|{\bf H}\|_\infty < \Lambda_\ell$. Assume by contradiction that \eqref{bonita} (respectively, \eqref{bonita_bis}) does not hold, under the validity of $(\mathbb{A})$ (respectively, $(\mathbb{B})$). Fix 
	\[
	\begin{array}{ll}
	R \in \left(  {\rm dist}\big({\rm spt} \|\Sigma\|, \partial \Omega\big), {\rm dist}\big({\rm spt} \|\partial \Sigma\|, \partial \Omega\big) \right) & \quad \text{in case } \, (\mathbb{A}), \\[0.4cm]
	R \in \left( {\rm dist}\big({\rm spt} \|\Sigma\|, \partial \Omega\big), \min\Big\{\frac{\Lambda_\ell - \|\mathbf{H}\|_\infty}{c_+}, \dist({\rm spt} \|\partial \Sigma\|, \partial \Omega)\Big\} \right) & \quad \text{in case } \, (\mathbb{B}),
	\end{array}
	\]
Consider the radial function $u$ guaranteed by Proposition \ref{prop_constr_u}, and in view of Proposition \ref{u-good}, for small $\eps>0$ choose a smooth approximation $\bar u = u_\eps \in C^\infty(\Omega_R)$ satisfying 
	\begin{equation}\label{prop_u_teo}
\left\{ \begin{array}{ll}
\disp \{u_\eps  > \gamma\} \cap {\rm spt} \|\Sigma\| \neq \varnothing, \ \ \text{ for some } \, \gamma > \limsup_{r(x) \ra R} u_\eps(x) \\[0.3cm]
\PP_\ell^-[\nabla^2 u_\eps] - \|{\bf H}\|_\infty |\nabla  u_\eps| > \frac{\bar \delta}{2} \qquad \text{on } \, \{u_\eps > \gamma\} \\[0.4cm]
\sup_{\Omega_R} u_\eps < \infty.
\end{array}\right.
	\end{equation}
In particular, 
	\[
	\disp \inf_{\{u_\eps > \gamma\} \cap {\rm spt} \|\Sigma\|} \Big\{ \PP_\ell^-[\nabla^2  u_\eps]- \|{\bf H}\|_\infty |\nabla u_\eps|\Big\} \ge \frac{\bar \delta}{2}.
	\]
Because of our growth assumptions on $\|\Sigma\|(B_r)$, we can therefore apply the maximum principle at infinity, Theorem \ref{MPV}, to deduce a contradiction.\par
If $\|{\bf H}\|_\infty = \Lambda_\ell$, having fixed $R$ as above in case $(\mathbb{A})$ and using Propositions \ref{prop_constr_u} and \ref{u-good}, there exists a function $u$ depending only on the distance $r$ to $\partial \Omega$, and a sequence $\{u_\eps\} \in C^\infty(\Omega_R)$ of equi-Lipschitz functions converging uniformly to $u$, such that 
	\[
\left\{ \begin{array}{ll}
\disp \{u_\eps > \gamma\} \cap {\rm spt} \|\Sigma\| \neq \varnothing, \ \ \text{ for some } \, \gamma > \limsup_{r(x) \ra R} u(x)  \\[0.3cm]
\PP_\ell^-[\nabla^2 u_\eps] - \|{\bf H}\|_\infty |\nabla  u_\eps| \ge -\eps \qquad \text{on } \, \{u > \gamma\}, \\[0.4cm]
\sup_{\Omega_R} u < \infty.
\end{array}\right.
	\]
Because of Theorem \ref{MPV_para} and the connectedness of $\Sigma$, $u$ (hence, $r$) is constant on $\Sigma$. This concludes the proof.
\end{proof}
\begin{proof}[of Corollary \ref{cor_spectrum}] Referring to the proof of Theorem \ref{omori-yau_vari} above, in our assumptions we can construct $u_\eps$ satisfying \eqref{prop_u_teo}. Then, by the chain rule, its restriction to $\Sigma$ satisfies
	\begin{equation}\label{wmpcor}
	\begin{array}{lll}
	\disp \frac{1}{\ell} \Delta_\Sigma u_\eps & \ge & \disp \frac{1}{\ell} \disp \mathrm{Tr}_{T\Sigma} (\nabla^2 u_\eps) + \langle {\bf H}, \nabla u_\eps \rangle \ge \PP_{\ell}^-[ \nabla^2 u_\eps] - |{\bf H}||\nabla u_\eps| \\[0.3cm]
	& \ge & \disp \bar \delta/2 \qquad \text{on } \, \{ u_\eps > \gamma\} \cap \Sigma. 
	\end{array}
	\end{equation}
Since $u_\eps^* : = \sup_{\Sigma} u_\eps < \infty$, we deduce that necessarily $\Sigma$ is stochastically incomplete (cf. Subsection \ref{rem_smooth}). To prove that $\sigma(\Delta_\Sigma)$ is discrete, recall that $u_\eps$ is given by approximating the radial function $u$  ensured by Proposition \ref{prop_constr_u}, say, $\|u_\eps - u\|_\infty < \eps$ on $\Omega_R$. The construction of $u$ and our assumption $\mathrm{dist}(x, \partial \Omega) \ra 0$ as $x \in \Sigma$, $x \to \infty$ imply that $u(x) \ra u^*$. Hence, the set $U_\eps = \{ x \in \Sigma : u(x) > u^* - \eps\}$ is an exterior region of $\Sigma$, namely, it has compact complement. Because of \eqref{wmpcor} and the maximum principle, $u_\eps^* = \sup_\Sigma u_\eps$ is not attained on $\Sigma$, and we can thus compute 
	\[
	\frac{- \Delta_\Sigma (u_\eps^*- u_\eps)}{u_\eps^*- u_\eps} \ge \frac{\ell \bar \delta}{2(2\eps + u^* - u)} \ge \frac{\ell \bar \delta}{6\eps} \qquad \text{on } \, U_\eps.
	\]
Hence, in view of Persson's formula and Barta's theorem (cf. \cite{bjmm} and Chapter 3 of \cite{bmr}) the infimum of the essential spectrum $\sigma_{\mathrm{ess}}(\Delta_\Sigma)$ is related to the bottom of the spectrum $\lambda_1^{\Delta_\Sigma}(U_\eps)$ of the exterior region $U_\eps$ as follows:
	\[
\inf \sigma_{\rm ess}(\Delta_\Sigma) = \lim_{\eps \ra 0} \lambda_1^{\Delta_\Sigma}(U_\eps) \ge \liminf_{\eps \ra 0} \inf_{U_\eps} \frac{- \Delta_\Sigma(u_\eps^*-u_\eps)}{u_\eps^*-u_\eps} \ge \liminf_{\eps \ra 0} \frac{\ell \bar \delta}{6\eps} = \infty,
	\]
hence $\Delta_\Sigma$ has discrete spectrum.	
\end{proof}

\vspace{6mm}

\flushleft
Jorge H. S. Lira\\
(corresponding author)\\
Departamento de Matem\'atica\\
Universidade Federal do Cear\'a\\
Campus do Pici - Bloco 914\\
60455-760 Fortaleza - Cear\'a, Brazil
jorge.lira@mat.ufc.br

\vspace{4mm}
Adriano A. Medeiros\\
Departamento de Matem\'atica\\
Universidade Federal da Para\'iba\\
58059-900, Jo\~ao Pessoa - Para\'iba, Brazil 
adriano@mat.ufc.br, adrianoalves@mat.ufpb.br

\vspace{4mm}
Luciano Mari\\
Dipartimento di Matematica ``G. Peano"\\
Universit\'a degli Studi di Torino\\
Via Carlo Alberto 10\\
10123 Torino - Italy\\
luciano.mari@unito.it

\vspace{4mm}
Eddygledson S. Gama\\
Departamento de Ci\^encia e Tecnologia\\
Universidade Federal Rural do Semi-\'Arido\\
Campus de Cara\'ubas, 59780-000 Cara\'ubas \\
Rio Grande do Norte - Brazil.\\
eddygledson@ufersa.edu.br


\begin{thebibliography}{}

\bibitem{ahlforssario}  L.V. Ahlfors \and L. Sario, `Riemann surfaces', Princeton mathematical series 26, Princeton Univ. Press, 1960.

\bibitem{adfl}  A. Alarc\'on, B. Drinovec Drnov\u{s}ek, F. Forstneri\u{c} \and F.J. L\'opez, `Minimal surfaces in minimally convex domains', {\em Trans. Amer. Math. Soc. } 371 (2019), no. 3, 1735--1770.

\bibitem{Bessa}  L.J. Al\'ias, G.P. Bessa \and M. Dajczer, `The mean curvature of cylindrically
bounded submanifolds', {\em Math. Ann. } 345 (2009), 367--376.

\bibitem{amr}  L.J. Al\'ias, P. Mastrolia \and M. Rigoli, `Maximum principles and geometric applications'. Springer Monographs in Mathematics. Springer, Cham, 2016. xvii+570 pp.
 
\bibitem{amir}  L.J. Al\'ias, J.F.R. Miranda \and M. Rigoli, `A new open form of the weak maximum principle and geometric applications', {\em Comm. Anal. Geom. } 24 (2016), no. 1, 1--43.



\bibitem{AndRod}  M.T. Anderson \and L. Rodr\'iguez, `Minimal surfaces and 3-manifolds of nonnegative Ricci curvature', {\em Math. Ann. } 284 (1989), no. 3, 461--475.

\bibitem{bjmm}  G.P. Bessa, L.P. Jorge, L. Mari \and J.F.B. Montenegro, `Spectrum Estimates and Applications to Geometry', Topics in Modern Differential Geometry (2018), Springer, 111--198.  

\bibitem{BLPS}  G.P. Bessa, J.H.S. de Lira, S. Pigola \and A.G. Setti, `Curvature estimates for submanifolds immersed into horoballs and horocylinders', {\em J. Math. Anal. Appl. } 431 (2015), no. 2, 1000--1007.
 

\bibitem{bjm}  G.P. Bessa, L.P. Jorge \and J.F.B. Montenegro, `The spectrum of the Martin-Morales-Nadirashvili minimal surfaces is discrete'. {\em J. Geom. Anal. } 20 (2010), no. 1, 63--71.

\bibitem{bmr}  B. Bianchini, L. Mari \and M. Rigoli, `On some aspects of oscillation theory and geometry', {\em Mem. Amer. Math. Soc. } 225 (2013), no. 1056, vi+195 pp.

\bibitem{Dierkes}  U. Dierkes, `Maximum principles and nonexistence results for minimal submanifolds', {\em Manuscripta Math. } 69 (1990), 203--218.

\bibitem{DS}  U. Dierkes \and D. Schwab, `Maximum principles for submanifolds of arbitrary codimension and bounded mean curvature', {\em Calc. Var. Partial Differential Equations } 22 (2005), no.2, 173-–184.

\bibitem{Ecker}  K. Ecker, `Local techniques for mean curvature flow', Proc. Centre Math. Appl. Austral. Nat. Univ. 26, Austral. Nat. Univ., Canberra, 1991.

\bibitem{Esch_1}  J.-H. Eschenburg, `Maximum principle for hypersurfaces', {\em Manuscripta Math. } 64 (1989), no. 1, 55--75.

\bibitem{Esch_2}  J.-H. Eschenburg, `Comparison theorems and hypersurfaces', {\em Manuscripta Math. } 59 (1987), no. 3, 295--323. 

\bibitem{Esch_3}  J.-H.Eschenburg, `The splitting theorem for space-times with strong energy conditions', {\em J. Differential Geometry } 27 (1988), 477--491.
 
\bibitem{EschHein}  J.-H. Eschenburg \and E. Heintze, `Comparison theory for Riccati equations', {\em Manuscripta Math. } 68 (1990), no. 2, 209--214.

\bibitem{Esch_OSul}  J.-H. Eschenburg \and J.J. O'Sullivan, `Jacobi tensors and Ricci curvature',  {\em Math. Ann. } 252 (1980), no. 1, 1--26.

\bibitem{Espinar}  J.M. Espinar \and H. Rosenberg, `A maximum
principle at infinity for higher codimension', Preprint, IMPA, Rio de Janeiro, 2010.

\bibitem{GalloRod}  G.J. Galloway \and L. Rodr\'iguez, `Intersections of minimal submanifolds',  {\em Geom. Dedicata } 39 (1991), no. 1, 29--42.   

\bibitem{GW-1}  R.E. Greene \and H. Wu, `Integrals of subharmonic functions on manifolds of curvature', {\em Invent. Math. } 27 (1974), no. 4, 265--298.

\bibitem{GW}  R.E. Greene \and H. Wu, `$C^\infty$ approximations of convex, subharmonic, and plurisubharmonic functions', {\em Ann. Sci. \'Ecole Norm. Sup. } (4) 12 (1979), no. 1, 47-84.

\bibitem{grigoryan}  A. Grigor'yan, `Analytic and geometric background of recurrence and non-explosion of the Brownian motion on Riemannian manifolds', {\em Bull. of Amer. Math. Soc. } 36 (1999), no.2, 135--249.

\bibitem{HL_dir}  F.R. Harvey \and H.B. Lawson Jr., `Dirichlet Duality and the Nonlinear Dirichlet Problem on Riemannian Manifolds', {\em J. Differential Geom. } 88 (2011), 395--482.

\bibitem{HL_plurisub}  F.R. Harvey \and H.B. Lawson Jr., `Geometric plurisubharmonicity and convexity: an introduction', {\em Adv. Math. } 230 (2012), no. 4-6, 2428--2456. 

\bibitem{henke}  P. Henkemeyer, `Enclosure theorems and barrier principles for energy stationary currents and the associated Brakke-flow', {\em Analysis } (Berlin) 37 (2017), no. 4, 223--241.

\bibitem{henke2}  P. Henkemeyer, `A barrier principle for surfaces with prescribed mean curvature and arbitrary codimension', {\em Results Math. } 64 (2013), no. 1-2, 67--75.

\bibitem{ilmanen}  T. Ilmanen, `A strong maximum principle for singular minimal hypersurfaces', {\em Calc. Var. PDE } 4 (1996), 443--467.

\bibitem{imperapigolasetti}  D. Impera, M. Rigoli \and A.G. Setti, `Potential theory for manifolds with boundary and applications to controlled mean curvature graphs', {\em J. Reine Angew. Math. } 733 (2017), 121--159.
 
\bibitem{JT}  L.P. Jorge \and F. Tomi, `The barrier principle for minimal submanifolds of arbitrary codimension', {\em Ann. Global Anal. Geom. } 24 3 (2003), 261--267.

\bibitem{KenmotsuXia}  K. Kenmotsu \and C. Xia, `Intersections of minimal submanifolds in manifolds of partially positive curvature', {\em Kodai Math. J. } 18 (1995), no. 2, 242--249.    

\bibitem{kettemondino}  C. Ketterer \and A. Mondino, `Sectional and intermediate Ricci curvature lower bounds via optimal transport', {\em Adv. Math. } 329 (2018), 781--818.


 
\bibitem{maripessoa}  L. Mari \and L.F. Pessoa, `Duality between Ahlfors-Liouville and Khas'minskii properties for non-linear equations', {\em Comm. Anal. Geom.} 28 (2020), no. 2, 395--497.

\bibitem{mazet}  L. Mazet, `A general halfspace theorem for constant mean curvature surfaces', {\em Amer. J. Math. } 135 (2013), no. 3, 801--834.

\bibitem{meeksrosenberg}  W.H. Meeks III \and H. Rosenberg, `Maximum principles at infinity', {\em J. Differential Geom. } 79 (2008), no. 1, 141--165. 


\bibitem{Men}  U. Menne, `Weakly differentiable functions on varifolds.', {\em Indiana Univ. Math. J. } 65 (2016), no. 3, 977--1088.

\bibitem{moschen}  M. Moschen, `Principio di Massimo Forte per le Frontiere di Misura Minima', {\em Ann. Univ. Ferrara - Sez. VII -Sc. Mat. } 23 (1977), 165--168.

\bibitem{petersen}  P. Petersen, `Riemannian geometry. Third edition', Graduate Texts in Mathematics, 171. Springer, Cham, 2016. xviii+499 pp.

\bibitem{prs_gafa}  S. Pigola, M. Rigoli \and A.G. Setti, `Volume growth, "a priori'' estimates, and geometric applications', {\em Geom. Funct. Anal. } 13 (2003), no. 6, 1302--1328.
 
\bibitem{PRS1}  S. Pigola, M. Rigoli \and A.G. Setti, `Maximum principles on Riemannian manifolds and applications', {\em Mem. Amer. Math. Soc. } 174 (2005), no. 822.
 
\bibitem{PRS2}  S. Pigola, M. Rigoli \and A.G.  Setti, `Vanishing and finiteness results in geometric analysis. A generalization of the Bochner technique', Progress in Mathematics 266. Birkh\"auser Verlag, Basel, 2008.

\bibitem{rigolisetti}  M. Rigoli \and A.G. Setti, `Liouville type theorems for $\varphi$-subharmonic functions', {\em Rev. Mat. Iberoamericana } 17 (2001), no. 3, 471--520.

\bibitem{sha}  J. Sha, `$p$-convexity of manifolds with boundary', Ph.D. Dissertation, SUNY at Stony Brook, 1986.

\bibitem{schatzle}  R. Sch\"atzle, `Quadratic tilt-excess decay and strong maximum principle for varifolds', {\em Ann. Sc. Norm. Super. Pisa Cl. Sci. } (5) 3 (2004), no. 1, 171--231.
 
\bibitem{shen}  Z. Shen, `On Complete Manifolds of Nonnegative kth-Ricci Curvature', {\em Trans. Amer. Math. Soc. } 338 (1993), no. 1, 289--310. 

\bibitem{simon-book}  L. Simon, `Lectures on geometric measure theory', Proc. Centre
Math. Analysis, Austral. Nat. Univ., 3, Austral. Nat. Univ., Canberra, 1983.

\bibitem{simon}  L. Simon, `A strict maximum principle for area minimizing hypersurfaces', {\em J. Differential Geom. } 26 (1987), 327--335.

\bibitem{solowhi}  B. Solomon \and B. White, `A Strong Maximum Principle for Varifolds that are Stationary with Respect to Even Parametric Elliptic Functionals', {\em Indiana Univ. Math. J. } 38 (1989), 683--691.

\bibitem{White}  B. White, `The maximum principle for minimal varieties of arbitrary codimension',  {\em Comm. Anal. Geom. } 18 (2010), no. 3, 421--432.

\bibitem{white_2}  B. White, `Controlling area blow-up in minimal or bounded mean curvature varieties', {\em J. Differential Geom. } 102 (2016), no. 3, 501--535.

\bibitem{wick}  N. Wickramasekera, `A sharp strong maximum principle and a sharp unique continuation theorem for singular minimal hypersurfaces', {\em Calc. Var. Partial Differential Equations } 51 (2014), no. 3-4, 799--812.
 
\bibitem{Winklmann}  S. Winklmann, `Enclosure theorems for generalized mean curvature flows', {\em Calc. Var. Partial Differential Equations } 16 (2003), no. 4, 439--447.

\bibitem{Wu-complex}  H. Wu, `On certain Kahler manifolds which are q-complete', {\em Proc. Symp. Pure Math.
Amer. Math. Soc. } 41 (1984), 253--276.

\bibitem{Wu}  H. Wu, `Manifolds of partially positive curvature', {\em Indiana Univ. Math. J. } 36 (1987), 525--548.


\end{thebibliography}
\end{document}